\newtheorem{theorem}{Theorem}[section]
\newtheorem*{theorem*}{Theorem}
\newtheorem{cor}[theorem]{Corollary}
\newtheorem{lemma}[theorem]{Lemma}
\newtheorem{prop}[theorem]{Proposition}
\newtheorem{que}[theorem]{Question}
\theoremstyle{definition}
\newtheorem{defin}[theorem]{Definition}
\newtheorem{fact}[theorem]{Fact}
\theoremstyle{remark}
\newtheorem*{rem}{Remark}
\newtheorem{exa}[theorem]{Example}
\newcommand{\flim}[1]{\mathrm{Flim}(#1)}
\newcommand{\age}[1]{\mathrm{Age}(#1)}
\newcommand{\fr}{Fra\"iss\'e }
\renewcommand{\phi}{\varphi}
\newcommand{\emb}[1]{\mathrm{Emb}(#1)}
\newcommand{\aut}[1]{\mathrm{Aut}(#1)}
\newcommand{\im}[1]{\mathrm{Im}(#1)}
\newcommand{\hatf}{\,\hat{\rule{-0.5ex}{1.5ex}\smash{f}}}
\newcommand{\tildef}{\,\tilde{\rule{-0.5ex}{1.5ex}\smash{f}}}
\newcommand{\tildei}[1]{\tilde{\imath}_{#1}}
\newcommand{\tp}[1]{\mathrm{tp}(#1)}
\begin{document}
\title{Big Ramsey degrees and topological dynamics}
\author{Andy Zucker}
\maketitle

\begin{abstract}
We consider \fr structures whose objects have finite big Ramsey degree and ask what consequences this has for the dynamics of the automorphism group. Motivated by a theorem of D.\ Devlin about the partition properties of the rationals, we define the notion of a \emph{big Ramsey structure}, a single structure which codes the big Ramsey degrees of a given \fr structure. This in turn leads to the definition of a \emph{completion flow}; we show that if a \fr structure admits a big Ramsey structure, then the automorphism group admits a unique universal completion flow. We also discuss the problem of when big Ramsey structures exist and explore connections to the notion of oscillation stability defined by Kechris, Pestov, and Todor\v{c}evi\'c \cite{KPT}.
\let\thefootnote\relax\footnote{2010 Mathematics Subject Classification. Primary: 22F50; Secondary: 03C15, 03E02, 03E75, 05D10, 37B20, 54D80, 54H20.}
\let\thefootnote\relax\footnote{Key words and phrases. Fra\"iss\'e theory, Ramsey theory, topological dynamics, topological semigroups.}
\let\thefootnote\relax\footnote{The author was partially supported by NSF Grant no.\ DGE 1252522.}
\end{abstract}

\section{Introduction, main definitions, and statements of theorems}

Consider the statement of Ramsey's theorem: for every $k, r < \omega$, we have
\begin{align*}
\omega\rightarrow (\omega)^k_r
\end{align*}
This ``arrow notation'' is shorthand for the following statement: for any coloring $\gamma: [\omega]^k\rightarrow r$, there is an infinite $S\subseteq \omega$ so that $|\gamma``([S]^k)| = 1$. We often say that $S$ is \emph{monochromatic} for $\gamma$. Ramsey's theorem can be generalized in many directions. Erd\H{o}s and Rado \cite{ErdRad} considered coloring finite tuples from larger cardinals while demanding larger monochromatic sets. Galvin and Prikry \cite{GalPr}, and later Ellentuck \cite{Ell}, considered suitably definable colorings of the \emph{infinite} subsets of $\omega$.

The generalization that we will consider in this paper is that of \emph{structural} Ramsey theory. As a warmup, we will consider the structure $\langle \mathbb{Q}, \leq\rangle$ of rationals and their linear order. In what follows we will just write $\mathbb{Q}$. Let us call a subset $S\subseteq \mathbb{Q}$ a \emph{dense linear order}, or DLO for short, if the ordering on $S$ inherited from $\mathbb{Q}$ is dense and contains no maximum or minimum. Note that we do \emph{not} require that $S$ is dense in $\mathbb{Q}$, nor do we require that $S$ is unbounded in $\mathbb{Q}$. Let us write
\begin{align*}
\mathbb{Q}\rightarrow (\mathbb{Q})^k_r
\end{align*}
for the statement that for any coloring $\gamma: [\mathbb{Q}]^k\rightarrow r$, there is a DLO $S\subseteq \mathbb{Q}$ which is monochromatic for $\gamma$. While this is a natural sounding generalization of Ramsey's theorem, it has the downside of being false. Consider a coloring $\gamma: [\mathbb{Q}]^2\rightarrow 2$ produced as follows: fix an enumeration $\mathbb{Q} = \{q_n: n< \omega\}$, which we can consider as a new \emph{enumeration order} on top of the standard \emph{rational order}. Now given a pair $\{q, r\}$ of rationals, we can ask whether the enumeration order on $q$ and $r$ agrees or disagrees with the rational order, and we set $\gamma(\{q, r\})$ accordingly. It is not hard to see that no DLO subset $S\subseteq \mathbb{Q}$ is monochromatic for $\gamma$, and therefore $\mathbb{Q}\not\rightarrow (\mathbb{Q})^2_2$.

Remarkably, this counterexample is in a strong sense the worst possible. Galvin \cite{Gal} proved that for every $r < \omega$, the following statement holds:
\begin{align*}
\mathbb{Q}\rightarrow (\mathbb{Q})^2_{r, 2}
\end{align*}
This means that for every coloring $\gamma: [\mathbb{Q}]^2\rightarrow r$, there is a DLO subset $S\subseteq \mathbb{Q}$ so that $|\gamma``([S]^2)| = 2$; we call such an $S$ \emph{$2$-chromatic} for $\gamma$. The proof of this statement uses a Ramsey theorem for trees proved by Milliken \cite{Mil}, which in turn is a generalization of the Halpern-L\"auchli theorem. Denis Devlin \cite{Dev} in his thesis pushed Galvin's theorem as far as possible. For each $k< \omega$, he found the exact number $T_k$ so that for any number of colors $r < \omega$, we have 
\begin{align*}
\mathbb{Q}\rightarrow (\mathbb{Q})^k_{r, T_k} \text{ and } \mathbb{Q}\not\rightarrow (\mathbb{Q})^k_{T_k, T_{k}-1}
\end{align*}
So for any coloring $\gamma: [\mathbb{Q}]^k\rightarrow r$, there is some DLO $S\subseteq \mathbb{Q}$ which is $T_k$-chromatic for $\gamma$, but there is a coloring $\delta: [\mathbb{Q}]^k\rightarrow T_k$ so that each color class is \emph{unavoidable}, i.e.\ each color class meets every DLO subset of $\mathbb{Q}$. We say that $T_k$ is the \emph{big Ramsey degree} of any $k$-element substructure of $\mathbb{Q}$. The sequence $T_k$ is given by the \emph{odd tangent numbers}, the sequence of numbers which describes the Taylor series for the tangent function.
\vspace{3 mm}

Let us consider $k = 3$. The strategy we used to construct an unavoidable $2$-coloring of $[\mathbb{Q}]^2$ also works to produce an unavoidable $6$-coloring of $[\mathbb{Q}]^3$. However, it turns out that $T_3 = 16$. We need to work a bit harder to produce an unavoidable $16$-coloring, but the strategy is similar; we introduce some extra relational symbols $\vec{R}$ on top of $\mathbb{Q}$ and then color a triple of rationals based on the expanded structure it receives. The structure $\langle \mathbb{Q}, <, \vec{R}\rangle$ not only works to produce an unavoidable $16$-coloring of $[\mathbb{Q}]^3$, but enjoys a much stronger property. For any $k\geq 2$, let $\gamma_k$ be the map sending a $k$-tuple to the structure inherited from $\langle \mathbb{Q}, <, \vec{R}\rangle$. Then $\gamma_k$ is an unavoidable $T_k$-coloring. We call $\langle \mathbb{Q}, <, \vec{R}\rangle$ a \emph{big Ramsey structure} for $\mathbb{Q}$; these will be the central objects of study in this paper.
\vspace{3 mm}

In order to state the formal definitions in full generality, we need some background on first-order structures. A \emph{relational language} $L = \{R_i: i\in I\}$ is a set of relational symbols; each symbol $R_i$ comes with a finite arity $n_i$. All languages in this paper will be relational. Given a language $L$, an \emph{$L$-structure} $\mathbf{A} = \langle A, R_i^\mathbf{A}\rangle$ is a set $A$ along with an interpretation $R_i^\mathbf{A}\subseteq A^{n_i}$ of each symbol in $L$. We will use boldface for structures and lightface for the underlying set unless otherwise specified. If $\mathbf{A}$ and $\mathbf{B}$ are $L$-structures, an \emph{embedding} $f: \mathbf{A}\rightarrow \mathbf{B}$ is any injective map $f: A\rightarrow B$ so that for each $i\in I$ and each $n_i$-tuple $a_0,...,a_{n_i-1}\in A$, we have 
\begin{align*}
R_i^\mathbf{A}(a_0,...,a_{n_i-1})\Leftrightarrow R_i^\mathbf{B}(f(a_0),...,f(a_{n_i-1})).
\end{align*}
Write $\emb{\mathbf{A}, \mathbf{B}}$ for the set of embeddings from $\mathbf{A}$ to $\mathbf{B}$, and write $\mathbf{A}\leq \mathbf{B}$ if $\emb{\mathbf{A}, \mathbf{B}}\neq \emptyset$. If $A\subseteq B$, then we write $\mathbf{A}\subseteq \mathbf{B}$ if the inclusion map is an embedding. An \emph{isomorphism} is a bijective embedding, and an \emph{automorphism} is an isomorphism from a structure to itself. We write $\emb{\mathbf{A}}$ for $\emb{\mathbf{A}, \mathbf{A}}$, and we write $\aut{\mathbf{A}}$ for the group of automorphisms of $\mathbf{A}$. A structure is \emph{finite} or \emph{countable} if the underlying set is, and we write $|\mathbf{A}|:= |A|$.

If $\mathbf{K}$ is a countable $L$-structure, we write $\age{\mathbf{K}} := \{\mathbf{A}\leq \mathbf{K}: \mathbf{A} \text{ is finite}\}$. A countable structure $\mathbf{K}$ is called a \emph{\fr structure} if for any finite $\mathbf{A}\subseteq \mathbf{K}$ and embedding $f:\mathbf{A}\rightarrow \mathbf{K}$, there is $g\in \aut{\mathbf{K}}$ with $g|_{\mathbf{A}} = f$. Two facts are worth pointing out, both due to \fr \cite{Fr}. First, if $\mathbf{K}$ is a \fr structure, then $\mathcal{K}:= \age{\mathbf{K}}$ is a \emph{\fr class}; this is any class of $L$-structures with the following four properties.
\begin{enumerate}
\item
$\mathcal{K}$ contains only finite structures, contains structures of arbitrarily large finite cardinality, and is closed under isomorphism.
\item
$\mathcal{K}$ has the \emph{Hereditary Property} (HP): if $\mathbf{B}\in \mathcal{K}$ and $\mathbf{A}\subseteq \mathbf{B}$, then $\mathbf{A}\in \mathcal{K}$.
\item
$\mathcal{K}$ has the \emph{Joint Embedding Property} (JEP): if $\mathbf{A}, \mathbf{B}\in \mathcal{K}$, then there is $\mathbf{C}\in \mathcal{K}$ which embeds both $\mathbf{A}$ and $\mathbf{B}$.
\item
$\mathcal{K}$ has the \emph{Amalgamation Property} (AP): if $\mathbf{A}, \mathbf{B}, \mathbf{C}\in \mathcal{K}$ and $f: \mathbf{A}\rightarrow \mathbf{B}$ and $g: \mathbf{A}\rightarrow \mathbf{C}$ are embeddings, there is $\mathbf{D}\in \mathcal{K}$ and embeddings $r: \mathbf{B}\rightarrow \mathbf{D}$ and $s:\mathbf{C}\rightarrow\mathbf{D}$ with $r\circ f = s\circ g$.
\end{enumerate}
Second, if $\mathcal{K}$ is a \fr class, there is up to isomorphism a unique \fr structure $\mathbf{K}$ with $\age{\mathbf{K}} = \mathcal{K}$. We call $\mathbf{K}$ the \emph{\fr limit} of $\mathcal{K}$ and write $\mathbf{K} = \flim{\mathcal{K}}$.
\vspace{3 mm}

\begin{defin}\mbox{}
\label{BigRamsey}
\begin{enumerate}
\item
Let $\mathbf{K}$ be a \fr structure with $\mathcal{K} = \age{\mathbf{K}}$. Let $\mathbf{A}\in \mathcal{K}$, and let $r, \ell < \omega$. The statement
\begin{align*}
\mathbf{K}\rightarrow (\mathbf{K})^\mathbf{A}_{r, \ell}
\end{align*}
says that for any coloring $\gamma: \emb{\mathbf{A}, \mathbf{K}}\rightarrow r$, there is $\eta\in \emb{\mathbf{K}}$ with \newline $|\{\gamma(\eta\circ f): f\in \emb{\mathbf{A}, \mathbf{K}}\}|\leq \ell$.
\item
With notation as above, we say that $\mathbf{A}$ has \emph{big Ramsey degree} $\ell< \omega$ if $\ell$ is least so that for every $r > \ell$, we have $\mathbf{K}\rightarrow (\mathbf{K})^\mathbf{A}_{r, \ell}$.
\item
We say that $\mathbf{A}$ has \emph{finite big Ramsey degree} if $\mathbf{A}$ has big Ramsey degree $\ell$ for some $\ell  <\omega$.
\end{enumerate}
\end{defin}
For example, we saw earlier that if $\mathcal{K}$ is the class of finite linear orders and $\mathbf{K} = \mathbb{Q}$, then the big Ramsey degree of the $k$-element linear order is the $k$-th odd tangent number. We also built a ``big Ramsey structure'' which somehow captured the correct big Ramsey degree for every finite substructure of $\mathbb{Q}$. Definition \ref{BRStructure} is one of the central definitions of this paper.
\vspace{3 mm}

\begin{defin}
Let $A$ be a set, and let $\mathbf{B}$ be an $L$-structure. Let $f: A\rightarrow B$ be injective. Then $\mathbf{B}\!\cdot \!f$ is the unique $L$-structure with underlying set $A$ so that $f: \mathbf{B}\!\cdot\! f\rightarrow \mathbf{B}$ is an embedding.
\end{defin}
\vspace{0 mm}

\begin{defin}
\label{BRStructure}
Let $\mathbf{K}$ be a \fr $L$-structure with $\mathcal{K} = \age{\mathbf{K}}$. We say that $\mathbf{K}$ \emph{admits a big Ramsey structure} if there is a language $L'\supseteq L$ and an $L'$-structure $\mathbf{K}'$ so that the following all hold.
\begin{enumerate}
\item
$\mathbf{K}'|_L = \mathbf{K}$.
\item
Every $\mathbf{A}\in \mathcal{K}$ has finitely many $L'$-expansions to a structure $\mathbf{A}'\in \age{\mathbf{K}'}$; denote the set of expansions by $\mathbf{K}'(\mathbf{A})$.
\item
Every $\mathbf{A}\in \mathcal{K}$ has big Ramsey degree $|\mathbf{K}'(\mathbf{A})|$.
\item
The function $\gamma: \emb{\mathbf{A}, \mathbf{K}}\rightarrow \mathbf{K}'(\mathbf{A})$ given by $\gamma(f) = \mathbf{K}'\cdot f$ witnesses the fact that the big Ramsey degree of $\mathbf{A}$ is not less than $|\mathbf{K}'(\mathbf{A})|$.
\end{enumerate}
Call a structure $\mathbf{K}'$ satisfying (1)-(4) a \emph{big Ramsey structure} for $\mathbf{K}$.
\end{defin}
\vspace{3 mm}

We will see that big Ramsey structures give rise to various new dynamical objects. In particular, we will define the notion of a \emph{completion flow} and show that big Ramsey structures imply the existence of a \emph{universal} completion flow. In order to define these objects, we need some background in topological dynamics.
\vspace{3 mm}

Let $G$ be a Hausdorff topological group. A (right) \emph{$G$-flow} is a compact Hausdorff space $X$ equipped with a continuous right action $a: X\times G\rightarrow X$. When the action $a$ is understood, we will write $x\cdot g$ or $xg$ instead of $a(x, g)$. A $G$-ambit is a pair $(X, x_0)$ where $X$ is a $G$-flow and $x_0\in X$ is a point with dense orbit. A \emph{pre-ambit} is any $G$-flow containing a point with dense orbit. In the literature, these are often called \emph{point transitive}, but the name ``pre-ambit'' will be more suggestive going forward.

If $X$ and $Y$ are $G$-flows, a \emph{$G$-map} or \emph{map of flows} is a continuous map $\phi: X\rightarrow Y$ so that for any $x\in X$ and $g\in G$, we have $\phi(xg) = \phi(x)g$. If $(X, x_0)$ and $(Y, y_0)$ are ambits, then a \emph{map of ambits} is a $G$-map $\phi: X\rightarrow Y$ so that $\phi(x_0) = y_0$. While there may be many $G$-maps from $X$ to $Y$, there is at most one map of ambits from $(X, x_0)$ to $(Y, y_0)$. Also notice that any map of ambits is surjective.

 Each topological group comes equipped with several compatible uniform structures, including the \emph{left uniformity}. A typical member of the left uniformity is a set of the form $\{(g, h)\in G\times G: g^{-1}h\in V\}$, where $V$ is an open symmetric neighborhood of the identity. This will be the only uniform structure we place on $G$ for now. A net $\{g_i: i\in I\}$ from $G$ is called \emph{Cauchy} if for every symmetric open $V\subseteq G$ containing the identity, there is $i_0\in I$ so that for every $i, j > i_0$, we have $g_i^{-1}g_j\in V$. Just like metric spaces, every uniform space admits a completion. We let $\widehat{G}$ denote the left completion of $G$; $\widehat{G}$ then enjoys the structure of a topological semigroup. Now if $X$ is a $G$-flow, there is a unique extension of the $G$-action to $\widehat{G}$ making $X\times \widehat{G}\rightarrow X$ continuous. 
\vspace{3 mm}

\begin{defin}\mbox{}
\label{CompletionAmbit}
\begin{enumerate}
\item
Let $X$ be a $G$-flow. A point $x\in X$ is called a \emph{completion point} if for every $\eta\in \widehat{G}$, $x\cdot \eta$ has dense orbit. The flow $X$ is called a \emph{completion flow} if $X$ contains a completion point; the ambit $(X, x_0)$ is called a \emph{completion ambit} if $x_0$ is a completion point of $X$.

\item
A completion flow $X$ is called \emph{universal} if for any other completion flow $Y$, there is a surjective $G$-map $\phi: X\rightarrow Y$.
\end{enumerate}
\end{defin}
\vspace{3 mm}

If $\mathbf{K} = \flim{\mathcal{K}}$ is a \fr structure, then $G = \aut{\mathbf{K}}$ becomes a topological group when endowed with the topology of pointwise convergence. A typical open neighborhood of the identity is a set of the form $\{g\in G: \forall a\in A (g(a) = a)\}$, where $A\subseteq K$ is some finite subset. The left completion $\widehat{G}$ is the semigroup $\emb{\mathbf{K}}$, which we also endow with the topology of pointwise convergence. 

For $G = \aut{\mathbf{K}}$, there are two types of $G$-flows we will often consider. The first is a space of colorings. If $\mathbf{A}\in \mathcal{K}$, then $\widehat{G}$ acts on the set $\emb{\mathbf{A}, \mathbf{K}}$ on the left by composition. Now let $r < \omega$, and form the space $X = r^{\emb{\mathbf{A}, \mathbf{K}}}$ of $r$-colorings of $\emb{\mathbf{A}, \mathbf{K}}$ endowed with the product topology. If $\gamma \in X$ and $\eta\in \widehat{G}$, then we define $\gamma\cdot \eta$ by setting $\gamma\cdot \eta(f) = \gamma(\eta\cdot f)$. 

The second type of $G$-flow is a space of structures. Let $L$ be a language (not necessarily the same language that $\mathbf{K}$ is in). Let $X$ the space of $L$-structures on the set $K$. We give $X$ the \emph{logic topology}, where a typical open set of structures is of the form $\{x\in X: x\cdot f = \mathbf{A}\}$, where $\mathbf{A}$ is some finite $L$-structure and $f: A\rightarrow K$ is some injection. If $x\in X$ and $\eta\in \widehat{G}$, we define $x\cdot \eta$ as follows. Let $R\in L$ be an $n$-ary relation symbol. If $a_0,...,a_{n-1}\in K$, then we set $R^{x\cdot \eta}(a_0,...,a_{n-1})$ iff $R^x(\eta(a_0),...,\eta(a_{n-1}))$ holds. In this level of generality, $X$ may not be compact, but the various subspaces of $X$ we discuss will always be compact. When speaking about structures in a space of structures, we will often break with our notational convention of using boldface. 
\vspace{3 mm}

\begin{defin}
\label{BRAmbit}
Let $\mathbf{K}$ be a \fr $L$-structure, and let $\mathbf{K}'$ be a big Ramsey structure for $\mathbf{K}$ in a language $L'\supseteq L$. Form the space $X$ of $L'$-structures on $K$, and let $X_{\mathbf{K}'} = \overline{\mathbf{K}'\cdot G}$ be the orbit closure of $\mathbf{K}'\in X$. We call any ambit $(Z, z_0)$ isomorphic to $(X_{\mathbf{K}'}, \mathbf{K}')$ a \emph{big Ramsey ambit}, and we call the underlying flow $Z$ a \emph{big Ramsey flow}.
\end{defin}
\vspace{3 mm}

If $\mathbf{K}'$ is a big Ramsey structure and $X_{\mathbf{K}'} = \overline{\mathbf{K}'\cdot G}$, then $\mathbf{K}'$ is a completion point of $X_{\mathbf{K}'}$ (see Proposition \ref{BRCompletion}), so every big Ramsey flow is a completion flow.

Before stating the main theorems of this paper, a discussion of the history and motivation is in order. A very fruitful direction of research for the past 15 years has been the interaction between the combinatorics of \fr structures and the dynamical properties of their automorphism groups. One of the first efforts in this direction was due to Pestov \cite{Pe}, where he proved that the group $\aut{\mathbb{Q}}$ is \emph{extremely amenable}, meaning that every flow admits a fixed point. His proof makes crucial use of the finite version of Ramsey's theorem. Kechris, Pestov, and Todor\v{c}evi\'c \cite{KPT} then showed that in a very strong sense, the fact that $\aut{\mathbb{Q}}$ is extremely amenable is actually equivalent to the finite Ramsey theorem. If $\mathbf{K} = \flim{\mathcal{K}}$ is a \fr structure, $\mathbf{A}\leq \mathbf{B}\in \mathcal{K}$, and $\ell < r <\omega$, write
\begin{align}
\label{SmallRamsey}
\mathbf{K}\rightarrow (\mathbf{B})^\mathbf{A}_{r, \ell}
\end{align}
for the statement that for every coloring $\gamma: \emb{\mathbf{A}, \mathbf{K}}\rightarrow r$, there is $s\in \emb{\mathbf{B}, \mathbf{K}}$ so that $|\{\gamma(s\circ f): f\in \emb{\mathbf{A}, \mathbf{B}}\}|\leq \ell$. We say that $\mathbf{A}$ has \emph{small Ramsey degree} $\ell < \omega$ if $\ell$ is least so that for every $\mathbf{B}\in \mathcal{K}$ with $\mathbf{A}\leq \mathbf{B}$ and every $r > \ell$, the statement (\ref{SmallRamsey}) holds. We say that $\mathbf{A}$ is a \emph{Ramsey object} if $\mathbf{A}$ has small Ramsey degree $\ell = 1$. The first major theorem in \cite{KPT} states that $G = \aut{\mathbf{K}}$ is extremely amenable iff every $\mathbf{A}\in \mathcal{K}$ is a Ramsey object. 

If $G$ is a topological group, a $G$-flow $X$ is called \emph{minimal} if every orbit is dense. A minimal flow is called \emph{universal} if it admits a $G$-map to any other minimal flow. Every topological group admits a \emph{universal minimal flow}, or UMF,  which is unique up to isomorphism. The second major result of \cite{KPT}, which was generalized to the form given here in \cite{NVT0}, provided a construction of the UMF of $G =\aut{\mathbf{K}}$ in several cases. Very roughly, if $\mathbf{K}$ is a \fr $L$-structure, this construction proceeds by exhibiting a \fr expansion $\mathbf{K}'$ in some languange $L'\supseteq L$ satisfying several conditions. If this can be done, then form the space $X$ of $L'$-structures on $K$, and let $Y = \overline{\mathbf{K}'\cdot G}$ be the orbit closure. Then $Y$ is the universal minimal flow of $G$. One feature of this construction is that if it can be done, then each $\mathbf{A}\in \mathcal{K}$ must have finite small Ramsey degree.

The current author in \cite{Z} then showed that the construction of universal minimal flows in \cite{KPT} and \cite{NVT0} is in a sense the only one possible and characterized exactly when it could be carried out. It turns out that if $\mathbf{K} = \flim{\mathcal{K}}$ is a \fr structure and each $\mathbf{A}\in \mathcal{K}$ has finite small Ramsey degree, then the construction from \cite{KPT} and \cite{NVT0} is possible. Furthermore, this characterizes exactly when $G = \aut{\mathbf{K}}$ has metrizable UMF. 

The main goal of this paper is to attempt to carry out a similar analysis in regards to \emph{big} Ramsey degree. If $\mathbf{K} = \flim{\mathcal{K}}$ and $\mathcal{K}$ contains all objects of finite big Ramsey degree, does this correspond to $\aut{\mathbf{K}}$ having a ``nicely described'' metrizable universal object in some category of dynamical systems? And can this universal object be described as a space of structures? 

Our main theorem is the following.
\vspace{3 mm}

\begin{theorem}
\label{BRUCA}
Let $\mathbf{K}$ be a \fr structure which admits a big Ramsey structure, and let $G = \aut{\mathbf{K}}$. Then any big Ramsey flow is a universal completion flow, and any two universal completion flows are isomorphic.
\end{theorem}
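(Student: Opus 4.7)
The statement splits into two parts: (i) any big Ramsey flow $X_{\mathbf{K}'} = \overline{\mathbf{K}' \cdot G}$ is universal among completion flows, and (ii) universal completion flows are unique up to isomorphism. That $X_{\mathbf{K}'}$ is itself a completion flow with completion point $\mathbf{K}'$ is provided by Proposition \ref{BRCompletion}, so that piece is already in hand.

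For (i), fix a completion ambit $(Y, y_0)$ and form the diagonal orbit closure
\[
Z \;:=\; \overline{(\mathbf{K}', y_0) \cdot G} \;\subseteq\; X_{\mathbf{K}'} \times Y.
\]
The coordinate projection $p_X : Z \to X_{\mathbf{K}'}$ is a $G$-equivariant surjection, since the $G$-orbit of $\mathbf{K}'$ is dense in $X_{\mathbf{K}'}$. If $p_X$ is a bijection, then $\phi := p_Y \circ p_X^{-1}$ is the desired surjective $G$-map $X_{\mathbf{K}'} \to Y$. By $G$-equivariance of $p_X$, bijectivity reduces to the single claim $p_X^{-1}(\mathbf{K}') = \{(\mathbf{K}', y_0)\}$: equivalently, any net $(g_i) \subseteq G$ with $\mathbf{K}' \cdot g_i \to \mathbf{K}'$ in $X_{\mathbf{K}'}$ must also satisfy $y_0 \cdot g_i \to y_0$ in $Y$.

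I would prove this convergence by contradiction: if a subnet achieves $y_0 \cdot g_i \to y \neq y_0$, pick disjoint clopen neighborhoods $U_0 \ni y_0$ and $U_1 \ni y$ in $Y$, and for a sufficiently large finite $\mathbf{A} \subseteq \mathbf{K}$ define a coloring $c : \emb{\mathbf{A}, \mathbf{K}} \to 2^{\{0,1\}}$ recording which of $U_0, U_1$ can contain $y_0 \cdot \tilde f$ as $\tilde f \in \widehat G$ ranges over extensions of $f$ to an embedding of $\mathbf{K}$. The big Ramsey degree of $\mathbf{A}$ yields $\eta \in \widehat G$ such that $c \circ (\eta \circ \cdot)$ takes at most $|\mathbf{K}'(\mathbf{A})|$ values; pairing $c$ against the canonical coloring $\gamma(f) = \mathbf{K}' \cdot f$ in a product coloring and invoking the unavoidability clause (4) of Definition \ref{BRStructure} then forces $c \circ (\eta \circ \cdot)$ to \emph{factor through} $\gamma$. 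Thus after substitution by $\eta$, $U_0$-versus-$U_1$ membership is determined entirely by the $L'$-isomorphism type of $\mathbf{K}' \cdot (\eta \circ f)$. Combined with the completion-point property (which guarantees $\mathbf{K}' \cdot \eta$ has dense $G$-orbit, so that cylinder sets near $\mathbf{K}'$ remain visible to the $(g_i)$-net after composing with $\eta$) and the hypothesis $\mathbf{K}' \cdot g_i \to \mathbf{K}'$, this should produce the contradiction. The main obstacle is making $c$ well-defined --- the value $y_0 \cdot \tilde f$ depends a priori on the choice of extension $\tilde f$ --- and transferring the result from ``after $\eta$-substitution'' back to the original net $(g_i)$; both steps hinge on a careful use of the completion-point property of $y_0$ together with continuity of the extended $\widehat G$-action on $Y$.

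For (ii), once (i) is established, $X_{\mathbf{K}'}$ is itself a universal completion flow. Given any other universal completion flow $X$, universality provides surjective $G$-maps $\phi : X_{\mathbf{K}'} \to X$ and $\psi : X \to X_{\mathbf{K}'}$. A direct calculation shows that continuous $G$-equivariant maps send completion points to completion points --- since $\phi(x \cdot \eta) = \phi(x) \cdot \eta$ has dense orbit whenever $x \cdot \eta$ does --- so $\psi \circ \phi$ is a surjective $G$-self-map of $X_{\mathbf{K}'}$ sending $\mathbf{K}'$ to a completion point. A standard argument, analogous to the uniqueness of universal minimal flows, then forces $\psi \circ \phi$ and $\phi \circ \psi$ to be homeomorphisms, giving $X \cong X_{\mathbf{K}'}$.
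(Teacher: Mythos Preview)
Your joining approach in (i) has a genuine gap at the reduction step. The claim that $p_X^{-1}(\mathbf{K}') = \{(\mathbf{K}', y_0)\}$ implies $p_X$ is bijective does \emph{not} follow from $G$-equivariance: for a general ambit $(X,x_0)$, the fiber $S_{x_0}$ of the map $S(G)\to X$ over the basepoint does not determine the other fibers, so a single-fiber condition on a joining need not propagate. What would make the reduction valid here is precisely Proposition~\ref{BRLift}: the big Ramsey ambit is isomorphic to its own lift $(uS(G),u)$ for an idempotent $u\in S_{\mathbf{K}'}$, after which $\mathbf{K}'p=\mathbf{K}'q$ becomes $up=uq$, and then $y_0p=y_0up=y_0uq=y_0q$ follows once $y_0u=y_0$. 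But proving Proposition~\ref{BRLift} is already most of the work (it is where the finite big Ramsey degrees enter, via Lemma~\ref{CompletionLift}), and you have not invoked it.

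There is a second, more structural issue. Your net statement is equivalent to $S_{\mathbf{K}'}\subseteq S_{y_0}$, which would yield a map of \emph{ambits} $(X_{\mathbf{K}'},\mathbf{K}')\to (Y,y_0)$. The paper does not prove this, and its construction suggests one cannot in general: the map produced is $\lambda_{w_0p}$ for a specific $p\in S(G)$, sending $\mathbf{K}'$ to $w_0p$ rather than to $w_0$. The shift by $p$ is essential and comes from Proposition~\ref{ColorRefine} (aligning the colorings $\gamma_m$ and $\delta_m$ after passing to some $\eta_n\in\widehat{G}$) together with Proposition~\ref{RefineUnavoidable} (which preserves the unavoidable $|W_m|$-coloring structure in the limit). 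Your coloring sketch gestures at the first of these but has no analog of the second, and the obstacles you flag (well-definedness of $c$, transferring back from the $\eta$-shifted picture) are real and unresolved. Finally, your ``standard argument'' in (ii) presupposes that every surjective $G$-self-map of $X_{\mathbf{K}'}$ is an isomorphism; this is Proposition~\ref{EndoIso}, whose proof again requires the lift identification and metrizability (each level $Y_m$ finite), neither of which you mention.
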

\vspace{3 mm}

The proof of Theorem \ref{BRUCA} introduces new techniques in abstract topological dynamics which seem interesting in their own right. In particular, we will define the notion of \emph{strong maps} between pre-ambits and see that the category of pre-ambits and strong maps enjoys a rich structure. However, there is still much we do not know about completion flows. In particular, the following fundamental question remains open: if $G$ is a topological group, does $G$ admit a unique universal completion flow?

Theorem \ref{BRUCA} makes it important to know when a \fr structure $\mathbf{K}$ admits a big Ramsey structure. An obvious necessary condition is that every $\mathbf{A}\in \mathcal{K}$ have finite big Ramsey degree. Whether or not this is sufficient seems to be a difficult question; we will discuss the key difficulties in section \ref{SetsAndColorings}. Rather strangely, when $G = \aut{\mathbf{K}}$ is \emph{Roelcke precompact}, we can say quite a bit about what the big Ramsey ambit should look like if it exists. In particular, we can describe exactly what the age of any big Ramsey structure should be, and we can show that any big Ramsey flow must contain a generic orbit. 

The possible gap between having finite big Ramsey degrees and admitting a big Ramsey structure also suggests a useful weakening of the notion of \emph{oscillation stability} defined in \cite{KPT}. We will precisely define this notion later, but if $\mathbf{K} = \flim{\mathcal{K}}$ and $G = \aut{\mathbf{K}}$, then $G$ is oscillation stabile iff each object in $\mathcal{K}$ has big Ramsey degree one. By a deep theorem of Hjorth \cite{Hj}, this notion is vacuous for Polish groups; no Polish group can be oscillation stable. 

The weakening we propose is as follows; call a topological group $G$ \emph{completely amenable} if $G$ admits no non-trivial completion flows. So any completely amenable group is also extremely amenable. The example of $G = \aut{\mathbb{Q}}$ shows that the converse does not hold. We will see that if a topological group $G$ is oscillation stable, then $G$ is completely amenable. However, the converse is not known. The difficulty is similar in nature to the difficulty in showing that for a structure $\mathbf{K} = \flim{\mathcal{K}}$, having finite big Ramsey degrees implies that $\mathbf{K}$ admits a big Ramsey structure. Therefore the question of whether any Polish group is completely amenable has content.
\vspace{3 mm}

This paper is organized as follows. Section \ref{BackgroundSection} contains background on compact left-topological semigroups, Samuel compactifications, and automorphism groups of \fr structures. The construction of the Samuel compactification of $\aut{\mathbf{K}}$ given in section \ref{FraisseSG} is essential for the rest of the paper. Section \ref{LiftsSection} defines the \emph{lift} of an ambit and shows that this lift only depends on the underlying flow. Section \ref{SetsAndColorings} contains a variety of combinatorial results about unavoidable colorings. Section \ref{ProofSection} uses the ideas of sections \ref{LiftsSection} and \ref{SetsAndColorings} to prove Theorem \ref{BRUCA}. Section \ref{ExampleSection} contains examples of groups with universal completion flows

After section \ref{ExampleSection}, the nature of the paper changes considerably. Section \ref{ExpansionSection} addresses the question of whether or not big Ramsey structures exist. While the question in general remains open, a number of results are proven which describe what the big Ramsey flow must look like should it exist. 

The final section, Section \ref{QuestionSection}, discusses connections to oscillation stability and gathers a list of relevant open questions.

There is also an appendix which contains the proof of Theorem \ref{LevelsBR}, an intuitive result with a cumbersome proof.

\subsection*{Acknowledgements}
I thank Clinton Conley and James Cummings for helpful discussions, and I thank Lionel Nguyen Van Th\'e and Jean Larson for pointing me to several examples. I also thank Stevo Todor\v{c}evi\'c for suggestions on an earlier version of this paper.
\vspace{5 mm}

\section{Background}
\label{BackgroundSection}

\subsection{Compact left-topological semigroups}

An excellent reference on compact left-topological semigroups is the first two chapters of the book by Hindman and Strauss \cite{HS}. Readers should note however the left-right switch between that reference and the presentation here.

Let $S$ be a semigroup. If $x\in S$, let $\lambda_x: S\rightarrow S$ and $\rho_x: S\rightarrow S$ denote the left and right multiplication maps, respectively. A non-empty semigroup $S$ is a \emph{compact left-topolgical semigroup} if $S$ is also a compact Hausdorff space so that for every $x\in S$, the map $\lambda_x$ is continuous. Given $x\in S$ and a subset $T\subseteq S$, we often write $xT := \{xy: y\in T\}$ and $Tx := \{yx: y\in T\}$. A \emph{right ideal} (respectively \emph{left ideal}) of a semigroup $S$ is a subset $M\subseteq S$ so that for every $x\in M$, we have $xS\subseteq M$ (respectively $Sx\subseteq M$). An \emph{idempotent} is any element $x\in S$ with $xx = x$.

We will freely use the following facts throughout the paper. In the following, $S$ denotes a compact left-topological semigroup.
\vspace{3 mm}

\begin{fact}\mbox{}
\label{SemiFacts}
\begin{itemize}
\item
(Ellis-Numakura) $S$ contains an idempotent.

\item
If $u\in S$ is idempotent and $x\in uS$, then $ux = x$. If $x\in Su$, then $xu = x$.

\item
Every right ideal $M\subseteq S$ contains a closed right ideal; namely if $x\in M$, then $xS = \lambda_x(S)$ is closed and a right ideal. Then by Zorn's lemma, every right ideal contains a minimal right ideal which must be closed. Every minimal right ideal is a compact left-topological semigroup, so contains an idempotent.

\item
$S$ contains minimal left ideals. If $M$ is a minimal right ideal and $x\in M$, then $Sx$ is a minimal left ideal. The intersection of any minimal right ideal and any minimal left ideal is a group, hence contains exactly one idempotent.

\item
If $M$ and $N$ are minimal right ideals and $x\in M$, then there is $y\in N$ with $yx\in N$ an idempotent.
\end{itemize}
\end{fact}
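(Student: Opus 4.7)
The plan is to prove the five bullets in the stated order, using standard compactness and Zorn arguments from the theory of compact left-topological semigroups (see \cite{HS}).

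For (1), the Ellis--Numakura lemma, I would apply Zorn to the poset of non-empty closed subsemigroups of $S$ ordered by reverse inclusion; compactness ensures chains have non-empty intersection. Pick a minimal element $T$ and any $x \in T$. Since $xT = \lambda_x(T)$ is a closed subsemigroup of $T$, minimality gives $xT = T$, so some $y \in T$ satisfies $xy = x$. The set $\{t \in T : xt = x\} = T \cap \lambda_x^{-1}(\{x\})$ is then a non-empty closed subsemigroup of $T$, hence equals $T$, giving $x^2 = x$. Bullet (2) is one line: if $x = us$ then $ux = u^2 s = us = x$, and symmetrically for $Su$. For (3), $xS = \lambda_x(S)$ is closed as the continuous image of a compact space and is a right ideal in $M$; Zorn on the closed right ideals inside $M$ (chains intersect non-trivially by compactness) yields a minimal closed right ideal $N$, which is automatically a minimal right ideal since for any $y \in N$ the right ideal $yS$ is closed and contained in $N$. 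Apply (1) inside $N$, viewed as a compact left-topological semigroup in its own right.

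For (4), the wrinkle is that arbitrary left ideals need not be closed, so Zorn cannot be applied directly. The standard workaround is to fix a minimal right ideal $M$ with idempotent $u \in M$ (so $M = uS$ by minimality applied to the closed right ideal $uS \subseteq M$) and verify that $Su$ is a minimal left ideal; closedness of $Su$, which is the nontrivial point, follows from the fact that $Su$ equals the preimage of $u$ under a suitable composition of continuous maps built from $\lambda$. For the group claim, given any minimal right ideal $M$ and minimal left ideal $L$, the intersection $M \cap L$ is non-empty (the product $m\ell$ for $m \in M, \ell \in L$ lies in both) and is a closed subsemigroup; (1) furnishes an idempotent $e \in M \cap L$, which by (2) is a two-sided identity on $M \cap L$, and the minimality of $M$ and $L$ then produces two-sided inverses for every element, which must coincide by associativity, so $M \cap L$ is a group with a unique idempotent. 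Finally, (5) follows from (4): $Sx$ is a minimal left ideal, so $N \cap Sx$ is a group with identity $u = y_0 x$ for some $y_0 \in S$; setting $y := u y_0 \in uS \subseteq N$ gives $yx = u y_0 x = u^2 = u \in N$, an idempotent.

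The main obstacle is (4), where handling left ideals requires circumventing their potential non-closedness via right-ideal data, and where establishing the group structure on $M \cap L$ requires carefully assembling the earlier bullets together with both left- and right-minimality. Once (4) is in hand, (5) is essentially the calculation above.
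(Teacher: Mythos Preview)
The paper does not prove this fact; it is quoted as background from \cite{HS}, so there is no argument in the paper to compare against. Your sketches for bullets (1)--(3) and (5) are standard and correct.

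Bullet (4), however, has a real gap. You assert that $Su$ is closed as ``the preimage of $u$ under a suitable composition of continuous maps built from $\lambda$,'' but you do not produce such maps, and in fact no such description exists: the natural characterization $Su = \{s : su = s\}$ uses $\rho_u$, and minimal left ideals in a compact left-topological semigroup need \emph{not} be closed (dually, minimal right ideals in $\beta\mathbb{N}$ are not closed). You then claim $M \cap L$ is a closed subsemigroup in order to apply Ellis--Numakura there, which fails for the same reason. The correct route reverses your order of deductions. First prove directly that $uSu$ is a group with identity $u$, using only that $M = uS$ is a minimal right ideal: each $z \in uSu$ satisfies $zS = M \ni u$, so $zt = u$ for some $t$, whence $z(utu) = u$; a monoid in which every element has a right inverse is already a group. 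Minimality of $Su$ then follows with no appeal to closedness: given a left ideal $L' \subseteq Su$ and $y \in L'$, the inverse $v$ of $uy$ in $uSu$ gives $u = (vu)y \in Sy \subseteq L'$. For a general minimal left ideal $L$ and $z \in M \cap L$, one has $zu \in uSu$; letting $v$ be its group inverse, the identity $(zu)v = u$ gives $zv = u$, and then $(vz)^2 = v(zv)z = vuz = vz$, so $vz \in M \cap L$ is an idempotent, after which $M \cap L = (vz)S(vz)$ is a group by the same right-inverse argument.
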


\subsection{The Samuel compactification}

An excellent reference on the Samuel compactification is Samuel's original paper \cite{Sa}. For a more modern presentation focused on topological groups, see Uspenskij \cite{Usp}. For more on the greatest ambit and topological dynamics, see Auslander \cite{Aus}.

Let $(X, \mathcal{U})$ be a Hausdorff uniform space. The \emph{Samuel compactification} of $X$ is a compact Hausdorff space $S(X)$ along with a uniformly continuous map $i: X\rightarrow S(X)$ satisfying the followin universal property: if $Y$ is any compact Hausdorff space and $f: X\rightarrow Y$ is uniformly continuous, then there is a unique map $\tilde{f}: S(X)\rightarrow Y$ making the diagram commute.
\begin{center}
\begin{tikzpicture}[node distance=3cm, auto]
\node (X) {$X$};
\node (Y) [right of=X] {$Y$};
\node (SX) [node distance=2cm, above of=X] {$S(X)$};
\draw[->] (X) to node [swap] {$f$} (Y);
\draw[->] (X) to node {$i$} (SX);
\draw[->] (SX) to node {$\tilde{f}$} (Y);
\end{tikzpicture}
\end{center}
The map $i$ is always an embedding, so we often identify $X$ with its image in $S(X)$ and use the term ``Samuel compactification'' to just refer to the space $S(X)$.

We will be primarily interested in the Samuel compactification of a topological group $G$ equipped with its left uniformity. In this case, there is a quick, albeit uninformative, construction of $S(G)$. Let $\{(X_i, x_i): i\in I\}$ be a set of $G$-ambits. Form the product $\prod_i X_i$; this is a $G$-flow with $G$ acting on each coordinate. Now let $z_0 = (x_i)_{i\in I}$, and let $Z = \overline{z_0G}\subseteq \prod_i X_i$ be the orbit closure. Then $(Z, z_0)$ is an ambit, and the projection onto coordinate $i$ is a map of ambits from $(Z, z_0)$ to $(X_i, x_i)$. Suppose that we started with a set of ambits containing a representative of each isomorphism type of ambit; this is possible because there are only set-many compact Hausdorff spaces with a dense set of size at most $|G|$, and only set-many possible $G$-flow structures to place on each one. Then the ambit $(Z, z_0) := (S(G), 1_G)$ admits a map of ambits onto any other $G$-ambit. The point $1_G$ with dense orbit in $S(G)$ is just the identity of $G$ upon identifying $G$ with a subset of $S(G)$. The ambit $(S(G), 1_G)$ is often called the \emph{greatest ambit} for $G$.

The universal property allows us to endow $S(G)$ with the structure of a compact left-topological semigroup. If $x\in S(G)$, then $(\overline{xG}, x)$ is an ambit, so let $\lambda_x: S(G)\rightarrow \overline{xG}$ be the unique map of ambits. The notation is deliberately suggestive; we define this to be the left multiplication map, i.e.\ for $y\in S(G)$, we set $xy:= \lambda_x(y)$. It is routine to check that this multiplication is associative, and left multiplication is continuous by definition.

We can also use the universal property to have $S(G)$ act on $G$-flows. If $X$ is a $G$-flow and $x\in X$, then $(\overline{xG}, x)$ is an ambit; let $\phi_x: S(G)\rightarrow X$ be the unique map of ambits. If $p\in S(G)$, we then set $xp:= \phi_x(p)$. This extended action behaves nicely with the semigroup structure on $S(G)$; if $x\in X$ and $p, q\in S(G)$, we then have $x(pq) = (xp)q$.

We end this subsection with a simple lemma about subflows of $S(G)$. If $Y\subseteq S(G)$ is a pre-ambit, it makes sense to ask about the semigroup properties of points $y\in Y$ with $\overline{yG} = Y$. In particular, if some point with dense orbit is an idempotent, this has consequences for the dynamics of $Y$
\vspace{3 mm}

\begin{prop}
\label{PreambitSG}
Let $Y\subseteq S(G)$ be a pre-ambit, and assume there is an idempotent $u\in Y$ with dense orbit. Then if $\phi: Y\rightarrow S(G)$ is a $G$-map, then $\phi = \lambda_{\phi(u)}$.
\end{prop}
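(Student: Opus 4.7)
The strategy is to exploit the uniqueness of ambit maps and the idempotency of $u$. First I would observe two natural ambit structures that sit above $\phi(u)$. Since left multiplication $\lambda_u$ is continuous and $\lambda_u(1_G) = u$, the restriction $\lambda_u\colon (S(G), 1_G) \to (Y, u)$ is a map of ambits (noting that $\lambda_u(S(G)) = \overline{u G} = Y$ because $\lambda_u(S(G))$ is compact hence closed and contains $uG$). Similarly, $\lambda_{\phi(u)}\colon (S(G), 1_G) \to (\overline{\phi(u) G}, \phi(u))$ is a map of ambits. Meanwhile, $\phi$ being a $G$-map with $\phi(u) \in \overline{\phi(u)G}$ means that $\phi\colon (Y, u) \to (\overline{\phi(u)G}, \phi(u))$ is a map of ambits.

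Now consider the composition $\phi \circ \lambda_u \colon (S(G), 1_G) \to (\overline{\phi(u)G}, \phi(u))$. Both it and $\lambda_{\phi(u)}$ are maps of ambits with the same source and target. By the uniqueness of maps of ambits (recorded earlier in the excerpt), they must agree. This yields the key identity
\begin{align*}
\phi(u \cdot x) = \phi(u) \cdot x \qquad \text{for every } x \in S(G).
\end{align*}

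Two consequences drop out. Setting $x = u$ and using $u \cdot u = u$, we obtain
\begin{align*}
\phi(u) = \phi(u \cdot u) = \phi(u) \cdot u,
\end{align*}
so $\phi(u)$ is a right fixed point of $\lambda_{\phi(u)}\circ \rho_u$. Moreover, since $Y = \lambda_u(S(G)) = u S(G)$, every $y \in Y$ can be written as $y = u x$ for some $x \in S(G)$. Using the key identity and associativity in $S(G)$,
\begin{align*}
\phi(y) = \phi(u x) = \phi(u) \cdot x = \bigl(\phi(u) \cdot u\bigr) \cdot x = \phi(u) \cdot (u x) = \phi(u) \cdot y,
\end{align*}
which is exactly $\phi(y) = \lambda_{\phi(u)}(y)$. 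Thus $\phi = \lambda_{\phi(u)}|_Y$.

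No single step is delicate here; the only thing to be careful about is that $\phi$ is not assumed to be a semigroup map, only a $G$-map, so the identity $\phi(u \cdot x) = \phi(u) \cdot x$ must be obtained through the uniqueness-of-ambit-maps principle rather than by direct algebraic manipulation. Once that is in place, idempotency of $u$ combined with the identification $Y = u S(G)$ finishes the argument cleanly.
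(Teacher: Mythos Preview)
Your proof is correct and follows essentially the same route as the paper: both establish $Y = uS(G)$ and the key identity $\phi(ux) = \phi(u)x$, then conclude. The paper is slightly more direct in the final step, using $ux = x$ for $x\in uS(G)$ (Fact~\ref{SemiFacts}) to get $\phi(x) = \phi(ux) = \phi(u)x$ immediately, rather than your detour through $\phi(u)u = \phi(u)$ and associativity; also note that the identity $\phi(ux) = \phi(u)x$ follows directly from continuity of $\phi$ and $\lambda_u$, $\lambda_{\phi(u)}$ together with $G$-equivariance, so invoking uniqueness of ambit maps is not strictly necessary.
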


\begin{proof}
First note that $Y = \overline{uG} = uS(G)$. Fix $x\in Y$. Then $ux = x$ by Fact \ref{SemiFacts}. So $\phi(x) = \phi(ux) = \phi(u)x$.
\end{proof}

\subsection{\fr structures and Samuel compactifications}
\label{FraisseSG}

For a more detailed exposition of both the notational conventions developed here and the construction of $S(G)$ for $G = \aut{\mathbf{K}}$, see \cite{Z}.

For this subsection, fix a \fr structure $\mathbf{K} = \flim{\mathcal{K}}$ with $G = \aut{\mathbf{K}}$. An \emph{exhaustion} of $\mathbf{K}$ is a sequence $\{\mathbf{A}_n: n<\omega\}$ with $\mathbf{A}_n\subseteq \mathbf{A}_{n+1}\subseteq \mathbf{K}$, $\mathbf{A}_n\in \mathcal{K}$, and $\mathbf{K} = \bigcup_n \mathbf{A}_n$. When we write $\mathbf{K} = \bigcup_n \mathbf{A}_n$, we will assume that $\{\mathbf{A}_n: n< \omega\}$ is an exhaustion unless otherwise specified. We also assume $\mathbf{A}_0 = \emptyset$. For the rest of this subsection, fix an exhaustion of $\mathbf{K}$. Let $i_m: \mathbf{A}_m\rightarrow \mathbf{K}$ denote the inclusion embedding.

Let $m\leq n$. As a shorthand notation, write $H_m:= \emb{\mathbf{A}_m, \mathbf{K}}$ and $H_m^n := \emb{\mathbf{A}_m, \mathbf{A}_n}$. In particular, we have $H_m = \bigcup_{n\geq m} H^n_m$. Also, for any $m< \omega$, we have $H_0 = H_0^m = \{\emptyset\}$. Notice that $G$ acts on $H_m$ on the left by postcomposition. Since $\mathbf{K}$ is a \fr structure, this action is transitive. We will often write $g|_m$ for $g\cdot i_m$. 

Each $f\in H_m^n$ gives rise to a dual map $\hatf: H_n\rightarrow H_m$ given by precomposition, i.e.\ if $s\in H_n$, we have $\hatf(s) = s\circ f$. The notation is slightly imprecise, since the range of $f$ must be specified to know the domain of the dual map, but this will typically be clear from context. The following basic facts record the properties of dual maps we will use.
\vspace{3 mm}

\begin{prop}\mbox{}
\label{Amalgamation}
\begin{enumerate}
\item
For $f\in H^n_m$, the dual map $\hatf: H_n\rightarrow H_m$ is surjective.
\item
For every $f\in H_m^n$, there is $h\in H_n$ with $h\circ f = i^N_m$.
\end{enumerate}
\end{prop}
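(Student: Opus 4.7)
The plan is to derive both parts from the ultrahomogeneity of $\mathbf{K}$, noting that (2) is simply the special case of (1) taken with $s = i_m$: surjectivity of $\hatf$ yields some $h \in H_n$ with $\hatf(h) = h \circ f = i_m$ (reading the right-hand side of (2) as $i_m$). So the entire content lies in proving (1), which I will do by a one-shot application of the Fra\"iss\'e property.

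To prove (1), fix $f \in H_m^n$ and $s \in H_m$; I'll produce $t \in H_n$ with $t \circ f = s$. Since $\mathbf{A}_n \subseteq \mathbf{K}$ and $f \colon \mathbf{A}_m \to \mathbf{A}_n$ is an embedding, the image $f(A_m)$ is a finite substructure of $\mathbf{K}$, and $f$, now regarded as a map $\mathbf{A}_m \to \mathbf{K}$, is an isomorphism onto this substructure. Therefore $s \circ f^{-1}$ makes sense as an embedding of the finite substructure $f(A_m) \subseteq \mathbf{K}$ into $\mathbf{K}$. By the defining ultrahomogeneity of the Fra\"iss\'e structure $\mathbf{K}$, this partial embedding extends to some $g \in \aut{\mathbf{K}}$. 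The required map is then $t := g \circ i_n \in H_n$: for every $a \in A_m$,
\[
t(f(a)) = g(i_n(f(a))) = g(f(a)) = s(f^{-1}(f(a))) = s(a),
\]
so $\hatf(t) = t \circ f = s$, as desired.

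The only subtlety, which I expect to present essentially no obstacle, is the bookkeeping around the substructure structure on $f(A_m)$: because $\mathbf{A}_n \subseteq \mathbf{K}$, the $L$-structure induced on $f(A_m)$ from $\mathbf{A}_n$ agrees with the one induced from $\mathbf{K}$, so $f$ really is an isomorphism onto a common substructure of $\mathbf{A}_n$ and $\mathbf{K}$, and $s \circ f^{-1}$ really is an embedding into $\mathbf{K}$. Once this is noted, everything reduces to invoking the Fra\"iss\'e property exactly once.
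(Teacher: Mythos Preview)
Your proof is correct and essentially the same as the paper's. The paper phrases (1) as an application of the transitivity of the left $G$-action on $H_m$ (stated just before the proposition): given $f\in H_m^n\subseteq H_m$ and $h\in H_m$, pick $g\in G$ with $g\cdot f = h$, and then $\hatf(g|_n) = g|_n\circ f = g\cdot f = h$; your argument simply unpacks this transitivity claim back to the definition of ultrahomogeneity via the partial isomorphism $s\circ f^{-1}$, arriving at the same $g$ and the same $t = g|_n$. The paper also proves (2) directly rather than as a special case of (1), but again by the same one-line use of transitivity, so there is no substantive difference.
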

\vspace{0 mm}

\begin{proof}
For the first item, fix $f\in H_m^n$, and let $h\in H_m$. Find $g\in G$ with $g\cdot f = h$. Then by definition, we have $\hatf{g|_n} = g|_n\circ f = g\cdot f = h$.
\vspace{3 mm}

For the second item, fix $f\in H_m^n$. Find $g\in G$ with $g\cdot f = i_m$. Then we have $g|_n\circ f = g\cdot f = i_m$.
\end{proof}
\vspace{3 mm}

Our main goal for this subsection is to construct $S(G)$ and write down an explicit formula for its semigroup multiplication. The construction will require ``glueing together'' spaces of ultrafilters, so we briefly discuss ultrafilters and the \v{C}ech-Stone compactification.

Let $X$ be a set. A collection $p\subseteq \mathcal{P}(X)$ is called an \emph{ultrafilter} if the following conditions are met.
\begin{itemize}
\item
$X\in p$ and $\emptyset\not\in p$
\item
If $A\subseteq B\subseteq X$ and $A\in p$, then $B\in p$.

\item
If $A, B\subseteq X$ and $A, B\in p$, then $A\cap B\in p$.

\item
If $A\subseteq X$, then either $A\in p$ or $X\setminus A \in p$.
\end{itemize}
If $p\subseteq \mathcal{P}(X)$ only satisfies the first three items, then $p$ is called a filter. By Zorn's lemma, maximal filters exist, and the maximal filters on $X$ coincide with the ultrafilters on $X$.

Write $\beta X$ for the collection of all ultrafilters on $X$. We endow $\beta X$ with a compact Hausdorff, zero-dimensional topology where the typical clopen subset of $\beta X$ has the form $\overline{A}:= \{p\in \beta X: A\in p\}$, where $A\subseteq X$. We can identify $X$ as a subset of $\beta X$ by identifying each $x\in X$ with the ultrafilter $\{A\subseteq X: x\in A\}$. Viewing $X$ as a discrete space, the inclusion $i: X\rightarrow \beta X$ is called the \emph{\v{C}ech-Stone compactification} of $X$. It satisfies the same universal property as the Samuel compactification of $X$ when $X$ is given the discrete uniformity. To be explicit, if $f: X\rightarrow Y$ is \emph{any} map from $X$ to a compact Hausdorff space $Y$, there is a unique continuous map $\tilde{f}$ making the following diagram commute.
\begin{center}
\begin{tikzpicture}[node distance=3cm, auto]
\node (X) {$X$};
\node (Y) [right of=X] {$Y$};
\node (SX) [node distance=2cm, above of=X] {$\beta X$};
\draw[->] (X) to node [swap] {$f$} (Y);
\draw[->] (X) to node {$i$} (SX);
\draw[->] (SX) to node {$\tilde{f}$} (Y);
\end{tikzpicture}
\end{center}

If $Y\subseteq \beta X$ is a closed subspace, then the collection $\mathcal{F}_Y:= \{A\subseteq X: Y\subseteq \overline{A}\}$ is a filter on $X$. Conversely, if $\mathcal{H}$ is a filter on $X$, then $\tilde{\mathcal{H}} := \{p\in \beta X: \mathcal{F}\subseteq p\}$ is a closed subspace. Given a closed $Y\subseteq \beta X$, we have $\tilde{\mathcal{F}}_Y = Y$, and if $\mathcal{H}$ is a filter on $X$, we have $\mathcal{F}_{\tilde{\mathcal{H}}} = \mathcal{H}$. The following fact will be essential going forward. A proof can be found in \cite{HS}.
\vspace{3 mm}

\begin{fact}
\label{MetrizableBeta}
If $Y\subseteq \beta X$ is a closed, metrizable subspace, then $Y$ is finite.
\end{fact}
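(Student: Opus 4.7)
My plan is to proceed by contradiction. Suppose $Y \subseteq \beta X$ is closed, metrizable, and infinite. As an infinite compact metrizable space, $Y$ contains a non-trivially convergent sequence: I can find distinct ultrafilters $p_n \in Y$ with $p_n \to p \in Y$ and $p_n \neq p$ for every $n$. The goal is to contradict convergence by producing a clopen subset of $\beta X$ that contains $p$ but misses infinitely many $p_n$ (or vice versa).

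First I would fix, for each $n$, a set $A_n \in p_n \setminus p$; this exists because $p_n \neq p$ as ultrafilters. Since $X \setminus A_n \in p$, the clopen set $\overline{X \setminus A_n}$ is a neighborhood of $p$, and convergence forces $X \setminus A_n \in p_m$ for all but finitely many $m$. In other words, the set $F_n := \{m : A_n \in p_m\}$ is finite for every $n$.

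The combinatorial heart of the argument is to recursively extract a subsequence $n_1 < n_2 < \cdots$ together with pairwise disjoint sets $B_k \in p_{n_k}$. Taking $n_1 = 1$ and $B_1 = A_{n_1}$, and having chosen $n_1, \ldots, n_{k-1}$ and $B_1, \ldots, B_{k-1}$, I would pick $n_k$ to be larger than $\max(F_{n_1} \cup \cdots \cup F_{n_{k-1}})$, which is possible as each $F_{n_j}$ is finite. Then $A_{n_j} \notin p_{n_k}$ for every $j < k$, so $X \setminus A_{n_j} \in p_{n_k}$, and in particular $X \setminus B_j \in p_{n_k}$ since $B_j \subseteq A_{n_j}$. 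Setting $B_k := A_{n_k} \cap \bigcap_{j<k}(X \setminus B_j)$ produces an element of $p_{n_k}$ disjoint from each earlier $B_j$.

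Finally, let $E = \bigcup_{k \text{ odd}} B_k$. For odd $k$, the inclusion $B_k \subseteq E$ forces $E \in p_{n_k}$, so $p_{n_k} \in \overline{E}$; for even $k$, disjointness $B_k \cap E = \emptyset$ yields $X \setminus E \in p_{n_k}$, so $p_{n_k} \notin \overline{E}$. Since $\overline{E}$ is clopen in $\beta X$, whichever of $\overline{E}$ or $\overline{X \setminus E}$ contains $p$ is an open neighborhood of $p$ missing infinitely many $p_{n_k}$, contradicting $p_{n_k} \to p$. The main obstacle is the disjointness construction; once one observes that each support set $F_n$ is finite — a direct consequence of convergence together with $A_n \notin p$ — the remaining bookkeeping is routine.
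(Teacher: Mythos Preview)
Your argument is correct. The paper does not actually supply a proof of this fact; it merely states it and refers the reader to Hindman and Strauss \cite{HS}. What you have written is essentially the standard argument that $\beta X$ (for $X$ discrete) contains no non-trivial convergent sequences, from which the fact follows immediately since an infinite compact metrizable space must contain such a sequence. The recursive disjointification in Step~5 and the odd/even splitting in Step~6 are exactly the usual maneuver, and your observation that each $F_n$ is finite is the right hinge between convergence and the combinatorics.
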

\vspace{3 mm}

We now turn to the construction of $S(G)$. The main idea of the construction is to view the sets $H_m$ as discrete spaces and consider their \v{C}ech-Stone compactifications. Let $f\in H^n_m$. Then the dual map $\hatf: H_n\rightarrow H_m\subseteq \beta H_m$ extends uniquely to a continuous map $\tildef: \beta H_n\rightarrow \beta H_m$. Given $p\in \beta H_n$, we often write $p\cdot f := \tildef(p)$. 

Form the inverse limit $\varprojlim \beta H_n$ of the spaces $\beta H_n$ along the maps $\tildei{m}: \beta H_n\rightarrow \beta H_m$. This is also a zero-dimensional compact Hausdorff space. Let $\pi_m: \varprojlim \beta H_n\rightarrow \beta H_m$ be the projection onto the $m$-th coordinate; then a typical clopen neighborhood is of the form $\{\alpha\in \varprojlim \beta H_n: S\in \pi_m(\alpha)\}$, where $m< \omega$ and $S\subseteq H_m$. We have the following fact due to Pestov \cite{Pe}.
\vspace{3 mm}

\begin{fact}
$S(G)\cong \varprojlim \beta H_n$
\end{fact}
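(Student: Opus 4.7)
The plan is to identify $\varprojlim \beta H_n$ with $S(G)$ by constructing a natural map from $G$ and verifying the universal property of the Samuel compactification. First, define $\iota: G \to \varprojlim \beta H_n$ by $\iota(g) = (g|_n)_{n<\omega}$, where each $g|_n \in H_n$ is viewed as the corresponding principal ultrafilter in $\beta H_n$. Coherence under the bonding maps is immediate, since $\hati{m}{n}(g|_n) = g|_n \circ i_m^n = g|_m$ and $\tildei{m}$ extends $\hati{m}{n}$. Density of $\iota(G)$ uses Fra\"iss\'e homogeneity: any basic clopen set in $\varprojlim \beta H_n$ has the form $\{\alpha : S \in \pi_m(\alpha)\}$ for some nonempty $S \subseteq H_m$, and by homogeneity any $s \in S$ is realized as $g|_m$ for some $g \in G$, whence $\iota(g)$ lies in the set.

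Next I would check that $\iota$ is uniformly continuous. Since $\varprojlim \beta H_n$ is zero-dimensional compact Hausdorff, its unique compatible uniformity has a basis of entourages coming from finite clopen partitions at some level $m$: if $\mathcal{P} = \{\overline{S_1}, \ldots, \overline{S_k}\}$ is such a partition of $\beta H_m$, the resulting entourage identifies two points when their $m$-th projections lie in a common piece. This is implied by $g|_m = h|_m$, which in turn is implied by $g|_{A_m} = h|_{A_m}$; the latter is a basic entourage of the left uniformity on $G$, so $\iota$ is uniformly continuous.

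The substance lies in the universal property. Given a uniformly continuous $f: G \to Y$ with $Y$ compact Hausdorff, I need a unique continuous $F : \varprojlim \beta H_n \to Y$ with $F \circ \iota = f$. The key lemma is that uniform continuity forces $f$ to be a uniform limit of functions factoring through some finite level: given an open neighborhood $U$ of the diagonal of $Y$, choose $m$ with $g|_m = h|_m \Rightarrow (f(g), f(h)) \in U$; pick any section $\sigma_m : H_m \to G$ satisfying $\sigma_m(s)|_m = s$ (available by homogeneity); and set $f_m := f \circ \sigma_m$. By the universal property of the \v{C}ech--Stone compactification, $f_m$ extends uniquely to $\bar f_m : \beta H_m \to Y$, and $F_m := \bar f_m \circ \pi_m$ is continuous on $\varprojlim \beta H_n$. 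The maps $F_m \circ \iota$ approximate $f$ to within $U$ on $G$; by density of $\iota(G)$ and uniform completeness of $Y$, the net $\{F_m\}$ converges uniformly to the desired continuous extension $F$. Uniqueness is immediate from density of $\iota(G)$ plus the Hausdorff property of $Y$.

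The main obstacle is the convergence step in the universal property: different sections $\sigma_m$ yield different approximants, and one must argue that all such choices produce the same limit. This is precisely what uniform continuity provides, as it forces any two approximants to be eventually within any prescribed entourage. An equivalent, more conceptual formulation is $C^*$-algebraic: pullback along $\iota$ gives an isometric embedding $C(\varprojlim \beta H_n) \hookrightarrow C_u^b(G)$, and the approximation argument shows this embedding is surjective. Gelfand duality then identifies the Gelfand spectra, yielding $S(G) \cong \varprojlim \beta H_n$.
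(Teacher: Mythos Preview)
The paper does not actually prove this statement: it is recorded as a fact due to Pestov \cite{Pe}, with a pointer to \cite{Z} for a fuller exposition of the construction. So there is no ``paper's own proof'' to compare against here.

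Your argument is a correct direct verification of the universal property of the Samuel compactification, and it is essentially the standard one. A couple of small points worth tightening. First, in the convergence step you pass from ``$F_m\circ\iota$ and $F_{m'}\circ\iota$ are $U$-close on $\iota(G)$'' to uniform Cauchyness of $(F_m)$ on all of $\varprojlim\beta H_n$; this is fine, but you should say explicitly that you take $U$ closed (or pass to $\overline{U}$) so that the set $\{x:(F_m(x),F_{m'}(x))\in U\}$ is closed, hence contains the closure of the dense set $\iota(G)$. Second, your $C^*$-algebraic reformulation is the cleanest way to package the argument: pullback along $\iota$ is an isometric $*$-homomorphism $C(\varprojlim\beta H_n)\hookrightarrow \mathrm{LUC}(G)$ by density of $\iota(G)$, and your approximation lemma shows every $f\in\mathrm{LUC}(G)$ is a uniform limit of functions of the form $\bar f_m\circ\pi_m\circ\iota$, giving surjectivity. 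Gelfand duality then finishes. Either presentation is fine; the second avoids having to be careful about entourages.
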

\vspace{3 mm}

From now on, we will identify $S(G)$ with $\varprojlim \beta H_n$. We now proceed to exhibit the right $G$-action on $S(G)$ which makes $(S(G), 1_G)$ the greatest ambit. This might seem unnatural at first; after all, the left $G$-action on each $H_n$ extends to a left $G$-action on $\beta H_n$, giving us a left $G$-action on $S(G)$. However, this left $G$-action isn't continuous when $G$ is given its pointwise convergence topology.  The right action we describe doesn't operate on any one $\beta H_m$; the various ``levels'' of the inverse limit will interact with each other in our definition. 

For $\alpha\in S(G)$, we often write $\alpha(m)$ for $\pi_m(\alpha)$. 
\vspace{3 mm}

\begin{defin}
Given $\alpha\in S(G)$ and $g\in G$, we define $\alpha g\in S(G)$ by setting $\alpha g(m) = \alpha(n)\cdot g|_m$, where $n\geq m$ is large enough so that $g|_m \in H_m^n$. More explicitly, if $\alpha\in S(G)$, $g\in G$, $m< \omega$, and $S\subseteq H_m$, we have
\begin{align*}
S\in \alpha g(m) \Leftrightarrow \{x\in H_n: x\circ g|_m\in S\}\in \alpha(n)
\end{align*}
where $n\geq m$ is suitably large. 
\end{defin}
\vspace{3 mm}

From the definition, we see that this right action is continuous. We embed the left completion $\widehat{G}$ into $S(G)$ by identifying $\eta\in \widehat{G}$ with the element $i(\eta)$ of $S(G)$ with $i(\eta)(m) = \eta|_m\in H_m\subseteq \beta H_m$. By regarding $G$ as a subset of $S(G)$, we now have the following fact from \cite{Z}
\vspace{3 mm}

\begin{fact}
$(S(G), 1_G)$ equipped with the above right action is the greatest $G$-ambit.
\end{fact}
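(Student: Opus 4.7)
The plan has three ingredients: verify that the formula defines a jointly continuous right action of $G$ on $S(G) = \varprojlim \beta H_n$ satisfying $\alpha \cdot 1_G = \alpha$; check that the orbit of $1_G$ is dense; and establish the universal property characterizing the greatest ambit.

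For the action, well-definedness reduces to showing that $\alpha(n) \cdot g|_m$ is independent of the choice of sufficiently large $n$: if $n' \geq n$ both satisfy $g|_m \in H_m^n \subseteq H_m^{n'}$, then the factorization through the inclusion $i_n^{n'}$ combined with contravariance of the dual construction and the compatibility $\alpha(n) = \widetilde{i_n^{n'}}(\alpha(n'))$ built into the inverse limit equates the two expressions. For continuity in $\alpha$, a basic clopen $U = \{\beta : S \in \beta(m)\}$ has preimage under right translation by $g$ equal to $\{\alpha : T \in \alpha(n)\}$, where $T = \{x \in H_n : x \circ g|_m \in S\}$. Joint continuity in $(\alpha, g)$ follows because any $g' \in G$ agreeing with $g$ on $A_n$ satisfies $g'|_m = g|_m$, and the set of such $g'$ is a basic neighborhood of $g$ in the pointwise convergence topology. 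Associativity $\alpha(gh) = (\alpha g)h$ reduces to the identity $(gh)|_m = g|_{n'} \circ h|_m$ together with contravariance of the dual; and $\alpha \cdot 1_G = \alpha$ is immediate because the dual of $1_G|_m = i_m^n$ is exactly the $(n \to m)$ bonding map of the inverse limit.

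For density of the orbit of $1_G$, any nonempty basic clopen $U = \{\alpha : S \in \alpha(m)\}$ has $S \neq \emptyset$, since no ultrafilter contains the empty set; by the Fra\"iss\'e property, $G$ acts transitively on $H_m$, so some $g \in G$ satisfies $g|_m \in S$, and then $(1_G \cdot g)(m)$ is the principal ultrafilter at $g|_m$, placing $1_G \cdot g$ in $U$.

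For the universal property, given any $G$-ambit $(X, x_0)$, the orbit map $g \mapsto x_0 g$ is uniformly continuous from $G$ with its left uniformity to $X$: for a prescribed entourage of $X$, choose $V \ni 1_G$ with $x_0 V$ small, so that $g^{-1} h \in V$ forces $x_0 h = (x_0 g)(g^{-1} h)$ to lie in a controlled neighborhood of $x_0 g$. The abstract universal property of the Samuel compactification then yields a unique continuous extension $\phi: S(G) \to X$ with $\phi(1_G) = x_0$, and $G$-equivariance is checked by a density argument: for each $g \in G$, the two continuous maps $\alpha \mapsto \phi(\alpha g)$ and $\alpha \mapsto \phi(\alpha) g$ agree on the dense subset $G \subseteq S(G)$, hence everywhere. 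The main obstacle is the bookkeeping matching the abstract Samuel compactification to the concrete inverse limit, namely verifying that the right action defined level-by-level is indeed the continuous extension of right translation on $G$; once this identification is made, the remaining verifications are routine continuity and density arguments.
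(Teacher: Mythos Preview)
The paper does not prove this Fact; it is stated as a result imported from \cite{Z}, with only the remark that continuity of the right action is clear from the definition. Your proposal therefore supplies a proof where the paper gives none, and the outline you give is correct: the action is well-defined and jointly continuous for the reasons you indicate, the orbit of $1_G$ is dense because $G$ acts transitively on each $H_m$, and the universal property follows by invoking the already-stated identification $S(G)\cong\varprojlim\beta H_n$ together with the abstract universal property of the Samuel compactification applied to the left-uniformly-continuous orbit map $g\mapsto x_0g$.

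One point worth making explicit, which you flag as ``bookkeeping'' at the end: your argument for the universal property relies on the prior Fact (attributed to Pestov) that $\varprojlim\beta H_n$ \emph{is} the Samuel compactification of $(G,\text{left uniformity})$, i.e.\ that the embedding $G\hookrightarrow\varprojlim\beta H_n$ has the correct extension property for uniformly continuous maps into compacta. You are entitled to use this since the paper states it beforehand, but a reader might expect the proof of the present Fact to be self-contained from the inverse-limit description. If you wanted to avoid that dependency, you could instead construct the map of ambits directly level by level: given $(X,x_0)$, the map $f\mapsto x_0\cdot g$ (for any $g$ with $g|_m=f$) is well-defined on $H_m$ once $m$ is large enough relative to a given entourage, and these maps are compatible with the bonding maps and extend to $\beta H_m$, yielding a map out of the inverse limit. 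Either route is fine; yours is cleaner given what the paper has already granted.
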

\vspace{3 mm}

We can also write explicitly the left-topological semigroup structure on $S(G)$. Fix $\alpha\in S(G)$. If $f\in H_m$ and $n, N\geq m$ are both suitably large, we see that $\alpha(n)\cdot f = \alpha(N)\cdot f$, and we simply write $\alpha \cdot f$. We write $\rho_f$ for the map $\alpha\rightarrow \alpha\cdot f$. Notice that $\rho_f$ is continuous.

The map from $H_m$ to $\beta H_m$ sending $f$ to $\alpha\cdot f$ has a unique continuous extention. We denote this extension by $\lambda_\alpha^m$, while often writing $\lambda_\alpha^m(p) := \alpha \cdot p$. More explicitly, if $p\in \beta H_m$ and $S\subseteq H_m$, we have 
\begin{align*}
S\in \alpha\cdot p \Leftrightarrow \{f\in H_m: S\in \alpha\cdot f\}\in p
\end{align*}
Given $\alpha\in S(G)$ and $S\subseteq H_m$, a useful shorthand is to put $\alpha^{-1}(S) := \{f\in H_m: S\in \alpha\cdot f\}$. Then we can write $S\in \alpha\cdot p$ iff $\alpha^{-1}(S)\in p$. 

The notation $\lambda_\alpha^m$ is deliberately suggestive, as this will be the left multiplication by $\alpha$ ``restricted to level $m$.'' If $\alpha$ and $\gamma$ in $\varprojlim \beta H_n$, we define $\alpha\cdot \gamma$ by setting $(\alpha\cdot \gamma)(m) = \alpha\cdot (\gamma(m))$, and we write $\lambda_\alpha$ for the map semding $\gamma$ to $\alpha\cdot \gamma$. To check that this operation is the semigroup operation discussed abstractly in the previous subsection, it suffices to prove the following proposition.
\vspace{3 mm}

\begin{prop}
\label{SemigroupLaw}
Fix $\alpha\in S(G)$. Then $\lambda_\alpha$ is a $G$-map with $\lambda_\alpha(1_G) = \alpha$.
\end{prop}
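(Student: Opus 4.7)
The plan is to verify the three requisite properties of $\lambda_\alpha$ --- that it sends $1_G$ to $\alpha$, that it is continuous, and that it is $G$-equivariant --- essentially by unwinding the definitions at each level $m$ of the inverse limit and reducing to statements about the dual maps $\widetilde{f}$ on $\beta H_n$. Everything will come down to the fact that composition of embeddings is associative, together with the density of each $H_n$ in $\beta H_n$.

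For $\lambda_\alpha(1_G) = \alpha$, I would simply compute level by level: $1_G(m) = i_m \in H_m \subseteq \beta H_m$, so $\lambda_\alpha(1_G)(m) = \alpha \cdot i_m$. Taking $n = m$ in the definition of $\alpha \cdot f$, this equals $\widetilde{i_m}(\alpha(m))$, and since $\widehat{i_m} : H_m \to H_m$ is precomposition with the inclusion (hence the identity), $\widetilde{i_m}$ is also the identity on $\beta H_m$. Thus $\lambda_\alpha(1_G)(m) = \alpha(m)$ for every $m$. For continuity, recall that a basic clopen in $S(G)$ has the form $U_{m, S} = \{\delta \in S(G) : S \in \delta(m)\}$ for some $m$ and $S \subseteq H_m$. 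Its preimage under $\lambda_\alpha$ is
\begin{align*}
\lambda_\alpha^{-1}(U_{m, S}) = \{\gamma : S \in \alpha \cdot \gamma(m)\} = \{\gamma : \alpha^{-1}(S) \in \gamma(m)\} = U_{m, \alpha^{-1}(S)},
\end{align*}
which is again basic clopen, so $\lambda_\alpha$ is continuous.

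The main content is $G$-equivariance, i.e.\ $\alpha \cdot (\gamma g) = (\alpha \cdot \gamma)g$. Fix $m < \omega$ and choose $n \geq m$ large enough that $g|_m \in H_m^n$. Unwinding both sides at level $m$ gives
\begin{align*}
(\alpha \cdot (\gamma g))(m) = \lambda_\alpha^m\bigl(\widetilde{g|_m}(\gamma(n))\bigr), \qquad ((\alpha \cdot \gamma)g)(m) = \widetilde{g|_m}\bigl(\lambda_\alpha^n(\gamma(n))\bigr),
\end{align*}
so it suffices to prove the commutation of continuous maps $\lambda_\alpha^m \circ \widetilde{g|_m} = \widetilde{g|_m} \circ \lambda_\alpha^n$ on $\beta H_n$. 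By density and continuity it is enough to check this on $s \in H_n$. For such $s$, choose $N \geq n$ large enough that $s \in H_n^N$ (hence $s \circ g|_m \in H_m^N$). The left side produces $\alpha \cdot (s \circ g|_m) = \widetilde{s \circ g|_m}(\alpha(N))$; testing on $T \subseteq H_m$, membership reduces to $\{t \in H_N : t \circ (s \circ g|_m) \in T\} \in \alpha(N)$. The right side produces $\widetilde{g|_m}(\alpha \cdot s) = \widetilde{g|_m}(\widetilde{s}(\alpha(N)))$, and the same test on $T$ reduces to $\{t \in H_N : (t \circ s) \circ g|_m \in T\} \in \alpha(N)$. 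Associativity of composition identifies these two sets, completing the check.

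The argument is essentially bookkeeping, and the only genuine obstacle I anticipate is managing the three layers of notation --- embeddings, their dual maps $\widehat{\cdot}$ on finite-level spaces, and the Stone extensions $\widetilde{\cdot}$ on ultrafilter spaces --- without losing track of which space each object lives in. Once the diagram $\lambda_\alpha^m \circ \widetilde{g|_m} = \widetilde{g|_m} \circ \lambda_\alpha^n$ is set up correctly, reduction to the dense set $H_n$ makes the required identity a one-line application of associativity.
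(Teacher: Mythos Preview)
Your proof is correct. Both your argument and the paper's reduce $G$-equivariance to associativity of composition via a density-plus-continuity step, but they package it differently: the paper approximates $\gamma$ by a net $g_i\in G$, observes that $\alpha\cdot(g_i\circ g|_m) = (\alpha\cdot g_i)\cdot g|_m$ holds for genuine group elements, and passes to the limit using continuity of $\lambda_\alpha^m$ and $\rho_{g|_m}$; you instead fix $\gamma$ and establish the commuting square $\lambda_\alpha^m\circ\widetilde{g|_m} = \widetilde{g|_m}\circ\lambda_\alpha^n$ on $\beta H_n$ by checking it on the dense subset $H_n$. Your version is more explicit about the level-by-level bookkeeping and makes the diagrammatic content visible; the paper's net argument is shorter but hides the same computation inside the phrase ``$\alpha\cdot(g_i\circ g|_m) = (\alpha\cdot g_i)\cdot g|_m$.'' The paper also dispenses with continuity and $\lambda_\alpha(1_G)=\alpha$ in a word (``by definition'' and ``clear''), whereas you spell both out --- harmless, and arguably clearer for a reader meeting this construction for the first time.
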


\begin{proof}
By definition $\lambda_\alpha$ is continuous. Fix $\gamma\in S(G)$ and $g\in G$. Fix a net $g_i\in G$ with $g_i\rightarrow \gamma$. As the map $\rho_{g|_m}$ is continuous, we have $g_i\circ g|_m\rightarrow \gamma\cdot g|_m$. Then as $\lambda_\alpha^m$ is continuous, we have $\alpha\cdot (g_i\circ g|_m) \rightarrow \alpha\cdot (\gamma\cdot g|_m)$. But $\alpha\cdot (g_i\circ g|_m) = (\alpha\cdot g_i)\cdot g|_m$, so once more by continuity of the maps $\lambda_\alpha^m$ and $\rho_{g|_m}$, we have $(\alpha\cdot g_i)\cdot g|_m\rightarrow (\alpha\cdot \gamma)\cdot g|_m$. It follows that $(\alpha\cdot \gamma)\cdot g = \alpha \cdot (\gamma\cdot g)$ as desired. That $\lambda_\alpha(1_G) = \alpha$ is clear. 
\end{proof}
\vspace{3 mm}

We end this subsection by recording some basic facts about metrizable subspaces and subflows of $S(G)$. If $Y\subseteq S(G)$ is metrizable, then so is $\pi_m''(Y)$ for every $m< \omega$. Fact \ref{MetrizableBeta} immediately yields the following. 
\vspace{3 mm}

\begin{fact}
\label{MetrizableSG}
$Y\subseteq S(G)$ is metrizable iff $\pi_m''(Y)$ is finite for each $m< \omega$.
\end{fact}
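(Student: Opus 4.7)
The plan is to peel apart the two directions separately and use the hypothesis that $S(G) = \varprojlim \beta H_n$ is an inverse limit over the countable index set $\omega$ coming from the fixed exhaustion $\mathbf{K} = \bigcup_n \mathbf{A}_n$.

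For the forward direction, suppose $Y\subseteq S(G)$ is metrizable. Since $Y$ is a closed subspace of the compact Hausdorff space $S(G)$, $Y$ itself is compact. Each projection $\pi_m: S(G)\rightarrow \beta H_m$ is continuous, so the image $\pi_m''(Y)$ is a compact (hence closed, since $\beta H_m$ is Hausdorff) subspace of $\beta H_m$, and it is the continuous image of a compact metrizable space, hence metrizable. Now Fact \ref{MetrizableBeta} applies directly: a closed metrizable subspace of $\beta H_m$ must be finite, so $\pi_m''(Y)$ is finite.

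For the reverse direction, suppose $\pi_m''(Y)$ is finite for every $m < \omega$. Define $\Phi: Y \rightarrow \prod_{m<\omega} \pi_m''(Y)$ by $\Phi(\alpha) = (\pi_m(\alpha))_{m<\omega}$. This map is continuous (each coordinate is continuous) and injective, because a point $\alpha \in S(G) = \varprojlim \beta H_n$ is determined by the sequence $(\alpha(m))_{m<\omega}$. The codomain is a countable product of finite discrete spaces, hence second-countable and metrizable. Since $Y$ is compact Hausdorff and $\Phi$ is a continuous injection into a Hausdorff space, $\Phi$ is a homeomorphism onto its image, so $Y$ is metrizable.

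There is really no main obstacle here; the statement is an immediate consequence of Fact \ref{MetrizableBeta} combined with the fact that the inverse limit is indexed by a countable set, which is in turn a consequence of $\mathbf{K}$ being countable. The only subtlety worth flagging is making sure $\pi_m''(Y)$ is genuinely closed in $\beta H_m$ so that Fact \ref{MetrizableBeta} applies, and this follows from compactness of $Y$ together with the Hausdorff property of $\beta H_m$.
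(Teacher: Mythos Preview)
Your argument is correct and is precisely the fleshed-out version of what the paper intends: the paper simply remarks that metrizability of $Y$ passes to each $\pi_m''(Y)$ and then invokes Fact~\ref{MetrizableBeta}, leaving the converse implicit. Note that both you and the paper tacitly assume $Y$ is closed (equivalently compact) in $S(G)$; this is genuinely needed for the forward direction---otherwise a countably infinite discrete $Y\subseteq S(G)$ projecting onto infinitely many points of $H_m$ would be a counterexample---and it holds in every application in the paper, where $Y$ is always a subflow.
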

\vspace{3 mm}

In the same vein as Proposition \ref{PreambitSG}, we can ask about pre-ambits $Y\subseteq S(G)$ containing an idempotent with dense orbit. In the case $G = \aut{\mathbf{K}}$, we can say more.
\vspace{3 mm}

\begin{prop}
\label{EndoIso}
Let $Y\subseteq S(G)$ be a metrizable pre-ambit, and assume there is an idempotent $u\in Y$ with dense orbit. If $\phi: Y\rightarrow Y$ is a surjective $G$-map, then $\phi$ is an isomorphism.
\end{prop}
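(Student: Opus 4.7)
The plan is to reduce everything to the finite levels $\pi_m''(Y)$, where surjectivity automatically upgrades to bijectivity.

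First, I would apply Proposition \ref{PreambitSG} to write $\phi = \lambda_v|_Y$, where $v := \phi(u) \in Y$. This reduces the question to showing that left multiplication by a single element $v \in S(G)$ acts injectively on $Y$. Since $v \in Y = uS(G)$ we have $uv = v$, and from $\phi(u) = vu$ combined with $v = \phi(u)$ we also get $vu = v$; these identities will not actually be needed, but they illustrate how $v$ sits in the semigroup.

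Second, I would exploit the fact that the semigroup multiplication on $S(G) = \varprojlim \beta H_n$ was defined coordinatewise, i.e.\ $(v\cdot \gamma)(m) = \lambda_v^m(\gamma(m))$. Setting $Y_m := \pi_m''(Y)$, this gives the commutative square
\begin{align*}
\pi_m \circ \phi \;=\; \lambda_v^m \circ \pi_m \quad \text{on } Y.
\end{align*}
By Fact \ref{MetrizableSG}, metrizability of $Y$ forces each $Y_m$ to be \emph{finite}. Note that $\lambda_v^m$ sends $Y_m$ into itself, because $\lambda_v^m(\pi_m(\gamma)) = \pi_m(\phi(\gamma)) \in \pi_m''(Y) = Y_m$.

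Third, I would use surjectivity of $\phi$ to deduce surjectivity (and hence bijectivity) of each $\lambda_v^m$ on $Y_m$. Given $\delta \in Y_m$, write $\delta = \pi_m(\gamma)$ for some $\gamma \in Y$; by surjectivity of $\phi$, $\gamma = \phi(\gamma')$ for some $\gamma' \in Y$, so $\delta = \pi_m(\phi(\gamma')) = \lambda_v^m(\pi_m(\gamma'))$. Hence $\lambda_v^m : Y_m \to Y_m$ is a surjection of a finite set, hence injective. Finally, if $\phi(\gamma_1) = \phi(\gamma_2)$ for $\gamma_1,\gamma_2 \in Y$, then applying $\pi_m$ gives $\lambda_v^m(\pi_m(\gamma_1)) = \lambda_v^m(\pi_m(\gamma_2))$, so $\pi_m(\gamma_1) = \pi_m(\gamma_2)$ for every $m$, and since $Y \subseteq S(G) = \varprojlim \beta H_n$, we conclude $\gamma_1 = \gamma_2$.

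The main conceptual step, and really the only non-mechanical one, is recognizing that metrizability of $Y$ should be deployed not at the level of $Y$ itself (where it just gives a metric) but via Fact \ref{MetrizableSG} to finitize each projection $Y_m$; once this is in hand, the rest is the standard observation that a self-surjection of a finite set is a bijection, propagated back through the inverse limit. I do not anticipate a real obstacle.
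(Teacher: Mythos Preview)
Your proof is correct and follows essentially the same approach as the paper's: apply Proposition~\ref{PreambitSG} to write $\phi = \lambda_{\phi(u)}$, use Fact~\ref{MetrizableSG} to see each $\pi_m''(Y)$ is finite, and conclude that the induced level maps $\lambda_{\phi(u)}^m$ are surjective self-maps of finite sets, hence bijections. The paper's argument is simply a more compressed version of yours, leaving implicit the verification that $\lambda_v^m$ maps $Y_m$ into itself and the inverse-limit step for injectivity.
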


\begin{proof}
By Proposition \ref{PreambitSG}, we have $\phi = \lambda_{\phi(u)}$. Then the map $\lambda_{\phi(u)}^m: \pi_m''(Y)\rightarrow \pi_m''(Y)$ must be surjective. Since $\pi_m''(Y)$ is finite, $\lambda_{\phi(u)}^m$ must be a bijection. It follows that $\phi$ is a bijection, hence an isomorphism.
\end{proof}

\section{Lifts of ambits and pre-ambits}
\label{LiftsSection}

This section contains most of the dynamical content needed in the proof of Theorem \ref{BRUCA}. We actually develop more than what we will need and in greater generality, since the techniques presented here seem interesting in their own right. Throughout this seciton, let $G$ be a Hausdorff topological group, and let $\widehat{G}$ denote its left completion.
\vspace{3 mm}

\begin{defin}\mbox{}
\begin{enumerate}
\item
Let $X$ be a pre-ambit. The \emph{ambit set} of $X$ is the set $\mathcal{A}(X):= \{x\in X: \overline{xG} = X\}$.
\item
Let $X$ and $Y$ be pre-ambits. A $G$-map $\phi: X\rightarrow Y$ is called \emph{strong} if for any $x\in X$, we have $x\in \mathcal{A}(X)\Leftrightarrow \phi(x)\in \mathcal{A}(Y)$
\end{enumerate}
\end{defin}
\vspace{3 mm}

Whenever we refer to strong maps, we will always assume that the domain and range are pre-ambits. It is worth pointing out that any strong map between pre-ambits must be surjective. Notice that if $\phi: X\rightarrow Y$ is any surjective $G$-map between pre-ambits, then $x\in \mathcal{A}(X)$ implies that $\phi(x)\in \mathcal{A}(Y)$. It is the reverse implication that makes strong maps useful. The following easy proposition hints at why this notion will be useful going forward.
\vspace{3 mm}

\begin{prop}
\label{PreimageStrongCompletion}
Let $Y$ be a completion flow, and let $\phi: X\rightarrow Y$ be a strong $G$-map. Then $X$ is a completion flow. 
\end{prop}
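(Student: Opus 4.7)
The plan is to take a completion point $y_0 \in Y$, lift it to a preimage $x_0 \in X$, and show that $x_0$ is a completion point of $X$. Since $\phi$ is a strong map between pre-ambits, $\phi$ is surjective (as noted immediately before the statement of the proposition), so such an $x_0$ exists. The claim is then that for every $\eta \in \widehat{G}$, the point $x_0 \cdot \eta$ lies in $\mathcal{A}(X)$.

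To check this, fix $\eta \in \widehat{G}$ and choose a net $g_i \in G$ with $g_i \to \eta$. Using the continuity of the extended action $X \times \widehat{G} \to X$, we have $x_0 \cdot g_i \to x_0 \cdot \eta$ in $X$ and $y_0 \cdot g_i \to y_0 \cdot \eta$ in $Y$. Applying continuity of $\phi$ together with its $G$-equivariance yields
\begin{align*}
\phi(x_0 \cdot \eta) = \lim_i \phi(x_0 \cdot g_i) = \lim_i \phi(x_0) \cdot g_i = \lim_i y_0 \cdot g_i = y_0 \cdot \eta,
\end{align*}
so $\phi$ is in fact $\widehat{G}$-equivariant. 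Since $y_0$ is a completion point of $Y$, we have $y_0 \cdot \eta \in \mathcal{A}(Y)$, i.e.\ $\phi(x_0 \cdot \eta) \in \mathcal{A}(Y)$.

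Now we invoke the crucial reverse implication built into the definition of a strong map: $\phi(x_0 \cdot \eta) \in \mathcal{A}(Y)$ forces $x_0 \cdot \eta \in \mathcal{A}(X)$. As $\eta \in \widehat{G}$ was arbitrary, $x_0$ is a completion point of $X$, so $X$ is a completion flow. There is no real obstacle here; the whole proof amounts to combining surjectivity of strong maps, the routine extension of equivariance from $G$ to $\widehat{G}$, and the reverse implication in the definition of ``strong,'' which is evidently the reason that condition was imposed in the first place.
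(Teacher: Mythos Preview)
Your proof is correct and follows essentially the same approach as the paper's: lift a completion point along the surjection and use the reverse implication in the definition of strong map to conclude. The only difference is that you spell out the $\widehat{G}$-equivariance of $\phi$ via a net argument, whereas the paper simply writes $\phi(x\eta) = \phi(x)\eta$ without further comment.
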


\begin{proof}
Let $y\in Y$ be a completion point. Pick $x\in X$ with $\phi(x) = y$. We claim that $x$ is a completion point. Let $\eta\in \widehat{G}$. Then $\phi(x\eta) = \phi(x)\eta = y\eta\in \mathcal{A}(Y)$, so $x\eta\in \mathcal{A}(X)$.
\end{proof}
\vspace{3 mm}

The following definition will be our main source of strong maps. Recall that if $(X, x_0)$ is an ambit, we write $\phi_{x_0}: S(G)\rightarrow X$ for the unique map of ambits from $S(G)$.
\vspace{3 mm}

\begin{defin}
\label{LiftAmbit}
Let $(X, x_0)$ be an ambit.
\begin{enumerate}
\item
The \emph{fixed point semigroup} of $(X, x_0)$ is $S_{x_0} := \phi_{x_0}^{-1}(\{x_0\}) = \{p\in S(G): x_0\cdot p = x_0\}$. It is a closed subsemigroup of $S(G)$, hence a compact, left-topological semigroup in its own right.
\item
A \emph{lift} of $(X, x_0)$ is any subflow $Y\subseteq S(G)$ which is minimal subject to the property that $Y\cap S_{x_0}\neq \emptyset$.
\end{enumerate}
\end{defin}
\vspace{3 mm}
Notice that since $S_{x_0}$ is compact, Zorn's lemma ensures that any ambit admits a lift. The next lemma records some simple observations about lifts.
\vspace{3 mm}

\begin{lemma}
\label{LiftLemmas}
Let $(X, x_0)$ be an ambit, and let $Y\subseteq S(G)$ be a lift of $(X, x_0)$.
\begin{enumerate}
\item
$\phi_{x_0}|_Y: Y\rightarrow X$ is a strong $G$-map.
\item
$Y\cap S_{x_0}$ is a minimal right ideal of $S_{x_0}$. 
\end{enumerate}
\end{lemma}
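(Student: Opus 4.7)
The plan rests on a single preliminary observation that I would extract first: for any $p \in Y \cap S_{x_0}$, the orbit closure $\overline{pG}$ equals all of $Y$. Indeed $\overline{pG}$ is a subflow of $S(G)$ contained in $Y$ meeting $S_{x_0}$ at $p$, so minimality of $Y$ as a lift forces $\overline{pG} = Y$. A companion fact worth noting right away is $pS(G) = Y$, which follows from continuity of $\lambda_p$ together with density of $G$ in $S(G)$.

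For part (1), continuity and $G$-equivariance of $\phi_{x_0}|_Y$ are inherited automatically, and surjectivity is immediate from the preliminary: $\phi_{x_0}(Y) = \phi_{x_0}(\overline{pG}) = \overline{x_0 G} = X$. The forward direction of the strong-map condition is the routine fact that a continuous $G$-surjection between pre-ambits maps dense orbits to dense orbits. For the converse, I would argue as follows: if $\phi_{x_0}(y) \in \mathcal{A}(X)$, then $\phi_{x_0}(\overline{yG}) = \overline{\phi_{x_0}(y)G} = X$, so in particular $x_0 \in \phi_{x_0}(\overline{yG})$. Hence $\overline{yG}$ is a subflow of $S(G)$ contained in $Y$ that meets $S_{x_0}$, so by minimality of $Y$ we have $\overline{yG} = Y$, i.e.\ $y \in \mathcal{A}(Y)$.

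For part (2), I would first check that $Y \cap S_{x_0}$ is a right ideal of $S_{x_0}$: stability within $S_{x_0}$ under right multiplication is automatic as $S_{x_0}$ is a subsemigroup, and stability within $Y$ uses $pS(G) = Y$. Minimality then reduces to the identity $pS_{x_0} = Y \cap S_{x_0}$ for every $p \in Y \cap S_{x_0}$. One inclusion is immediate, and for the other, given $q \in Y \cap S_{x_0}$, the equality $Y = pS(G)$ lets me write $q = pr$ for some $r \in S(G)$; then $x_0 = x_0 q = x_0(pr) = (x_0 p) r = x_0 r$, so $r \in S_{x_0}$ and $q \in pS_{x_0}$. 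Any nonempty right ideal $I \subseteq Y \cap S_{x_0}$ then contains $pS_{x_0} = Y \cap S_{x_0}$ for any $p \in I$, giving $I = Y \cap S_{x_0}$.

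I do not anticipate a genuine obstacle: the entire argument transfers minimality of $Y$ as a subflow of $S(G)$ meeting $S_{x_0}$ into minimality of $Y \cap S_{x_0}$ as a right ideal of $S_{x_0}$, with continuity of $\lambda_p$ and density of $G$ in $S(G)$ serving as the bridge between the flow structure on $S(G)$ and the algebraic structure of $S_{x_0}$. The closest thing to a subtlety is that the converse in (1) and the minimality in (2) both rely on the same move — forming $\overline{(-)G}$ inside $Y$ and invoking minimality of the lift — so both parts really boil down to one idea plus the preliminary observation.
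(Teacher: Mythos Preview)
Your proof is correct and follows essentially the same approach as the paper's. The only differences are organizational: you extract the preliminary observation $pS(G) = Y$ explicitly up front, and in part (2) you argue the minimality directly (showing $q = pr$ forces $r \in S_{x_0}$) where the paper uses the contrapositive (if $r \notin S_{x_0}$ then $pr \notin S_{x_0}$) and frames it via an arbitrary minimal right ideal $M \subseteq Y \cap S_{x_0}$.
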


\begin{proof}
For item (1), first note that $\phi_{x_0}|_Y$ is surjective since $x_0\in \phi_{x_0}''(Y)$. Let $y\in Y$ be a point with $\phi_{x_0}(y)\in \mathcal{A}(X)$. Then there is $p\in S(G)$ with $\phi_{x_0}(y)p = x_0$. Then $yp\in S_{x_0}$, so in particular $\overline{yG}\cap S_{x_0} \neq \emptyset$. By  the minimality property of lifts, we must have $\overline{yG} = Y$, so $y\in \mathcal{A}(Y)$ is a transitive point.
\vspace{3 mm}

For item (2), certainly $Y\cap S_{x_0}$ is a right ideal of $S_{x_0}$, so suppose $M\subseteq Y\cap S_{x_0}$   is a minimal right ideal of $S_{x_0}$, and let $y\in M$. By the minimality property of lifts, we must have $yS(G) = Y$. Suppose $p\in S(G)\setminus S_{x_0}$. Then $x_0yp = x_0p\neq x_0$, so $yp\not\in S_{x_0}$. It follows that $Y\cap S_{x_0} = y\cdot S_{x_0}\subseteq M$, so $M = Y\cap S_{x_0}$.
\end{proof}
\vspace{3 mm}

The next two propositions show that the choice of lift doesn't matter. The first shows that any two lifts are isomorphic, and the second limits the nature of $G$-maps between lifts.

\begin{prop}
\label{LiftsIso}
Let $(X, x_0)$ be an ambit, and let $Y_0, Y_1\subseteq S(G)$ be two lifts of $(X, x_0)$. Then $Y_0$ and $Y_1$ are isomorphic over $(X, x_0)$, i.e.\ there is an isomorphism $\psi: Y_0\rightarrow Y_1$ so that the following diagram commutes.
\begin{center}
\begin{tikzpicture}[node distance=3cm, auto]
\node (Y_0) {$Y_0$};
\node (Y_1) [right of=Y_0] {$Y_1$};
\node (X) [node distance=1.5cm, right of=Y_0, below of=Y_0] {$X$};
\draw[->] (Y_0) to node {$\psi$} (Y_1);
\draw[->] (Y_0) to node [swap] {$\phi_{x_0}$} (X);
\draw[->] (Y_1) to node {$\phi_{x_0}$} (X);
\end{tikzpicture}
\end{center}
\end{prop}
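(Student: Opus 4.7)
The plan is to build the isomorphism $\psi: Y_0 \to Y_1$ as left multiplication by a carefully chosen element of $S_{x_0}$, exploiting the fact that the intersections $Y_i \cap S_{x_0}$ are minimal right ideals of $S_{x_0}$.

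Setting $M_i := Y_i \cap S_{x_0}$, Lemma \ref{LiftLemmas}(2) makes each $M_i$ a minimal right ideal of the compact left-topological semigroup $S_{x_0}$, and Lemma \ref{LiftLemmas}(1) combined with $x_0 \in \mathcal{A}(X)$ forces $M_i \subseteq \mathcal{A}(Y_i)$. In particular, for any $p \in M_i$ the orbit closure $\overline{pG}$ equals $Y_i$, and hence $p \cdot S(G) = Y_i$.

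The key step is to apply the last bullet of Fact \ref{SemiFacts} twice inside $S_{x_0}$. First I would fix any $q \in M_1$ and obtain $q' \in M_0$ with $u := q'q \in M_0$ an idempotent; then, starting from $q' \in M_0$, I would obtain $q'' \in M_1$ with $v := q''q' \in M_1$ an idempotent. I would then define three continuous $G$-equivariant maps by left multiplication:
\begin{align*}
\psi := \lambda_q|_{Y_0}: Y_0 \to Y_1, \qquad \psi' := \lambda_{q'}|_{Y_1}: Y_1 \to Y_0, \qquad \psi'' := \lambda_{q''}|_{Y_0}: Y_0 \to Y_1.
\end{align*}
Each lands in the claimed codomain because, e.g., $q \cdot Y_0 \subseteq q \cdot S(G) = Y_1$, and each commutes with $\phi_{x_0}$ because its multiplier lies in $S_{x_0}$: for instance $\phi_{x_0}(qy) = (x_0 q)y = x_0 y = \phi_{x_0}(y)$.

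By associativity of the semigroup action, $\psi' \circ \psi = \lambda_u|_{Y_0}$ and $\psi'' \circ \psi' = \lambda_v|_{Y_1}$. Since $u \in M_0$ has dense orbit in $Y_0$, we have $Y_0 = u \cdot S(G)$, so Fact \ref{SemiFacts} bullet 2 gives $uy = y$ for every $y \in Y_0$; thus $\psi' \circ \psi = \mathrm{id}_{Y_0}$, and symmetrically $\psi'' \circ \psi' = \mathrm{id}_{Y_1}$. The purely algebraic observation that an element of a monoid with both a left and a right inverse is invertible (and the two one-sided inverses coincide) then yields $\psi'' = \psi$ and $\psi \circ \psi' = \mathrm{id}_{Y_1}$, so $\psi$ is the desired isomorphism over $(X, x_0)$. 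The main obstacle I anticipate is ensuring that the natural candidate $\lambda_q$ is injective, not just surjective; the minimality clause in the definition of a lift is exactly what upgrades $M_0, M_1$ to minimal right ideals of $S_{x_0}$ and thereby enables the second application of Fact \ref{SemiFacts} producing the auxiliary map $\psi''$ needed to invert $\psi'$.
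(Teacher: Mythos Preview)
Your proof is correct and follows essentially the same approach as the paper: both arguments set $M_i = Y_i \cap S_{x_0}$, use Lemma \ref{LiftLemmas}(2) to identify these as minimal right ideals of $S_{x_0}$, and build the isomorphism as left multiplication by an element of $S_{x_0}$. The only cosmetic difference is in how the inverse is exhibited: the paper picks idempotents $u\in M_0$ and $v\in M_1$ lying in the \emph{same} minimal left ideal of $S_{x_0}$, so that $uv=u$ and $vu=v$ directly make $\lambda_v$ and $\lambda_u$ mutual inverses, whereas you apply the last bullet of Fact \ref{SemiFacts} twice and then use the standard left-inverse/right-inverse trick (which is a category-theoretic fact, not literally a monoid fact, since $\psi'$ goes between different spaces---but the argument $\psi'' = \psi''\circ\psi'\circ\psi = \psi$ works regardless).
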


\begin{proof}
Write $M_i = Y_i\cap S_{x_0}$. Then each $M_i$ is a minimal right ideal of $S_{x_0}$ by item (2) of Lemma \ref{LiftLemmas}. Let $v\in M_1$ be an idempotent. Then the left multiplication $\lambda_v: M_0\rightarrow M_1$ is an isomorphism of right ideals of $S_{x_0}$. Using Fact \ref{SemiFacts}, let $u\in M_0$ be an idempotent in the same minimal left ideal of $S_{x_0}$ as $v$. Then $uv = u$ and $vu = v$. It follows that $\psi := \lambda_v: Y_0\rightarrow Y_1$ is an isomorphism with inverse $\lambda_u$. To check that the diagram commutes, let $y\in Y_0$. Then $y = up$ for some $p\in S(G)$. Then $\phi_{x_0}\circ \lambda_v(up) = \phi_{x_0}(vp) = x_0p = \phi_{x_0}(up)$.
\end{proof}
\vspace{3 mm}

\begin{prop}
\label{LiftsEndo}
Let $(X, x_0)$ be an ambit, and let $Y_0, Y_1\subseteq S(G)$ be two lifts of $(X, x_0)$. Let $\psi: Y_0\rightarrow Y_1$ be a surjective $G$-map making the diagram from Proposition \ref{LiftsIso} commute. Then $\psi$ is an isomorphism.
\end{prop}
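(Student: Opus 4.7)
The plan is to apply Proposition \ref{PreambitSG} to write $\psi$ as a left multiplication $\lambda_w$ for some $w \in M_1 := Y_1 \cap S_{x_0}$, and then use the group structure inside the minimal right ideal $M_1$ to construct a left inverse that reduces the problem to Proposition \ref{LiftsIso}.

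First, I would fix an idempotent $u \in M_0 := Y_0 \cap S_{x_0}$; this exists since, by Lemma \ref{LiftLemmas}(2), $M_0$ is a minimal right ideal of $S_{x_0}$ and hence itself a compact left-topological semigroup. By minimality of $Y_0$ as a lift, the subflow $\overline{uG} \subseteq Y_0$ meets $S_{x_0}$ at $u$, forcing $\overline{uG} = Y_0$, so $u$ has dense orbit. Proposition \ref{PreambitSG} then yields $\psi = \lambda_w$ where $w := \psi(u)$, and the commutativity of the diagram forces $\phi_{x_0}(w) = \phi_{x_0}(u) = x_0$, placing $w$ in $S_{x_0} \cap Y_1 = M_1$.

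Next, I would use the standard consequence of Fact \ref{SemiFacts} that every element of the minimal right ideal $M_1$ lies in a maximal subgroup of the form $M_1 \cap L$ for some minimal left ideal $L$ of $S_{x_0}$ (indeed, $M_1$ is the disjoint union of such groups). Let $v$ be the idempotent identity of the group containing $w$, and let $w^{-1}$ denote its group inverse, so $ww^{-1} = w^{-1}w = v$. Since $v \in M_1$ is idempotent, the same minimality argument gives $Y_1 = vS(G)$, so $\lambda_v$ acts as the identity on $Y_1$; consequently $\lambda_w$ and $\lambda_{w^{-1}}$ restrict to mutually inverse self-bijections of $Y_1$, and in particular $\lambda_{w^{-1}}|_{Y_1}$ is injective.

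Finally, the identity $\lambda_{w^{-1}} \circ \psi = \lambda_{w^{-1}w}|_{Y_0} = \lambda_v|_{Y_0}$ exhibits the composition as the isomorphism $Y_0 \to Y_1$ furnished by Proposition \ref{LiftsIso} applied to the idempotent $v \in M_1$. Since $\lambda_{w^{-1}}|_{Y_1}$ is injective and the composition is injective, $\psi$ itself must be injective; combined with surjectivity and continuity between compact Hausdorff spaces, $\psi$ is a homeomorphism, and its $G$-equivariance promotes it to the required isomorphism of $G$-flows. The main subtlety I expect is keeping track of which elements lie in which minimal right/left ideals and ensuring that the inverse $w^{-1}$ really lives inside $M_1$ rather than merely in $S_{x_0}$; both are handled by the structure theory of compact left-topological semigroups recorded in Fact \ref{SemiFacts}.
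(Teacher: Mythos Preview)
Your proof is correct and shares the same core idea as the paper's: write $\psi = \lambda_w$ with $w = \psi(u) \in M_1$, then find an element whose left multiplication composes with $\lambda_w$ to yield something manifestly injective on $Y_0$. The paper is slightly more direct: it picks $v \in M_0$ with $vw \in M_0$ idempotent (via the last bullet of Fact~\ref{SemiFacts}), so that $\lambda_{vw}$ is the identity on $Y_0 = vw\,S(G)$ and hence $\psi = \lambda_w$ has a left inverse. You instead work inside $M_1$, take the group inverse $w^{-1}$ of $w$ in the maximal subgroup $M_1 \cap S_{x_0}w$, and then invoke Proposition~\ref{LiftsIso} to recognize $\lambda_{w^{-1}w} = \lambda_v|_{Y_0}$ as an isomorphism. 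Both routes are equivalent in spirit; the paper's avoids the backward reference to Proposition~\ref{LiftsIso}, while yours makes the group-theoretic structure of the minimal right ideal more explicit. One minor remark: in your final step, the injectivity of the composition $\lambda_{w^{-1}} \circ \psi$ already forces $\psi$ to be injective---the separate observation that $\lambda_{w^{-1}}|_{Y_1}$ is injective is not needed.
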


\begin{proof}
Once again, write $M_i = Y_i\cap S_{x_0}$. Each $M_i$ is a minimal right ideal of $S_{x_0}$.  Let $u\in M_0$ be an idempotent. Then $\psi = \lambda_{\psi(u)}$ by Proposition \ref{PreambitSG}. As $\psi(u)\in M_1$, use Fact \ref{SemiFacts} to find $v\in M_0$ with $v\psi(u)\in M_0$ an idempotent. Since $v\psi(u)\in M_0$, we have $Y_0 = v\psi(u)S(G)$, so in particular, $\lambda_v\circ \lambda_{\psi(u)}$ is the identity map on $Y_0$. It follows that $\psi$ is an isomorphism.
\end{proof}

We have shown that the lift of any ambit is canonical in the sense of Proposition \ref{LiftsIso}. Remarkably, the lift of any pre-ambit is also canonical; if $X$ is a preambit and $x_0, x_1\in \mathcal{A}(X)$, then the lifts of the ambits $(X, x_0)$ and $(X, x_1)$ will be isomorphic as well. The rest of this section is spent proving this fact; it will not be needed in the proof of Theorem \ref{BRUCA}, but these ideas will be used in section \ref{RPLiftSection}.
\vspace{3 mm}

If $X$ and $Y$ are pre-ambits and $\psi: Y\rightarrow X$ is a strong map, we call $\psi$ a \emph{strong extension} of $X$. A strong extension $\psi_0: Y_0\rightarrow X$ is called \emph{universal} if given any other strong extension $\psi_1: Y_1\rightarrow X$, there is a strong map $\phi: Y_0\rightarrow Y_1$ with $\psi_1\circ \phi = \psi_0$. 

\begin{center}
\begin{tikzpicture}[node distance=3cm, auto]
\node (Y_0) {$Y_0$};
\node (Y_1) [right of=Y_0] {$Y_1$};
\node (X) [node distance=1.5cm, right of=Y_0, below of=Y_0] {$X$};
\draw[->] (Y_0) to node {$\phi$} (Y_1);
\draw[->] (Y_0) to node [swap] {$\psi_0$} (X);
\draw[->] (Y_1) to node {$\psi_1$} (X);
\end{tikzpicture}
\end{center}

If $i< 2$ and $\psi_i: Y_i\rightarrow X$ are two strong extensions, we say that $\psi_0$ and $\psi_1$ are \emph{isomorphic over $X$} if there is an isomorphism $\phi: Y_0\rightarrow Y_1$ with $\psi_1\circ \phi = \psi_0$.
\vspace{3 mm}

\begin{theorem}
\label{PreambitLift}
Let $X$ be a pre-ambit. Then there is a universal strong extension $\psi_X: S(X)\rightarrow X$. Any two universal strong extensions are isomorphic over $X$.
\end{theorem}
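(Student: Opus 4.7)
The plan is to construct $S(X)$ as a lift of an ambit built from $X$, paralleling the canonical-lift development for ambits. Choose any point $x_0 \in \mathcal{A}(X)$ (possible because $X$ is a pre-ambit), and let $S(X) \subseteq S(G)$ be a lift of the ambit $(X, x_0)$ in the sense of Definition \ref{LiftAmbit}. Set $\psi_X := \phi_{x_0}|_{S(X)}$; Lemma \ref{LiftLemmas}(1) already tells us that this map is a strong $G$-map, so $\psi_X$ is a strong extension of $X$.

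For universality, fix any strong extension $\psi_1 : Y_1 \to X$. By Lemma \ref{LiftLemmas}(2), $S(X) \cap S_{x_0}$ is a minimal right ideal of $S_{x_0}$, so Ellis--Numakura (Fact \ref{SemiFacts}) supplies an idempotent $u \in S(X) \cap S_{x_0}$, and minimality of the lift forces $S(X) = \overline{uG}$. Since $\psi_1$ is surjective there is $y_1 \in Y_1$ with $\psi_1(y_1) = x_0$, and strength of $\psi_1$ yields $y_1 \in \mathcal{A}(Y_1)$, giving the ambit map $\phi_{y_1} : S(G) \to Y_1$. Define $\phi := \phi_{y_1}|_{S(X)}$. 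The identity $\psi_1(\phi(y)) = \psi_1(y_1) \cdot y = x_0 \cdot y = \psi_X(y)$ gives $\psi_1 \circ \phi = \psi_X$. For strength, first observe $\psi_1(\phi(u)) = x_0 \cdot u = x_0 \in \mathcal{A}(X)$, so strength of $\psi_1$ gives $\phi(u) \in \mathcal{A}(Y_1)$ and hence $\phi(S(X)) = \overline{\phi(u)G} = Y_1$. This surjectivity handles the forward direction of strength directly; for the reverse, if $\phi(y) \in \mathcal{A}(Y_1)$ then strength of $\psi_1$ gives $\psi_X(y) = \psi_1(\phi(y)) \in \mathcal{A}(X)$, and strength of $\psi_X$ then gives $y \in \mathcal{A}(S(X))$.

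For uniqueness, suppose $\psi' : S'(X) \to X$ is any other universal strong extension. Universality in each direction produces strong maps $\phi_1 : S(X) \to S'(X)$ and $\phi_2 : S'(X) \to S(X)$ over $X$. Their composite $\phi_2 \circ \phi_1$ is a strong (hence surjective) $G$-map $S(X) \to S(X)$ satisfying $\psi_X \circ \phi_2 \circ \phi_1 = \psi_X$. Since $S(X)$ was constructed as a lift of $(X, x_0)$, Proposition \ref{LiftsEndo}, applied with both lifts taken to be $S(X)$, forces $\phi_2 \circ \phi_1$ to be an isomorphism. Hence $\phi_1$ is injective and, being a strong map between pre-ambits, also surjective, so $\phi_1$ is an isomorphism over $X$. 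The main obstacle in the argument is the bidirectional strength verification for $\phi$: the forward direction needs the idempotent $u$ to push $\phi$'s image up to all of $Y_1$, while the reverse direction chains the strength of $\psi_1$ with that of $\psi_X$.
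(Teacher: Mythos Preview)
Your proof is correct and follows the same overall strategy as the paper: take $S(X)$ to be a lift of $(X,x_0)$, build the factor map as $\phi_{y_1}|_{S(X)}$, and use Proposition~\ref{LiftsEndo} for uniqueness. The only real difference is in how strength of $\phi$ is checked: the paper observes that $S_{y_1}\subseteq S_{x_0}$ and then shows $S(X,x_0)$ is in fact a lift of $(Y_1,y_1)$, so strength of $\phi_{y_1}|_{S(X)}$ drops out of Lemma~\ref{LiftLemmas}(1) for free; you instead verify strength of $\phi$ directly by chaining the strength of $\psi_1$ and $\psi_X$. Both routes work; the paper's gives the slightly stronger byproduct that the lift is independent of which strong extension you start from, while yours avoids the ``we may assume $S(X,x_0)\subseteq S(Y,y_0)$'' step.
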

\vspace{3 mm}

We call the pre-ambit $S(X)$ given by Theorem \ref{PreambitLift} the \emph{universal strong extension} of $X$. We have previously used the notaion $S(X)$ to denote the Samuel compactification, but the context should always be clear. The next two propositions will prove Theorem \ref{PreambitLift}. The first produces a universal strong extension of any pre-ambit, and the second shows uniqueness.

The following notation will be useful. If $(X, x_0)$ is an ambit, we let $S(X, x_0)\subseteq S(G)$ be a lift of $X$. 
\vspace{3 mm}

\begin{prop}
Let $X$ be a pre-ambit, and let $x_0\in \mathcal{A}(X)$. Then $\phi_{x_0}|_{S(X, x_0)}: S(X, x_0)\rightarrow X$ is a universal strong extension.
\end{prop}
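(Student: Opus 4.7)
The strong-map property of $\phi_{x_0}|_{S(X, x_0)}$ is already Lemma \ref{LiftLemmas}(1), so the substance of the proposition is universality: given any strong extension $\psi_1 \colon Y_1 \to X$, I need to produce a strong $G$-map $\phi \colon S(X, x_0) \to Y_1$ with $\psi_1 \circ \phi = \phi_{x_0}|_{S(X, x_0)}$.

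The plan is to route everything through the greatest ambit. Since every strong map between pre-ambits is surjective, pick $y_1 \in \psi_1^{-1}(x_0)$; strongness of $\psi_1$ together with $x_0 \in \mathcal{A}(X)$ forces $y_1 \in \mathcal{A}(Y_1)$, so $(Y_1, y_1)$ is an ambit. Let $\phi_{y_1} \colon S(G) \to Y_1$ denote the unique map of ambits from $(S(G), 1_G)$. The composition $\psi_1 \circ \phi_{y_1}$ is also a map of ambits from $(S(G), 1_G)$ to $(X, x_0)$, so by uniqueness of such maps it must equal $\phi_{x_0}$. I then set $\phi := \phi_{y_1}|_{S(X, x_0)}$; the triangle commutes by construction.

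What remains is to verify that $\phi$ is strong. Pick any $q \in S(X, x_0) \cap S_{x_0}$, which is nonempty by the defining property of a lift. Then $\psi_1(\phi_{y_1}(q)) = \psi_1(y_1) \cdot q = x_0 \cdot q = x_0$, so strongness of $\psi_1$ forces $\phi_{y_1}(q) \in \mathcal{A}(Y_1)$. Hence $\phi(S(X, x_0))$ is a closed $G$-invariant subset of $Y_1$ containing a transitive point of $Y_1$, so $\phi$ is surjective. The forward direction of strongness is then immediate since a surjective $G$-map between pre-ambits sends transitive points to transitive points. For the reverse direction, if $\phi(p) \in \mathcal{A}(Y_1)$, then $\phi_{x_0}(p) = \psi_1(\phi(p)) \in \mathcal{A}(X)$ by strongness of $\psi_1$, and Lemma \ref{LiftLemmas}(1) applied to $\phi_{x_0}|_{S(X, x_0)}$ yields $p \in \mathcal{A}(S(X, x_0))$.

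The one mildly delicate step is surjectivity of $\phi$: one must remember that the defining property $S(X, x_0) \cap S_{x_0} \neq \emptyset$ of a lift is precisely what lets us upgrade the transitivity of $1_G \in S(G)$ into a transitive point of $Y_1$ that visibly lies in $\phi_{y_1}(S(X, x_0))$. Once that is observed, strongness of $\phi$ is a pure diagram chase, with Lemma \ref{LiftLemmas}(1) and strongness of $\psi_1$ supplying the two nontrivial implications.
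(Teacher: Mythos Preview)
Your proof is correct and follows the same overall strategy as the paper: pick $y_1\in\psi_1^{-1}(x_0)$, note $y_1\in\mathcal{A}(Y_1)$, and take $\phi=\phi_{y_1}|_{S(X,x_0)}$. The only difference is in how strongness of $\phi$ is established: the paper first argues (after choosing a lift $S(Y_1,y_1)\supseteq S(X,x_0)$, possible since $S_{y_1}\subseteq S_{x_0}$) that in fact $S(X,x_0)=S(Y_1,y_1)$, so that strongness comes directly from Lemma~\ref{LiftLemmas}(1) applied to $(Y_1,y_1)$; you instead verify strongness by a direct diagram chase using Lemma~\ref{LiftLemmas}(1) for $(X,x_0)$ together with strongness of $\psi_1$. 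Your route avoids the auxiliary choice of $S(Y_1,y_1)$ and the equality of lifts, while the paper's route records the extra structural fact that the two lifts coincide.
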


\begin{proof}
Let $Y$ be a pre-ambit, and fix a strong map $\psi: Y\rightarrow X$. Pick $y_0\in Y$ with $\psi(y_0) = x_0$. Then $y_0\in \mathcal{A}(Y)$, and $S_{y_0}\subseteq S_{x_0}$. So we may assume that $S(X, x_0)\subseteq S(Y, y_0)$.

We will show that $S(X, x_0) = S(Y, y_0)$. Using the minimality property of lifts, it suffices to show that $\phi_{y_0}''(S(X, x_0))\cap \mathcal{A}(Y)\neq \emptyset$. Let $p\in S(X, x_0)\cap S_{x_0}$, towards showing that $\phi_{y_0}(p)\in\mathcal{A}(Y)$. First note that $\psi\circ \phi_{y_0} = \phi_{x_0}$. Then $\phi_{x_0}(p) = \psi\circ \phi_{y_0}(p) = \psi(y_0p) = x_0$. Since $\psi$ is strong, we have $y_0p = \phi_{y_0}(p)\in A(Y)$. 

Putting everything together, we now have $S(X, x_0) = S(Y, y_0)$, and $\phi_{y_0}: S(X, x_0)\rightarrow Y$ is a strong map with $\psi\circ \phi_{y_0} = \phi_{x_0}$.
\end{proof}
\vspace{3 mm}

\begin{prop}
Let $X$ be a pre-ambit. Then any two universal strong extensions are isomorphic over $X$. 
\end{prop}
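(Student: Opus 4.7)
My plan is to leverage the previous proposition, which realizes one concrete universal strong extension inside $S(G)$: for any choice of $x_0 \in \mathcal{A}(X)$, the map $\phi_{x_0}|_{S(X, x_0)}\colon S(X, x_0) \to X$ is a universal strong extension. It therefore suffices to show that an arbitrary universal strong extension $\psi\colon Y \to X$ is isomorphic over $X$ to this particular one; the general statement then follows by transitivity of isomorphism.

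To produce such an isomorphism, I would extract maps in both directions from the two universal properties. Universality of $\phi_{x_0}|_{S(X, x_0)}$ applied to $\psi$ yields a strong $G$-map $a\colon S(X, x_0) \to Y$ with $\psi \circ a = \phi_{x_0}|_{S(X, x_0)}$, and universality of $\psi$ applied to $\phi_{x_0}|_{S(X, x_0)}$ yields a strong $G$-map $b\colon Y \to S(X, x_0)$ with $\phi_{x_0}|_{S(X, x_0)} \circ b = \psi$. The composite $b \circ a\colon S(X, x_0) \to S(X, x_0)$ is then a surjective $G$-map (strong maps are surjective, and so are their composites) that makes the triangle over $X$ with $\phi_{x_0}|_{S(X, x_0)}$ commute.

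At this point I would invoke Proposition \ref{LiftsEndo} with both lifts taken to be $S(X, x_0)$, which immediately forces $b \circ a$ to be an isomorphism. This gives that $a$ is injective, and since $a$ is also surjective (being strong) and continuous between compact Hausdorff spaces, $a$ is a homeomorphism, hence an isomorphism of $G$-flows over $X$. I do not foresee a real obstacle in this plan, as it is a Yoneda-style argument; the only step that needs care is verifying that $b \circ a$ is a surjective $G$-map commuting with $\phi_{x_0}|_{S(X, x_0)}$, but this is a formal consequence of how $a$ and $b$ were chosen, and after that Proposition \ref{LiftsEndo} supplies the essential input.
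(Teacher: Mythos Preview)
Your proposal is correct and follows essentially the same route as the paper: fix a concrete universal strong extension $\phi_{x_0}|_{S(X,x_0)}$, use the two universal properties to get maps in both directions, and apply Proposition~\ref{LiftsEndo} to the resulting self-map of $S(X,x_0)$. You are in fact slightly more explicit than the paper, which stops after noting that the composite $\alpha = b\circ a$ is an isomorphism; you correctly spell out why this forces $a$ itself to be an isomorphism (injectivity from $b\circ a$, surjectivity from strongness, hence a homeomorphism between compact Hausdorff spaces).
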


\begin{proof}
Let $x_0\in \mathcal{A}(X)$, and form $\phi_{x_0}: S(X, x_0)\rightarrow X$. Suppose that $\psi: Y\rightarrow X$ were another universal strong extension. By using the universal property of each map and composing, we obtain a strong map $\alpha: S(X, x_0)\rightarrow S(X, x_0)$ with $\phi_{x_0} = \phi_{x_0}\circ \alpha$. It is enough to note that $\alpha$ must be an isomorphism, and this follows by Proposition \ref{LiftsEndo}
\end{proof}

\section{Sets and colorings}
\label{SetsAndColorings}

This section covers the combinatorial content needed going forward. Fix a \fr structure $\mathbf{K} = \flim{\mathcal{K}} = \bigcup_n \mathbf{A}_n$. Set $G = \aut{\mathbf{K}}$, and let $\widehat{G} = \emb{\mathbf{K}}$ be the left completion. Recall that we have set $H_m := \emb{\mathbf{A}_m, \mathbf{K}}$ and $H_m^n := \emb{\mathbf{A}_m, \mathbf{A}_n}$

In the introduction, we saw that if $k, m < \omega$, then $k^{H_m}$, the space of $k$-colorings of $H_m$, has a natural $G$-flow structure. The case $k = 2$ deserves a special mention, as we will freely identify $S\subseteq H_m$ with its characteristic function $\chi_S\in 2^{H_m}$. If $S\in H_m$ and $\eta\in \widehat{G}$, we have $\chi_S\cdot \eta = \chi_{\eta^{-1}(S)}$, where $\eta^{-1}(S) := \{f\in H_m: \eta\cdot f\in S\}$. It is helpful to think of $\eta$ as defining a ``copy'' of $\mathbf{K}$ inside $\mathbf{K}$, and the operation $S\rightarrow \eta^{-1}(S)$ ``zooms in'' on that copy. For example, if $S = \{\eta\cdot f: f\in H_m\}$, then $\eta^{-1}(S) = H_m$.

Recall from subsection \ref{FraisseSG} that if $S\subseteq H_m$ and $\alpha\in S(G)$, we set $\alpha^{-1}(S) = \{f\in H_m: S\in \alpha\cdot f\}$.  
\vspace{3 mm}

\begin{prop}
If $\alpha\in S(G)$ and $S\subseteq H_m$, we have $\chi_S\cdot \alpha = \chi_{\alpha^{-1}(S)}$. 
\end{prop}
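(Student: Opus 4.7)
The plan is to verify the identity on a dense subset and then extend by continuity, which fits the general pattern that the extended $S(G)$-action on any $G$-flow is the unique continuous extension of the $G$-action.

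First I would check that both sides are continuous functions of $\alpha \in S(G)$, taking values in $2^{H_m}$ equipped with the product (pointwise convergence) topology. The left-hand side $\alpha \mapsto \chi_S \cdot \alpha$ is just the ambit map $\phi_{\chi_S} : S(G) \to \overline{\chi_S \cdot G}$, which is continuous by definition of how $S(G)$ acts on $G$-flows. For the right-hand side, it suffices to check that for each fixed $f \in H_m$, the map $\alpha \mapsto \chi_{\alpha^{-1}(S)}(f)$ from $S(G)$ to $\{0,1\}$ is continuous. Unpacking the definition, $\chi_{\alpha^{-1}(S)}(f) = 1$ iff $S \in \alpha \cdot f$, i.e.\ iff $\alpha \cdot f$ belongs to the clopen subset $\{p \in \beta H_m : S \in p\}$. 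Since $\rho_f : S(G) \to \beta H_m$ sending $\alpha \mapsto \alpha \cdot f$ is continuous (as recorded in subsection \ref{FraisseSG}), the preimage of this clopen set under $\rho_f$ is clopen in $S(G)$, giving continuity.

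Next I would verify the identity on $G \subseteq S(G)$. For $g \in G$ and $f \in H_m$, by the definition of the $G$-action on $2^{H_m}$ we have $(\chi_S \cdot g)(f) = \chi_S(g \cdot f) = 1$ iff $g \cdot f \in S$ iff $f \in g^{-1}(S)$; and $(\chi_{g^{-1}(S)})(f) = 1$ iff $f \in g^{-1}(S) = \{h \in H_m : S \in g \cdot h\}$, which (since $g \cdot h \in H_m \subseteq \beta H_m$ is principal) is the same condition $g \cdot f \in S$. So the two sides agree on $G$.

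Finally, since $G$ is dense in $S(G)$ and both sides are continuous $S(G) \to 2^{H_m}$, they coincide on all of $S(G)$. There is no real obstacle here; the only substantive point is the continuity of the right-hand side, and that reduces immediately to the continuity of each $\rho_f$ together with the clopenness of $\{p \in \beta H_m : S \in p\}$ in $\beta H_m$.
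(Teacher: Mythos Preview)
Your proof is correct and follows essentially the same approach as the paper: both use density of $G$ in $S(G)$ together with continuity of the maps $\rho_f$ and of the $G$-flow action on $2^{H_m}$. The only difference is organizational: the paper fixes $f\in H_m$, takes a net $g_i\to\alpha$, and runs the continuity and agreement-on-$G$ arguments simultaneously in a single pointwise computation, whereas you separate these into two explicit steps before invoking density.
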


\begin{proof}
Fix $f\in H_m$, and let $g_i\in G$ be a net with $g_i\rightarrow \alpha$. Then $g_i\cdot f\rightarrow \alpha\cdot f$. It follows that eventually $S\in \alpha\cdot f$ iff $g_i\cdot f\in S$, so eventually $f\in \alpha^{-1}(S)$ iff $\chi_S\cdot g_i(f) = 1$. But eventually $\chi_S\cdot g_i(f) = \chi_S \cdot \alpha(f)$.
\end{proof}
\vspace{3 mm}

We now define the key combinatorial notions we will need going forward. Many of these notions describe properties that a subset $S\subseteq H_m$ may or may not have. In the spirit of identifying $\mathcal{P}(H_m)$ with $2^{H_m}$, we will say that $\gamma\in 2^{H_m}$ has one of these properties iff $\{f\in H_m: \gamma(f) = 1\}$ has the property.
\vspace{3 mm} 

\begin{defin}\mbox{}
\label{ComboDef}
\begin{enumerate}
\item
A set $S\subseteq H_m$ is called \emph{large} if for some $\eta\in \widehat{G}$, we have $\eta^{-1}(S) = H_m$.

\item
A set $S\subseteq H_m$ is called \emph{unavoidable} if $H_m\setminus S$ is not large. Equivalently, $S$ is large if for every $\eta\in \widehat{G}$, we have $\eta^{-1}(S)\neq \emptyset$.

\item
A set $S\subseteq H_m$ is called \emph{somewhere unavoidable} if for some $\eta\in \widehat{G}$, we have $\eta^{-1}(S)$ unavoidable.

\item
A set $S\subseteq H_m$ is called \emph{scattered} if $S$ is not somewhere unavoidable.

\item
Fix $k\leq r < \omega$, and let $\gamma: H_m\rightarrow r$ be a coloring. We call $\gamma$ an \emph{unavoidable $k$-coloring} if $|\im{\gamma}| = k$ and for each $i< r$, we have $\gamma^{-1}(\{i\})\subseteq H_m$ either empty or unavoidable. We call $\gamma$ an \emph{unavoidable coloring} if $\gamma$ is an unavoidable $k$-coloring for some $k\leq r$.
\end{enumerate}
\end{defin}
\vspace{3 mm}

\begin{rem}
The term ``scattered'' comes from the theory of linear orders. A linear order $X$ is called \emph{scattered} if $X$ does not embed $\mathbb{Q}$. We can think of points as embeddings of the singleton linear order; if $\mathbf{A}_1\subseteq \mathbb{Q}$ is a single point, then $S\subseteq H_1$ is scattered in the traditional sense iff it is scattered in our sense.
\end{rem}
\vspace{3 mm}

The next few propositions investigate how objects with these properties behave under images and preimages of the dual maps defined in subsection \ref{FraisseSG}. Recall that if $f\in H_m^n$, we set $\hatf: H_n\rightarrow H_m$ given by $\hatf(s) = s\circ f$. If $S\subseteq H_m$, we often write $f^n(S) := \hatf^{-1}(S)$, or just $f(S)$ if there is no ambiguity.
\vspace{3 mm}

\begin{lemma}
Let $m\leq n<\omega$. Fix $f\in H_m^n$. If $S\subseteq H_m$ and $\eta\in \widehat{G}$, then $\eta^{-1}(f(S)) = f(\eta^{-1}(S))$.
\end{lemma}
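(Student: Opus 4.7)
The plan is a direct unpacking of the definitions; both sides will reduce to the same subset of $H_n$, namely $\{h\in H_n : \eta\circ h\circ f\in S\}$, so the only content is associativity of composition of embeddings.

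First I would fix $h\in H_n$ and compute membership in each side. On the left, using the definition of $\eta^{-1}(\cdot)$ applied to a subset of $H_n$, we have $h\in \eta^{-1}(f(S))$ iff $\eta\cdot h\in f(S)$. Since the left $\widehat{G}$-action on $H_n$ is by postcomposition, $\eta\cdot h = \eta\circ h$, and by the definition $f(S)=\hatf^{-1}(S)$ this is equivalent to $(\eta\circ h)\circ f\in S$. On the right, $h\in f(\eta^{-1}(S))=\hatf^{-1}(\eta^{-1}(S))$ means $\hatf(h)=h\circ f\in \eta^{-1}(S)$, which by definition of $\eta^{-1}$ applied to a subset of $H_m$ means $\eta\cdot(h\circ f) = \eta\circ(h\circ f) \in S$. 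Associativity of composition now gives the equality of the two sides.

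I do not anticipate any obstacle: the only thing to keep straight is that $\eta^{-1}$ is being used at two different levels (once on $H_n$ via $f(S)\subseteq H_n$, once on $H_m$ via $S\subseteq H_m$), but in both cases its action is by postcomposition with $\eta$, so associativity does the whole job. No appeal to nets, the semigroup structure on $S(G)$, or any amalgamation is needed here; this lemma is really a bookkeeping statement that the dual maps $\hatf$ commute with the $\widehat{G}$-action, and will be invoked later to transport largeness and unavoidability through the dual maps.
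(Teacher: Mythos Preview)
Your proof is correct and is essentially identical to the paper's own argument: the paper also unwinds the definitions in a chain of set equalities, using $(\eta\cdot s)\circ f = \eta\cdot(s\circ f)$ as the sole nontrivial step. Your element-wise biconditional presentation is exactly the same content.
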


\begin{proof}
\begin{align*}
\\[-15 mm]
\eta^{-1}(f(S)) &= \{s\in H_n: \eta\cdot s\in f(S)\}\\
&= \{s\in H_n: (\eta\cdot s)\circ f\in S\}\\
&= \{s\in H_n: \eta\cdot (s\circ f) \in S\}\\
&= \{s\in H_n: s\circ f\in \eta^{-1}(S)\}\\
&= f(\eta^{-1}(S))\qedhere
\end{align*}
\end{proof}
\vspace{3 mm}

\begin{lemma}
\label{ComboLevels}
Let $m\leq n <\omega$. Fix $f\in H_m^n$. 
\begin{enumerate}
\item
Let $S\subseteq H_m$. Then $S$ has any of the properties from Definition \ref{ComboDef} (1)-(4) iff $f(S)$ has the corresponding property. 

\item
Let $S\subseteq H_n$. If $S$ has any of the properties from Definition \ref{ComboDef} (1)-(3), then $\hatf(S)$ also has the corresponding property. If $\hatf(S)$ is scattered, then $S$ is scattered.
\end{enumerate}
\end{lemma}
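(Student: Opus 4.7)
\medskip

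The proof rests on two identities. The first, which is the preceding lemma, is $\eta^{-1}(f(S)) = f(\eta^{-1}(S))$ for $S \subseteq H_m$; this is an equality and powers part (1). The second, which drives part (2), is the one-sided inclusion
$$\hatf(\eta^{-1}(T)) \subseteq \eta^{-1}(\hatf(T))$$
for $T \subseteq H_n$, and follows from the $\widehat{G}$-equivariance of the dual map: $\hatf(\eta\cdot s) = (\eta\circ s)\circ f = \eta\cdot\hatf(s)$. I will also use that $\hatf$ is surjective (Proposition \ref{Amalgamation}(1)), so $f(\,\cdot\,) = \hatf^{-1}(\,\cdot\,) : \mathcal{P}(H_m)\to \mathcal{P}(H_n)$ is injective, sends $H_m$ to $H_n$, and preserves nonemptiness.

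For part (1) I would take the four properties in order. Since $\eta^{-1}(S) = H_m$ iff $f(\eta^{-1}(S)) = H_n$ iff $\eta^{-1}(f(S)) = H_n$, and the analogous chain holds for nonemptiness, the ``large'' and ``unavoidable'' equivalences are immediate from the identity above. ``Somewhere unavoidable'' then follows by applying the ``unavoidable'' equivalence to the subset $\eta^{-1}(S) \subseteq H_m$ and observing $f(\eta^{-1}(S)) = \eta^{-1}(f(S))$, so the witnessing $\eta$ transfers. ``Scattered'' is just the negation of ``somewhere unavoidable''.

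Part (2) proceeds through the same four properties, but now with the one-sided inclusion in place of the identity, which matches the asymmetric statement. For ``large'', if $\eta^{-1}(T) = H_n$ then surjectivity of $\hatf$ gives $H_m = \hatf(H_n) = \hatf(\eta^{-1}(T)) \subseteq \eta^{-1}(\hatf(T))$, forcing equality. For ``unavoidable'', nonemptiness of each $\eta^{-1}(T)$ passes through $\hatf$ into $\eta^{-1}(\hatf(T))$. For ``somewhere unavoidable'', if $\eta^{-1}(T)$ is unavoidable, the unavoidable case just proved applied to $\eta^{-1}(T) \subseteq H_n$ yields that $\hatf(\eta^{-1}(T))$ is unavoidable; since $\hatf(\eta^{-1}(T)) \subseteq \eta^{-1}(\hatf(T))$ and any superset of an unavoidable set is unavoidable, $\eta^{-1}(\hatf(T))$ is unavoidable as well, so $\hatf(T)$ is somewhere unavoidable. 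The ``scattered'' implication in (2) is exactly the contrapositive of this last step.

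No step is hard; the only conceptual point, and the reason (2) is phrased asymmetrically to (1), is that the inclusion $\hatf(\eta^{-1}(T)) \subseteq \eta^{-1}(\hatf(T))$ can be strict. So while being somewhere unavoidable transfers from $T$ to $\hatf(T)$, being scattered (its negation) only transfers back, from $\hatf(T)$ to $T$.
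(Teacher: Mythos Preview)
Your proof is correct. Part (1) matches the paper's argument essentially verbatim: both run on the identity $\eta^{-1}(f(S)) = f(\eta^{-1}(S))$ together with surjectivity of $\hatf$.

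For part (2) you take a genuinely different route. You introduce the one-sided inclusion $\hatf(\eta^{-1}(T)) \subseteq \eta^{-1}(\hatf(T))$ and then argue each property separately, pushing forward through $\hatf$ and using upward-closure of ``unavoidable.'' The paper instead observes a single containment, $S \subseteq f(\hatf(S))$ for $S \subseteq H_n$, notes that properties (1)--(3) are upward closed while ``scattered'' is downward closed, and then simply invokes part (1) applied to $\hatf(S) \subseteq H_m$. This reduces part (2) to a two-line remark with no case analysis. Your approach is more hands-on and makes the equivariance of $\hatf$ explicit, which is arguably more illuminating about \emph{why} the asymmetry arises; the paper's approach is more economical and avoids re-proving anything. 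Both are perfectly valid.
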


\begin{proof}
First let $S\subseteq H_m$ be large, and fix $\eta\in \widehat{G}$ with $\eta^{-1}(S) = H_m$. Then $\eta^{-1}(f(S)) = f(\eta^{-1}(S)) = f(H_m) = H_n$. Conversely, assume $f(S)$ is large, and find $\eta\in \widehat{G}$ with $\eta^{-1}(f(S)) = H_n$. But then $f(\eta^{-1}(S)) = H_n$. Since $\hatf$ is surjective, we must have $\eta^{-1}(S) = H_m$, so $S$ is large.
\vspace{3 mm}

The statement in part (1) of the lemma for ``unavoidable'' follows immediately. 
\vspace{3 mm}

Now let $S\subseteq H_m$ be somewhere unavoidable. Find $\eta\in \widehat{G}$ with $\eta^{-1}(S)$ unavoidable. Then $\eta^{-1}(f(S)) = f(\eta^{-1}(S))$ is unavoidable, so $f(S)$ is somewhere unavoidable. Conversely, assume $f(S)$ is somewhere unavoidable, and find $\eta\in \widehat{G}$ with $\eta^{-1}(f(S))$ unavoidable. But $\eta^{-1}(f(S)) = f(\eta^{-1}(S))$, so $\eta^{-1}(S)$ is unavoidable, and $S$ is somewhere unavoidable.
\vspace{3 mm}

The statement in part (1) of the lemma for ``scattered'' follows immediately.
\vspace{3 mm}

Now let $S\subseteq H_n$. Note that the properties (1)-(3) from Definition \ref{ComboDef} are closed upwards, while being scattered is closed downwards. Noting that $f(\hatf(S))\supseteq S$, we are done by part (1) of the lemma.
\end{proof}
\vspace{3 mm}

\begin{cor}
Let $m\leq n < \omega$, let $k< \omega$, and let $\gamma: H_m\rightarrow k$ be an unavoidable coloring. If $f\in H_m^n$, then $\gamma\circ \hatf: H_n\rightarrow k$ is also  unavoidable.
\end{cor}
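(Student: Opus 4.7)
The plan is to unwind the definition of ``unavoidable coloring'' for $\gamma \circ \hatf$ color class by color class, and reduce each requirement to Lemma \ref{ComboLevels}(1).

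First I would check that $\gamma \circ \hatf$ has image of size exactly $k$. This is immediate from the surjectivity of $\hatf: H_n \to H_m$ (Proposition \ref{Amalgamation}(1)): the image of $\gamma \circ \hatf$ coincides with the image of $\gamma$, which has size $k$ by assumption.

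Next, for each $i \in \im{\gamma}$, I want to show that $(\gamma \circ \hatf)^{-1}(\{i\})$ is unavoidable in $H_n$. The key observation is the identity
\begin{align*}
(\gamma \circ \hatf)^{-1}(\{i\}) &= \{s \in H_n : \gamma(s \circ f) \in \{i\}\} \\
&= \{s \in H_n : s \circ f \in \gamma^{-1}(\{i\})\} \\
&= \hatf^{-1}(\gamma^{-1}(\{i\})) \\
&= f(\gamma^{-1}(\{i\})),
\end{align*}
using the notational convention $f(S) := \hatf^{-1}(S)$ introduced just before Lemma \ref{ComboLevels}. Since $\gamma$ is an unavoidable $k$-coloring, the set $\gamma^{-1}(\{i\})$ is unavoidable in $H_m$. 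Applying Lemma \ref{ComboLevels}(1) to the property ``unavoidable,'' we conclude that $f(\gamma^{-1}(\{i\}))$ is unavoidable in $H_n$.

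Thus every non-empty color class of $\gamma \circ \hatf$ is unavoidable, and the image has size $k$, so $\gamma \circ \hatf$ is an unavoidable $k$-coloring. There is no real obstacle here; the proof is a one-line application of Lemma \ref{ComboLevels}(1) once one notes that preimage under $\hatf$ is exactly the operation $f(\cdot)$ whose behavior the lemma already controls.
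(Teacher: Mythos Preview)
Your proof is correct and is exactly the argument the paper intends; the corollary is stated without proof precisely because it follows immediately from Lemma~\ref{ComboLevels}(1) in the way you describe. One tiny wording fix: the hypothesis is only that $\gamma$ is an \emph{unavoidable coloring}, i.e.\ an unavoidable $k'$-coloring for some $k'\leq k$, so $|\mathrm{Im}(\gamma)|$ need not equal $k$; but your argument works verbatim with $k$ replaced by $|\mathrm{Im}(\gamma)|$, and the conclusion only asks that $\gamma\circ\hatf$ be unavoidable.
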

\vspace{3 mm}

\begin{cor}
Let $m\leq n< \omega$. If $\mathbf{A}_m$ and $\mathbf{A}_n$ have finite big Ramsey degrees $R_m$ and $R_n$, respectively, then $R_m \leq R_n$.
\end{cor}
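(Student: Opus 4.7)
The plan is to reduce any coloring of $H_m$ to a coloring of $H_n$ by pulling back along the dual of the inclusion embedding $i_m^n \in H_m^n$.

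Fix $r > R_n$ and a coloring $\gamma: H_m \rightarrow r$. Form the composite coloring $\gamma' := \gamma \circ \hati{m}{n}: H_n \rightarrow r$. By the definition of $R_n$, there exists $\eta \in \widehat{G} = \emb{\mathbf{K}}$ such that $|\{\gamma'(\eta \circ s) : s \in H_n\}| \leq R_n$.

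For each $s \in H_n$, we have $\gamma'(\eta \circ s) = \gamma((\eta \circ s) \circ i_m^n) = \gamma(\eta \circ (s \circ i_m^n))$. By Proposition \ref{Amalgamation}(1), the dual map $\hati{m}{n}: H_n \rightarrow H_m$ is surjective, so as $s$ ranges over $H_n$, the element $s \circ i_m^n$ ranges over all of $H_m$. Therefore $\{\gamma(\eta \circ h) : h \in H_m\} = \{\gamma'(\eta \circ s) : s \in H_n\}$, which has cardinality at most $R_n$. This shows $\mathbf{K} \rightarrow (\mathbf{K})^{\mathbf{A}_m}_{r, R_n}$ for every $r > R_n$, and the minimality clause in the definition of $R_m$ yields $R_m \leq R_n$.

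There is no genuine obstacle here; once one views the dual map as a means of transporting colorings between levels of the exhaustion, the entire argument is a direct appeal to its surjectivity (a consequence of amalgamation) together with the definition of big Ramsey degree.
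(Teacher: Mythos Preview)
Your proof is correct. The paper leaves this corollary unproved, but its placement immediately after the corollary that unavoidable colorings of $H_m$ pull back to unavoidable colorings of $H_n$ suggests the intended argument runs through the \emph{lower bound} side of the definition: an unavoidable $R_m$-coloring of $H_m$ lifts to an unavoidable $R_m$-coloring of $H_n$, forcing $R_m\leq R_n$ since $R_n$ is the maximal size of such a coloring. You instead use the \emph{upper bound} side directly: lift an arbitrary coloring, apply the defining property of $R_n$ to obtain $\eta$, then push the conclusion back down via surjectivity of $\hati{m}{n}$. Both routes hinge on the same tool (the dual map and Proposition~\ref{Amalgamation}(1)); yours has the minor advantage of not needing the intermediate corollary on preservation of unavoidability, while the paper's route makes the structural reason (unavoidable colorings transfer upward) more visible.
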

\vspace{3 mm}

We now turn our attention to colorings. We call $\gamma$ a \emph{coloring of $H_m$} if $\gamma \in k^{H_m}$ for some $k< \omega$.
\vspace{3 mm}

\begin{defin}\mbox{}
\label{Refine}
\begin{enumerate}
\item
Let $\gamma$ and $\delta$ be colorings of $H_m$. We say that $\delta$ \emph{refines} $\gamma$ and write $\gamma \leq \delta$ if whenever $f_0, f_1\in H_m$ and $\delta(f_0) = \delta(f_1)$, then $\gamma(f_0) = \gamma(f_1)$. Refinement is a pre-order. We say $\gamma$ and $\delta$ are \emph{equivalent} and write $\gamma \sim \delta$ if $\gamma \leq \delta$ and $\delta \leq \gamma$. Equivalence is an equivalence relation.

\item
Fix $m\leq n < \omega$. Let $\gamma$ be a coloring of $H_m$, and let $\delta$ be a coloring of $H_n$. We say that $\delta$ \emph{strongly refines} $\gamma$ and write $\gamma \ll \delta$ if for every $f\in H_m^n$, we have that $\gamma\circ \hatf \leq \delta$.

\item
Let $\gamma$ and $\delta$ be colorings of $H_m$. A \emph{product coloring} of $\gamma$ and $\delta$ is any coloring $\gamma * \delta$ so that for any $f_0, f_1\in H_m$, we have $(\gamma * \delta)(f_0) = (\gamma * \delta)(f_1)$ iff $\gamma(f_0) = \gamma(f_1)$ and $\delta(f_0) = \delta(f_1)$. It is unique up to equivalence, so we usually call $\gamma * \delta$ \emph{the} product coloring.
\end{enumerate}
\end{defin}
\vspace{3 mm}

\begin{lemma}
\label{StayUnavoidable}
Let $\gamma$ be an unavoidable $k$-coloring of $H_m$. Then for every $\eta\in \widehat{G}$, $\gamma\cdot \eta$ is also an unavoidable $k$-coloring.
\end{lemma}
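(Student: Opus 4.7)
The plan is to reduce the statement to two basic observations about pre-images of colour classes under the $\widehat{G}$-action, and then to exploit the semigroup structure of $\widehat{G}$.

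First, I would write $\gamma\colon H_m\to r$ and set $S_i = \gamma^{-1}(\{i\})$ for each $i<r$, so that by hypothesis each $S_i$ is either empty or unavoidable, and exactly $k$ of the $S_i$ are non-empty. For $\eta\in\widehat G$, the coloring $\gamma\cdot \eta$ is given by $(\gamma\cdot\eta)(f) = \gamma(\eta\cdot f)$, so its colour classes are exactly $(\gamma\cdot\eta)^{-1}(\{i\}) = \{f\in H_m : \eta\cdot f\in S_i\} = \eta^{-1}(S_i)$. Thus the whole statement reduces to showing that (a) if $S_i$ is empty then $\eta^{-1}(S_i)$ is empty, and (b) if $S_i$ is unavoidable then $\eta^{-1}(S_i)$ is unavoidable.

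Part (a) is immediate. For part (b), I would unpack unavoidability: I need to check that for every $\zeta\in\widehat G$, $\zeta^{-1}(\eta^{-1}(S_i))\neq\emptyset$. Since elements of $\widehat G = \emb{\mathbf{K}}$ act on $H_m$ by left composition, $\zeta\cdot f = \zeta\circ f$ and $\eta\cdot(\zeta\cdot f) = (\eta\circ\zeta)\cdot f$, so
\[
\zeta^{-1}(\eta^{-1}(S_i)) \;=\; \{f\in H_m : (\eta\circ\zeta)\cdot f\in S_i\} \;=\; (\eta\circ\zeta)^{-1}(S_i).
\]
The key point is that $\eta\circ\zeta$ again lies in $\widehat G$ (i.e.\ $\widehat G$ is a semigroup under composition), so the hypothesis that $S_i$ is unavoidable applied to the single element $\eta\circ\zeta\in\widehat G$ yields $(\eta\circ\zeta)^{-1}(S_i)\neq\emptyset$. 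This gives unavoidability of $\eta^{-1}(S_i)$.

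Finally, to verify the image has exactly $k$ elements, note that unavoidable sets are in particular non-empty, so for each of the $k$ indices with $S_i$ unavoidable we get $\eta^{-1}(S_i)\neq\emptyset$, while the remaining indices contribute the empty set, matching the empty $S_i$'s. Hence $|\im{\gamma\cdot\eta}| = k$ and $\gamma\cdot\eta$ is an unavoidable $k$-coloring. There is no real obstacle here; the only thing to be slightly careful with is that the action of $\widehat G$ on $H_m$ composes in the right order, which is exactly what makes $\eta\circ\zeta$ appear naturally in the calculation.
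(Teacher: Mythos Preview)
Your proof is correct and is exactly the unpacking of the definition that the paper has in mind; the paper's own proof is the single line ``Immediate from the definition.'' The only content is the semigroup identity $\zeta^{-1}(\eta^{-1}(S_i)) = (\eta\circ\zeta)^{-1}(S_i)$, which you handle correctly.
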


\begin{proof}
Immediate from the definition.
\end{proof}
\vspace{3 mm}

\begin{lemma}
\label{DynamicRefine}
Let $\gamma$ and $\delta$ be colorings of $H_m$ with $\gamma \leq \delta$. If $\alpha\in S(G)$, then $\gamma\cdot \alpha \leq \delta \cdot \alpha$. If $n\geq m$ and $\phi$ is a coloring of $H_n$ with $\gamma \ll \phi$, then also $\gamma\cdot \alpha \ll \phi\cdot \alpha$.
\end{lemma}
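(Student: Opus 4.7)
The plan is to reduce both statements to computations with $g \in G$ via continuity, using two observations: refinement can be characterized as factoring through a function, and the coloring spaces $k^{H_m}$ carry the product topology where $G$ acts continuously and $S(G)$ acts continuously by its extended action.

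First I would handle the refinement statement. Note that $\gamma \leq \delta$ is equivalent to the existence of a function $h$ with $\gamma = h \circ \delta$ (define $h$ on $\im{\delta}$ by $h(\delta(f)) := \gamma(f)$, which is well-defined by the refinement hypothesis, and extend arbitrarily). For any $g \in G$ and $f \in H_m$, directly from the definition of the action one computes $(\gamma \cdot g)(f) = \gamma(g \cdot f) = h(\delta(g \cdot f)) = h((\delta \cdot g)(f))$, so $\gamma \cdot g = h \circ (\delta \cdot g)$, giving $\gamma \cdot g \leq \delta \cdot g$. Now fix a net $g_i \to \alpha$ in $S(G)$ with $g_i \in G$. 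By continuity of the extended $S(G)$-action on the coloring flows, $\gamma \cdot g_i \to \gamma \cdot \alpha$ in $k^{H_m}$ and $\delta \cdot g_i \to \delta \cdot \alpha$ in $r^{H_m}$ (where $k, r$ are the sizes of the relevant codomains). Post-composition with $h$ is continuous (both spaces are products of finite discrete sets and $h$ acts coordinatewise), so $h \circ (\delta \cdot g_i) \to h \circ (\delta \cdot \alpha)$. Hence $\gamma \cdot \alpha = h \circ (\delta \cdot \alpha)$, proving $\gamma \cdot \alpha \leq \delta \cdot \alpha$.

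For the strong refinement statement, the key compatibility is that for every $f \in H_m^n$ and $\eta \in \widehat{G}$,
\begin{align*}
(\gamma \circ \hatf) \cdot \eta \;=\; (\gamma \cdot \eta) \circ \hatf,
\end{align*}
which one checks directly on $s \in H_n$: both sides evaluate to $\gamma(\eta \circ s \circ f)$. I would then extend this identity to arbitrary $\alpha \in S(G)$ by the same net argument, using the fact that the map $\psi \mapsto \psi \circ \hatf$ from $k^{H_m}$ to $k^{H_n}$ is continuous (it is the restriction-type map induced by $\hatf: H_n \to H_m$ on product spaces). Thus $(\gamma \circ \hatf) \cdot \alpha = (\gamma \cdot \alpha) \circ \hatf$ as colorings of $H_n$.

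Finally, assuming $\gamma \ll \phi$, fix $f \in H_m^n$; then $\gamma \circ \hatf \leq \phi$ as colorings of $H_n$. Applying the first part of the lemma (now at level $n$) yields $(\gamma \circ \hatf) \cdot \alpha \leq \phi \cdot \alpha$, and substituting the identity above gives $(\gamma \cdot \alpha) \circ \hatf \leq \phi \cdot \alpha$. Since $f \in H_m^n$ was arbitrary, $\gamma \cdot \alpha \ll \phi \cdot \alpha$. The only subtlety I anticipate is bookkeeping with the net/continuity argument to pass from $G$ to $S(G)$; everything else is an unwinding of definitions.
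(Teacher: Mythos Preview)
Your proof is correct and follows essentially the same route as the paper: pass from $G$ to $S(G)$ by a net/continuity argument, using that the refinement relation is preserved under the $G$-action. The paper checks refinement pointwise on pairs $f_0,f_1\in H_m$ rather than via your factoring function $h$, and it dismisses the second statement with ``it suffices to prove the first part,'' leaving implicit the commutation identity $(\gamma\circ\hatf)\cdot\alpha = (\gamma\cdot\alpha)\circ\hatf$ that you spell out; your version is more explicit but not genuinely different.
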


\begin{proof}
It suffices to prove the first part of the lemma. Fix $f_0, f_1\in H_m$. Let $g_i\in G$ be a net with $g_i\rightarrow \alpha$. If $\delta\cdot \alpha(f_0) = \delta\cdot \alpha(f_1)$, then eventually $\delta\cdot g_i(f_0) = \delta\cdot g_i(f_1)$. So eventually $\gamma\cdot g_i(f_0) = \gamma\cdot g_i(f_1)$, and $\gamma\cdot \alpha(f_0) = \gamma\cdot \alpha(f_1)$.
\end{proof}
\vspace{3 mm}

\begin{lemma}
\label{MakeUnavoidable}
Let $\gamma: H_m\rightarrow r$ be a coloring. Then there is $\eta\in \widehat{G}$ with $\gamma\cdot \eta$ unavoidable.
\end{lemma}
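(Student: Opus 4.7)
The plan is to prove this by a minimization argument. Consider the quantity $N(\eta) := |\mathrm{Im}(\gamma\cdot \eta)|$ as $\eta$ ranges over $\widehat{G}$. This is a set of positive integers bounded above by $r$, so it attains a minimum value $k$. Fix some $\eta_0\in \widehat{G}$ with $N(\eta_0) = k$, and set $\gamma_0 := \gamma\cdot \eta_0$. I claim that $\gamma_0$ is an unavoidable $k$-coloring; since $\gamma_0 = \gamma\cdot \eta_0$, this proves the lemma.

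To show the claim, write $T_i := \gamma_0^{-1}(\{i\})$ for $i < r$, and suppose for contradiction that some $T_{i_0}$ is nonempty but not unavoidable. By Definition \ref{ComboDef}(2), this means that $H_m\setminus T_{i_0}$ is large, so there exists $\theta\in \widehat{G}$ with $\theta^{-1}(H_m\setminus T_{i_0}) = H_m$, equivalently $\theta^{-1}(T_{i_0}) = \emptyset$. A direct computation (using that for any $\sigma, \tau \in \widehat{G}$ and $f\in H_m$ one has $\gamma\cdot(\sigma\circ \tau)(f) = \gamma(\sigma\circ \tau \circ f) = (\gamma\cdot \sigma)\cdot \tau(f)$) gives $\gamma_0\cdot \theta = \gamma\cdot (\eta_0\circ \theta)$. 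Since $\eta_0\circ \theta\in \widehat{G}$, this is a valid competitor in the minimization.

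The main point is now that $\gamma_0\cdot \theta$ uses strictly fewer colors than $\gamma_0$: for each $i < r$, color $i$ lies in the image of $\gamma_0\cdot \theta$ iff $\theta^{-1}(T_i)\neq \emptyset$, so color $i_0$ is dropped while no new color is introduced. Thus $N(\eta_0\circ \theta) < k$, contradicting the minimality of $k$. Hence every nonempty color class of $\gamma_0$ must be unavoidable, and exactly $k$ colors are used, so $\gamma_0$ is an unavoidable $k$-coloring as desired.

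The only subtlety I anticipate is being careful that the $\widehat{G}$-action $\gamma\mapsto \gamma\cdot \eta$ composes correctly (so that $\gamma_0\cdot \theta$ can be rewritten as $\gamma$ acted on by a single element of $\widehat{G}$), and that ``unavoidable'' means precisely that the complement of each nonempty color class fails to be large — exactly the hypothesis one gets from negating unavoidability. Both of these are immediate from the definitions given, so no serious obstacle remains.
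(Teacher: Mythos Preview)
Your proof is correct and uses essentially the same idea as the paper: if some nonempty color class fails to be unavoidable, compose with an element of $\widehat{G}$ that kills it. The paper organizes this as an explicit iteration through the colors $0,\dots,r-1$ (composing at each step to empty out any color that is currently neither empty nor unavoidable), whereas you package the same observation as a minimality argument on $|\mathrm{Im}(\gamma\cdot\eta)|$; the underlying mechanism is identical.
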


\begin{proof}
For each $i \leq r$, we define $\eta_i\in \widehat{G}$ inductively as follows. Set $\eta_0 = 1_G$. If $i < r$ and $\eta_i$ has been determined, consider the coloring $\gamma\cdot \eta_i$. If $(\gamma\cdot \eta_i)^{-1}(\{i\})$ is unavoidable, set $\eta_{i+1} = \eta_i$. Otherwise, find $\zeta\in \widehat{G}$ so that $(\gamma\cdot \eta_i\cdot \zeta)^{-1}(\{i\}) = \emptyset$, and set $\eta_{i+1} = \eta_i\cdot \zeta$. Then $\gamma\cdot \eta_r$ is unavoidable.
\end{proof}
\vspace{3 mm}

When objects in the \fr class $\mathcal{K}$ have finite big Ramsey degrees, unavoidable colorings gain quite a bit of structure. In particular, if $\mathbf{A}_m$ has big Ramsey degree $R_m< \omega$, this means that $R_m$ is the largest number for which there is an unavoidable $R_m$-coloring of $H_m$.
\vspace{3 mm}

\begin{prop}
\label{ColorRefine}
Suppose that $\mathbf{A}_m$ has finite big Ramsey degree $R_m < \omega$. Let $\gamma$ be any coloring of $H_m$, and let $\delta$ be an unavoidable $R_m$-coloring of $H_m$. Then there is $\eta\in \widehat{G}$ so that $\gamma\cdot \eta \leq \delta\cdot \eta$.
\end{prop}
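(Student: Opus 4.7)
The plan is to form the product coloring $\gamma * \delta$ and use Lemma \ref{MakeUnavoidable} to make it unavoidable, then exploit the finiteness of the big Ramsey degree to show that this unavoidable product coloring must in fact be equivalent to $\delta$ on the same copy.

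More concretely, let $\phi := \gamma * \delta$ be a product coloring, so by definition $\gamma \leq \phi$ and $\delta \leq \phi$. By Lemma \ref{MakeUnavoidable}, there is $\eta \in \widehat{G}$ so that $\phi \cdot \eta$ is an unavoidable coloring, say with $k$ colors. By Lemma \ref{DynamicRefine}, we have $\gamma \cdot \eta \leq \phi \cdot \eta$ and $\delta \cdot \eta \leq \phi \cdot \eta$. The core observation is now that $k = R_m$. On one hand, since $\phi \cdot \eta$ refines $\delta \cdot \eta$, and $\delta \cdot \eta$ uses exactly $R_m$ colors (it is an unavoidable $R_m$-coloring by Lemma \ref{StayUnavoidable}), we have $k \geq R_m$. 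On the other hand, the definition of big Ramsey degree says that any coloring of $H_m$ in more than $R_m$ colors can be reduced by some $\zeta \in \widehat{G}$ to use at most $R_m$ colors; but $\phi \cdot \eta$ is unavoidable, so $\phi \cdot \eta \cdot \zeta$ is also unavoidable with the same $k$ colors (Lemma \ref{StayUnavoidable}), forcing $k \leq R_m$.

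With $k = R_m$ established, $\phi \cdot \eta$ and $\delta \cdot \eta$ use the same finite number of colors and one refines the other, so each color class on the finer side must equal a color class on the coarser side; hence $\phi \cdot \eta \sim \delta \cdot \eta$. Combining with $\gamma \cdot \eta \leq \phi \cdot \eta$ yields $\gamma \cdot \eta \leq \delta \cdot \eta$, as required.

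The only conceptual step that requires thought is the bound $k \leq R_m$, that is, the (easy but unstated) fact that the big Ramsey degree of $\mathbf{A}_m$ also bounds the size of any unavoidable coloring. Everything else is a direct chaining of the refinement machinery from Definition \ref{Refine} together with Lemmas \ref{StayUnavoidable}, \ref{DynamicRefine}, and \ref{MakeUnavoidable}; no combinatorial structure theorem beyond the definition of big Ramsey degree is needed.
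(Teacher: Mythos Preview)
Your proof is correct and follows essentially the same approach as the paper: form the product coloring $\gamma * \delta$, apply Lemma~\ref{MakeUnavoidable} to make it unavoidable, then use that $R_m$ is the big Ramsey degree together with Lemmas~\ref{StayUnavoidable} and~\ref{DynamicRefine} to conclude $(\gamma * \delta)\cdot\eta \sim \delta\cdot\eta$. The paper's version is terser, simply asserting the equivalence ``since $R_m$ is the big Ramsey degree,'' whereas you spell out the two-sided count $k = R_m$ explicitly.
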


\begin{proof}
Form the product coloring $\gamma * \delta$. Use Lemma \ref{MakeUnavoidable} to find $\eta\in \widehat{G}$ with $(\gamma * \delta)\cdot \eta$ unavoidable. By Lemma \ref{StayUnavoidable}, $\delta\cdot \eta$ is an unavoidable $R_m$-coloring, and by Lemma \ref{DynamicRefine}, \newline $(\gamma * \delta)\cdot \eta \geq \delta\cdot \eta$. Since $R_m$ is the big Ramsey degree, we must have $(\gamma * \delta)\cdot \eta \sim \delta\cdot \eta$. Then $\gamma\cdot \eta\leq \delta \cdot \eta$ as desired.
\end{proof}
\vspace{3 mm}

\begin{cor}
\label{StrongRefine}
Let $m\leq n < \omega$. Suppose that $\mathbf{A}_n$ has finite big Ramsey degree $R_n < \omega$. Let $\gamma$ be any coloring of $H_m$, and let $\delta$ be an unavoidable $R_n$-coloring of $H_n$. Then there is $\eta\in \widehat{G}$ with $\gamma\cdot \eta \ll \delta \cdot \eta$.
\end{cor}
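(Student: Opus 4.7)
The plan is to reduce to Proposition \ref{ColorRefine} applied at level $n$, after bundling all the relevant ``pulled back'' colorings of $H_n$ into a single product coloring.

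First, recall that $H_m^n$ is a \emph{finite} set. For each $f \in H_m^n$, the composition $\gamma \circ \hatf$ is a coloring of $H_n$. Form the product coloring
\begin{equation*}
\Gamma \,:=\, \prod_{f \in H_m^n} (\gamma \circ \hatf),
\end{equation*}
which is a coloring of $H_n$ (well-defined up to equivalence), and which satisfies $\gamma \circ \hatf \leq \Gamma$ for every $f \in H_m^n$.

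Since $\mathbf{A}_n$ has finite big Ramsey degree $R_n$ and $\delta$ is an unavoidable $R_n$-coloring of $H_n$, Proposition \ref{ColorRefine} applied to $\Gamma$ and $\delta$ produces some $\eta \in \widehat{G}$ with $\Gamma \cdot \eta \leq \delta \cdot \eta$. By Lemma \ref{DynamicRefine}, the relation $\gamma \circ \hatf \leq \Gamma$ is preserved under acting by $\eta$, so $(\gamma \circ \hatf) \cdot \eta \leq \Gamma \cdot \eta \leq \delta \cdot \eta$ for each $f \in H_m^n$.

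The remaining step is the identity $(\gamma \circ \hatf) \cdot \eta = (\gamma \cdot \eta) \circ \hatf$, which is a short direct computation: for $s \in H_n$,
\begin{equation*}
(\gamma \circ \hatf) \cdot \eta(s) \,=\, \gamma(\hatf(\eta \cdot s)) \,=\, \gamma(\eta \cdot (s \circ f)) \,=\, (\gamma \cdot \eta)(\hatf(s)).
\end{equation*}
Combining this with the previous inequality gives $(\gamma \cdot \eta) \circ \hatf \leq \delta \cdot \eta$ for every $f \in H_m^n$, which is exactly the statement $\gamma \cdot \eta \ll \delta \cdot \eta$. There is no real obstacle here; the only substantive ingredient is the finiteness of $H_m^n$, which lets us collapse all the necessary pullbacks into one coloring and invoke Proposition \ref{ColorRefine} once.
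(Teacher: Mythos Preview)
Your proof is correct and is exactly the natural argument the paper has in mind: the corollary is stated without proof immediately after Proposition~\ref{ColorRefine}, and the intended route is precisely to bundle the finitely many pullbacks $\gamma\circ\hatf$ for $f\in H_m^n$ into a single product coloring of $H_n$ and apply Proposition~\ref{ColorRefine} once. Your verification of the identity $(\gamma\circ\hatf)\cdot\eta=(\gamma\cdot\eta)\circ\hatf$ and the use of Lemma~\ref{DynamicRefine} to push the refinement through are the right details to check.
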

\vspace{3 mm}

As promised in the introduction, we now discuss why $\mathcal{K}$ having finite big Ramsey degrees doesn't necessarily show that $\mathbf{K}$ admits a big Ramsey ambit. To illustrate this difficulty, let us consider some key differences between the study of small Ramsey degrees and big Ramsey degrees. Much as lower bounds to big Ramsey degrees are witnessed by unavoidable colorings, the lower bounds to small Ramsey degrees are witnessed by \emph{syndetic} colorings. If $\gamma$ is a $k$-coloring of $H_m$, we call $\gamma$ \emph{syndetic} if for some $n\geq m$ and every $s\in H_n$, we have $|\{\gamma(s\circ f): f\in H_m^n\}| = k$. 

In general, we cannot strenthen Lemma \ref{StayUnavoidable} to say that if $\gamma$ is an unavoidable $k$-coloring of $H_m$ and $\alpha\in S(G)$, then $\gamma\cdot \alpha$ is also an unavoidable $k$-coloring. However, this does hold for syndetic colorings; if $\gamma$ is a syndetic $k$-coloring of $H_m$ and $\alpha\in S(G)$, then $\gamma\cdot \alpha$ is also a syndetic $k$-coloring of $H_m$. 

Now suppose each $\mathbf{A}_m$ has finite big Ramsey degree $R_m$. Using Corollary \ref{StrongRefine}, for any $N < \omega$, we can find unavoidable $R_m$-colorings $\gamma_m$ of $H_m$ for each $m< N$ so that $\gamma_m \ll \gamma_n$ whenever $m\leq n < N$. However, for $\mathbf{K}$ to admit a big Ramsey structure, we need to find unavoidable $R_m$ colorings for \emph{every} $m< \omega$ so that $\gamma_m \ll \gamma_n$ whenever $m\leq n$ (see Theorem \ref{LevelsBR}). If each $\mathbf{A}_m$ has small Ramsey degree $r_m$ and we wish to do the same with syndetic $r_m$-colorings of $H_m$, this is an easy compactness argument precicely because the ``strengthened'' version of Lemma \ref{StayUnavoidable} holds for syndetic colorings. 

Similarly, suppose there is some $m< \omega$ so that $\mathbf{A}_m$ does not have finite small Ramsey degree. This means that for every $k< \omega$, we can find a syndetic $k$-coloring of $H_m$. Using compactness, it is easy to cook up an ``infinite'' syndetic coloring, i.e.\ a partition of $H_m$ into countably many disjoint ``syndetic'' sets. Such a coloring can then be used to show the existence of a non-metrizable minimal $G$-flow. On the other hand, if $\mathbf{A}_m$ does not have finite big Ramsey degree, then for every $k < \omega$, there is an unavoidable $k$-coloring. However, it is not clear that there is a partition of $H_m$ into countably many disjoint unavoidable sets, nor is it clear that such a partition yields the existence of a non-metrizable completion flow.

Even though we cannot strengthen Lemma \ref{StayUnavoidable} in general, there is an important case where we can make such a strenthening.
\vspace{3 mm}

\begin{prop}
\label{RefineUnavoidable}
Let $\gamma$ and $\delta$ be unavoidable $k$ and $\ell$ colorings of $H_m$, respectively, with $\gamma \leq \delta$. Let $\alpha\in S(G)$, and assume that $\delta\cdot \alpha$ is an unavoidable $\ell$-coloring. Then $\gamma\cdot \alpha$ is an unavoidable $k$-coloring.
\end{prop}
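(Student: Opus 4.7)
The plan is to reduce everything to a simple observation: because $\delta$ refines $\gamma$, there is a deterministic map on palettes that turns $\delta$ into $\gamma$, and this map is transported along the $S(G)$-action in the only reasonable way.

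First, since $\gamma \leq \delta$, the value $\gamma(f)$ depends only on $\delta(f)$, so there is a function $\pi\colon \mathrm{Im}(\delta) \to \mathrm{Im}(\gamma)$ with $\gamma(f) = \pi(\delta(f))$ for every $f \in H_m$. As $\gamma$ takes exactly $k$ values and $\delta$ takes exactly $\ell$ values, $\pi$ is a surjection from an $\ell$-element set onto a $k$-element set. I would extend $\pi$ arbitrarily to a map $\pi\colon \ell \to k$ so that thinking of $\gamma, \delta$ as maps into $\ell$ and $k$ is unambiguous.

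Next I would verify that the palette reduction commutes with the $S(G)$-action: $\gamma \cdot \alpha = \pi \circ (\delta \cdot \alpha)$. For any $g \in G$ and $f \in H_m$ one has
\begin{align*}
(\gamma \cdot g)(f) = \gamma(g \cdot f) = \pi(\delta(g \cdot f)) = \pi((\delta \cdot g)(f)),
\end{align*}
so $\gamma \cdot g = \pi \circ (\delta \cdot g)$ pointwise. Now pick a net $g_i \to \alpha$ in $G$. For each fixed $f$, the values $(\gamma \cdot g_i)(f)$ and $(\delta \cdot g_i)(f)$ live in finite discrete sets, hence are eventually equal to $(\gamma \cdot \alpha)(f)$ and $(\delta \cdot \alpha)(f)$ respectively; applying $\pi$ in the second equation and passing to the limit gives $(\gamma \cdot \alpha)(f) = \pi((\delta \cdot \alpha)(f))$ for every $f$.

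Finally I would read off the conclusion from this identity. Since $\delta \cdot \alpha$ is, by hypothesis, an unavoidable $\ell$-coloring, its image is the full $\ell$-element palette, so the image of $\gamma \cdot \alpha = \pi \circ (\delta \cdot \alpha)$ is $\pi(\ell) = k$ by surjectivity of $\pi$. Moreover, for each $j$ in the image of $\gamma \cdot \alpha$,
\begin{align*}
(\gamma \cdot \alpha)^{-1}(\{j\}) = \bigcup_{i \in \pi^{-1}(j)} (\delta \cdot \alpha)^{-1}(\{i\}),
\end{align*}
which is a nonempty union of unavoidable sets, hence unavoidable. This establishes that $\gamma \cdot \alpha$ is an unavoidable $k$-coloring. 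No serious obstacle arises; the only subtle point is the interchange of $\pi$ with the limit along the net, and this is harmless because all the colorings take values in a finite discrete set so convergence in $k^{H_m}$ is pointwise and eventually constant at each coordinate.
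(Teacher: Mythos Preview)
Your proof is correct and follows essentially the same approach as the paper: both define the palette map $\pi$ (called $\psi$ in the paper) from $\gamma\leq\delta$, show that $\gamma\cdot\alpha=\pi\circ(\delta\cdot\alpha)$, and then read off surjectivity and unavoidability of the color classes. The only cosmetic difference is that the paper packages your net argument as the statement that post-composition by $\pi$ is a continuous $G$-map $\ell^{H_m}\to k^{H_m}$, hence automatically commutes with the $S(G)$-action.
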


\begin{proof}
We may assume that $\gamma$ and $\delta$ have range $k$ and $\ell$, respectively. Let $\psi: \ell\rightarrow k$ be the map so that for $f\in H_m$ and $i < \ell$, we have $\delta(f) = i$ implies $\gamma(f) = \psi(i)$. The map $\psi$ induces a $G$-map $\tilde\psi: \ell^{H_m}\rightarrow k^{H_m}$, where if $\phi\in \ell^{H_m}$ and $f\in H_m$, we have $\tilde\psi(\phi)(f) = \psi(\phi(f))$. Note that $\tilde\psi(\delta) = \gamma$, so since $\tilde\psi$ is a $G$-map, we have $\tilde\psi(\delta\cdot \alpha) = \gamma\cdot \alpha$. Since $\delta\cdot \alpha$ is unavoidable and $\delta\cdot \alpha \geq \gamma\cdot \alpha$, we have that $\gamma\cdot \alpha$ is unavoidable. And since $\psi$ is surjective, we have that $\gamma\cdot \alpha$ is an unavoidable $k$-coloring.
\end{proof}
\vspace{3 mm}

\section{Proof of Theorem \ref{BRUCA}}
\label{ProofSection}

This section culminates with the proof of Theorem \ref{BRUCA}. Along the way, we will need a deeper understanding of the ``level-by-level'' dynamics of spaces of structures. Throughout this section, fix a \fr $L$-structure $\mathbf{K} = \flim{\mathcal{K}} = \bigcup_n \mathbf{A}_n$ which admits a big Ramsey structure $\mathbf{K}'$ in a language $L'\supseteq L$. We let $X_{\mathbf{K}'} = \overline{\mathbf{K}'\cdot G}$ be the orbit closure of $\mathbf{K}'$ in the space of $L'$-structures on $K$. For each $m< \omega$, we let $R_m <\omega$ denote the big Ramsey degree of $\mathbf{A}_m$.

We will often refer to ``colorings'' of $H_m$ whose ranges are finite, but not necessarily contained in $\omega$; all the definitions and theorems from the previous section transfer to this more general notion of coloring in the obvious way. 

The first easy proposition shows that big Ramsey flows are completion flows.

\begin{prop}
\label{BRCompletion}
$\mathbf{K}'\in X_{\mathbf{K}'}$ is a completion point.
\end{prop}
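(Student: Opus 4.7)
My plan is to unpack what ``completion point'' means in this concrete setting and then invoke the unavoidability clause in the definition of big Ramsey structure. To show $\mathbf{K}'$ is a completion point, I must verify that for every $\eta\in \widehat{G}$, the orbit $\mathbf{K}'\cdot\eta\cdot G$ is dense in $X_{\mathbf{K}'}$. In the logic topology, basic open neighborhoods of a point $y\in X_{\mathbf{K}'}$ are generated by sets of the form $\{x: x\cdot i_m = y\cdot i_m\}$ as $m$ ranges over $\omega$ (any finite subset of $K$ sits in some $\mathbf{A}_m$). So it suffices to show: given $\eta\in \widehat{G}$, $y\in X_{\mathbf{K}'}$, and $m<\omega$, there is $g\in G$ with $(\mathbf{K}'\cdot\eta\cdot g)\cdot i_m = y\cdot i_m$.

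The first observation is that $y\cdot i_m\in \mathbf{K}'(\mathbf{A}_m)$. Indeed, $y = \lim_i \mathbf{K}'\cdot g_i$ for some net $g_i\in G$, and since the map $x\mapsto x\cdot i_m$ is continuous into the discrete finite set of $L'$-structures on $A_m$, the values $\mathbf{K}'\cdot(g_i\circ i_m)\in \mathbf{K}'(\mathbf{A}_m)$ must eventually equal $y\cdot i_m$. Now consider the canonical coloring $\gamma\colon H_m\to \mathbf{K}'(\mathbf{A}_m)$ given by $\gamma(h)=\mathbf{K}'\cdot h$. Clause (4) of Definition \ref{BRStructure} asserts that $\gamma$ witnesses the lower bound $|\mathbf{K}'(\mathbf{A}_m)|$ on the big Ramsey degree of $\mathbf{A}_m$; combined with clauses (2) and (3), this is exactly the statement that $\gamma$ is an unavoidable $|\mathbf{K}'(\mathbf{A}_m)|$-coloring in the sense of Definition \ref{ComboDef}. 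In particular, for the given $\eta$, every color in $\mathbf{K}'(\mathbf{A}_m)$ is realized by $\gamma(\eta\circ h)$ for some $h\in H_m$.

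Applied to the color $y\cdot i_m$, this produces $h\in H_m$ with $\mathbf{K}'\cdot(\eta\circ h) = y\cdot i_m$. By ultrahomogeneity of $\mathbf{K}$ (equivalently, transitivity of the $G$-action on $H_m$, as recorded in Proposition \ref{Amalgamation}), pick $g\in G$ with $g\circ i_m = h$. Then
\[
(\mathbf{K}'\cdot\eta\cdot g)\cdot i_m \;=\; \mathbf{K}'\cdot(\eta\circ g\circ i_m) \;=\; \mathbf{K}'\cdot(\eta\circ h) \;=\; y\cdot i_m,
\]
as required. There is no real obstacle in the argument: once one identifies clause (4) of the definition of a big Ramsey structure with the assertion that $\gamma(h)=\mathbf{K}'\cdot h$ is an unavoidable coloring, completeness of $\mathbf{K}'$ as a point of $X_{\mathbf{K}'}$ is an immediate unpacking of the logic topology plus Fra\"iss\'e homogeneity.
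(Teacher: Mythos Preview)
Your proof is correct and takes essentially the same approach as the paper: both arguments use clause~(4) of Definition~\ref{BRStructure} to see that the coloring $h\mapsto \mathbf{K}'\cdot h$ is unavoidable, so that after any $\eta\in\widehat{G}$ one can still realize any prescribed expansion on $\mathbf{A}_m$ via some $g\in G$. The only cosmetic difference is that the paper targets just the basic neighborhoods of $\mathbf{K}'$ itself (your argument with $y=\mathbf{K}'$), obtains $g_n\in G$ with $(\mathbf{K}'\cdot\eta\cdot g_n)\cdot i_n=\mathbf{K}'\cdot i_n$ for each $n$, and passes to a cluster point $p\in S(G)$ to conclude $\mathbf{K}'\cdot\eta\cdot p=\mathbf{K}'$; since $\mathbf{K}'$ already has dense orbit this suffices.
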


\begin{proof}
Fix $\eta\in \widehat{G}$. We need to find $p\in S(G)$ with $\mathbf{K}'\cdot \eta \cdot p = \mathbf{K}'$. Let $n < \omega$. Since $\mathbf{K}'$ is a big Ramsey structure, we can find $g_n\in G$ with $\mathbf{K}'\cdot \eta \cdot g_n|_n = \mathbf{K}'\cdot i_n$. Let $p\in S(G)$ be a cluster point of the $g_n$. Then $\mathbf{K}'\cdot \eta\cdot p = \mathbf{K}'$ as desired.
\end{proof}
\vspace{3 mm}

Recall that $\pi_m: S(G)\rightarrow \beta H_m$ is the projection onto level $m$. If $Y\subseteq S(G)$ is closed, we write $Y_m := \pi_m''(Y)$. Then we have $Y = \varprojlim Y_m$.
\vspace{3 mm}

\begin{lemma}
\label{CompletionLift}
Let $(X, x_0)$ be a completion ambit, and let $Y\subseteq S(G)$ be a lift of $(X, x_0)$. Then $Y$ is an completion flow, and for every $m< \omega$, we have $|Y_m|\leq R_m$.
\end{lemma}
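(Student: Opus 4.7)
The first assertion is immediate. By Lemma \ref{LiftLemmas}(1) the restriction $\phi_{x_0}|_Y \colon Y \to X$ is a strong $G$-map, and since $X$ is a completion flow, Proposition \ref{PreimageStrongCompletion} yields that $Y$ is also a completion flow.

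For the level-wise bound, I would first pick an idempotent $u \in Y \cap S_{x_0}$. This exists because, by Lemma \ref{LiftLemmas}(2), $Y \cap S_{x_0}$ is a minimal right ideal of the compact left-topological semigroup $S_{x_0}$, so Fact \ref{SemiFacts} applies. As usual this gives $Y = uS(G)$, so $Y_m = \lambda_u^m(\beta H_m) = \overline{\lambda_u^m(H_m)}$. Combining strongness of $\phi_{x_0}|_Y$ with $x_0$ being a completion point of $X$ shows that $u$ is itself a completion point of $Y$, i.e.\ $\overline{u\eta G} = Y$ for every $\eta \in \widehat{G}$. Projecting through the continuous, compactness-preserving map $\pi_m$ yields the key density statement: for every $\eta \in \widehat{G}$, the image $\lambda_{u\eta}^m(H_m)$ is dense in $Y_m$.

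Now suppose for contradiction that $|Y_m| > R_m$. Since $Y_m$ is a compact subspace of the zero-dimensional space $\beta H_m$, partition $Y_m$ into $k > R_m$ non-empty pieces $U_1, \dots, U_k$, each clopen in $Y_m$, and define $V_i := \{f \in H_m : u \cdot f \in U_i\}$. These partition $H_m$ into $k$ non-empty pieces (non-emptiness follows from density of $\lambda_u^m(H_m)$ in $Y_m$), and the crucial step is that each $V_i$ is unavoidable: for any $\eta \in \widehat{G}$, one has $\eta^{-1}(V_i) = \{f : (u\eta) \cdot f \in U_i\}$, and the density of $\lambda_{u\eta}^m(H_m)$ in $Y_m$ forces this to meet the non-empty open set $U_i$, hence to be non-empty. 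Thus $\{V_i\}$ witnesses an unavoidable $k$-coloring of $H_m$ with $k > R_m$, contradicting the definition of $R_m$ as the big Ramsey degree of $\mathbf{A}_m$.

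The main conceptual step is the translation between \emph{every} $u\eta$ being a transitive point of $Y$ (which is what ``completion point'' delivers) and the simultaneous density of all $\lambda_{u\eta}^m(H_m)$ in $Y_m$; once this is in hand, the pullback-partition argument converts any excess in $|Y_m|$ over $R_m$ into an unavoidable coloring of the forbidden size, and the rest is routine.
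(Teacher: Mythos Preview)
Your proof is correct and follows essentially the same approach as the paper's. The only cosmetic differences are that the paper picks an arbitrary $y \in Y \cap S_{x_0}$ rather than an idempotent (idempotency is not needed here), and phrases the partition step as ``any continuous surjection $\delta\colon Y_m \to r$ yields an unavoidable $r$-coloring $\delta\circ\gamma$'' instead of explicitly pulling back a clopen partition; both formulations amount to the same density-plus-zero-dimensionality argument.
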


\begin{proof}
Write $M = Y\cap S_{x_0}$, and fix $y\in M$. Notice that $y$ is a completion point of $Y$ by Proposition \ref{PreimageStrongCompletion} and by item (1) of Lemma \ref{LiftLemmas}. Let $\gamma: H_m\rightarrow Y_m$ be given by $\gamma(f) = y\cdot f$. Suppose $r < \omega$ and that $\delta: Y_m\rightarrow r$ is a continuous surjection. Then since $y$ is a comletion point, $\delta\circ \gamma$ is an unavoidable $r$-coloring, so $r\leq R_m$. In particular, since $Y_m$ is zero-dimensional, we must have $|Y_m|\leq R_m$. 
\end{proof}
\vspace{3 mm}

\begin{rem}
We didn't need the big Ramsey structure to prove Lemma \ref{CompletionLift}. We only needed the big Ramsey degrees to be finite. So whenever $\mathbf{K} = \flim{\mathcal{K}}$ is a \fr structure so that every $\mathbf{A}\in \mathcal{K}$ has finite big Ramsey degree, then every completion flow of $G = \aut{\mathbf{K}}$ is metrizable.
\end{rem}
\vspace{3 mm}

Recall that if $A$ is a set, $\mathbf{B}$ is an $L'$-structure and $f: A\rightarrow B$ is injective, then $\mathbf{B}\cdot f$ is the unique $L'$-structure with underlying set $A$ so that $f: \mathbf{B}\cdot f\rightarrow \mathbf{B}$ is an embedding. Also recall that we sometimes break our convention of using boldface for structures when considering points in spaces of structures.
\vspace{3 mm}

\begin{lemma}
\label{LogicAction}
Let $p\in S(G)$, and fix $m< \omega$. Let $\mathbf{A}_m'\in \mathbf{K}'(\mathbf{A}_m)$, and let $x\in X_{\mathbf{K}'}$. Then  $xp\cdot i_m = \mathbf{A}_m'$ iff $\{f\in H_m: x\cdot f = \mathbf{A}_m'\}\in p(m)$.
\end{lemma}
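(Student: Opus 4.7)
The plan is to pull back via a convergent net from $G$ and translate everything through the discreteness of the finite set of possible values at level $m$. First I would fix a net $\{g_i\}$ in $G$ with $g_i \to p$ in $S(G)$, which exists because $G$ is dense in $S(G)$. In particular, this gives $g_i|_m \to p(m)$ in $\beta H_m$ by definition of the inverse-limit topology on $S(G) = \varprojlim \beta H_n$. Since $\phi_x : S(G) \to X_{\mathbf{K}'}$ defined by $q \mapsto x\cdot q$ is the unique map of ambits from $(S(G), 1_G)$ to $(\overline{xG}, x)$ and is therefore continuous, $x\cdot g_i \to x\cdot p$ in $X_{\mathbf{K}'}$.

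Next I would exploit the fact that evaluating $y \mapsto y\cdot i_m$ is continuous into the (Hausdorff) space of $L'$-structures on $A_m$, and, crucially, on $X_{\mathbf{K}'}$ takes values in the \emph{finite} set $\mathbf{K}'(\mathbf{A}_m)$ of $L'$-expansions of $\mathbf{A}_m$ appearing in $\age{\mathbf{K}'}$ (Definition \ref{BRStructure}(2)). A straightforward unwinding of the logic-topology action gives the identity $(x\cdot g_i)\cdot i_m = x\cdot g_i|_m$, so $x\cdot g_i|_m \to xp\cdot i_m$ in a finite discrete space. It follows that $x\cdot g_i|_m = xp\cdot i_m$ eventually. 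Therefore, writing $S = \{f\in H_m : x\cdot f = \mathbf{A}_m'\}$, the condition $xp\cdot i_m = \mathbf{A}_m'$ is equivalent to $g_i|_m \in S$ eventually.

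To close the argument I would invoke the standard characterization of convergence in $\beta H_m$: the basic clopen sets are $\overline{S} = \{q \in \beta H_m : S \in q\}$, so a net from $H_m$ converging to $p(m)$ is eventually in $S$ iff $p(m) \in \overline{S}$, i.e.\ iff $S \in p(m)$. Combining this with the previous paragraph yields $xp\cdot i_m = \mathbf{A}_m'$ iff $S \in p(m)$, which is the desired conclusion. I do not anticipate any real obstacle; the proof is simply the transfer between the abstract $S(G)$-action on $X_{\mathbf{K}'}$ and its ultrafilter description at level $m$, and the only substantive input is the finiteness of $\mathbf{K}'(\mathbf{A}_m)$ which turns topological convergence into an eventually-constant statement.
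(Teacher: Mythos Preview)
Your proposal is correct and follows essentially the same approach as the paper's proof: choose a net $g_i\to p$, use continuity of the action together with the finiteness of $\mathbf{K}'(\mathbf{A}_m)$ to get that $x\cdot g_i|_m$ is eventually equal to $xp\cdot i_m$, and then read off the ultrafilter condition at level $m$. The paper's argument is terser---it just observes that $\{f\in H_m: x\cdot f = xp\cdot i_m\}\in p(m)$ and leaves the biconditional implicit via the partition of $H_m$ into fibers---whereas you spell out the clopen-set characterization of convergence in $\beta H_m$ to close both directions, but the substance is identical.
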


\begin{proof}
Let $g_i\in G$ be a net with $g_i\rightarrow p$. Then $xg_i\rightarrow xp$. Since $\mathbf{K}'(\mathbf{A}_m)$ is finite, $xg_i\cdot i_m$ eventually equals $xp\cdot i_m$. But also $xg_i\cdot i_m = x\cdot g_i|_m$. Therefore we must have $\{f\in H_m: x\cdot f = xp\cdot i_m\}\in p(m)$.
\end{proof}
\vspace{3 mm}

\begin{defin}
\label{LevelAction}
By Lemma \ref{LogicAction}, if $p\in S(G)$, $x\in X_{\mathbf{K}'}$, and $m< \omega$, then $xp\cdot i_m$ depends only on $p(m)$. If $q\in \beta H_m$, we write $xq$ for the unique expansion $\mathbf{A}_m'\in \mathbf{K}'(\mathbf{A}_m)$ with $\{f\in H_m: x\cdot f = \mathbf{A}_m'\}\in q$.
\end{defin}
\vspace{3 mm}

\begin{prop}
\label{BRLift}
Let $Y\subseteq S(G)$ be a lift of $(X_{\mathbf{K}'}, \mathbf{K}')$. Then $\phi_{\mathbf{K}'}: Y\rightarrow X_{\mathbf{K}'}$ is an isomorphism.
\end{prop}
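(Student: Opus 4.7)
The plan is to show that the strong map $\phi_{\mathbf{K}'}: Y \to X_{\mathbf{K}'}$ from Lemma \ref{LiftLemmas} is injective by analyzing it level-by-level. Since $\mathbf{K}'$ is a completion point by Proposition \ref{BRCompletion}, Lemma \ref{CompletionLift} applies to give $|Y_m| \leq R_m$ for every $m < \omega$.

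First I would define, for each $m < \omega$, a continuous map $\Phi_m: Y_m \to \mathbf{K}'(\mathbf{A}_m)$ as follows. Given $y \in Y$, we have $\phi_{\mathbf{K}'}(y) = \mathbf{K}' \cdot y$, so $\phi_{\mathbf{K}'}(y) \cdot i_m = \mathbf{K}' \cdot y \cdot i_m$. By Lemma \ref{LogicAction} and Definition \ref{LevelAction}, this value depends only on $y(m) \in Y_m$, so we may set $\Phi_m(y(m)) := \mathbf{K}' \cdot y \cdot i_m \in \mathbf{K}'(\mathbf{A}_m)$. This is well-defined and continuous. Moreover $\Phi_m$ is surjective: since $\phi_{\mathbf{K}'}$ is surjective (as all strong maps are), and since the projection $X_{\mathbf{K}'} \to \mathbf{K}'(\mathbf{A}_m)$ sending $x \mapsto x \cdot i_m$ hits every expansion in $\mathbf{K}'(\mathbf{A}_m)$ (because $\mathbf{K}' \cdot g \cdot i_m = \mathbf{K}' \cdot g|_m$ ranges over $\mathbf{K}'(\mathbf{A}_m)$ as $g$ ranges over $G$), the composite $y \mapsto \phi_{\mathbf{K}'}(y) \cdot i_m$ is surjective onto $\mathbf{K}'(\mathbf{A}_m)$.

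Combining surjectivity with the cardinality bound $|Y_m| \leq R_m = |\mathbf{K}'(\mathbf{A}_m)|$ forces $\Phi_m$ to be a bijection, with $|Y_m| = R_m$. This is the key step: it pins down the $m$-th level of $Y$ as being in bijection with the $m$-th level of $X_{\mathbf{K}'}$ via $\phi_{\mathbf{K}'}$.

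Finally I would use this to conclude injectivity of $\phi_{\mathbf{K}'}$. If $y_1, y_2 \in Y$ satisfy $\phi_{\mathbf{K}'}(y_1) = \phi_{\mathbf{K}'}(y_2)$, then for each $m$ we have $\Phi_m(y_1(m)) = \phi_{\mathbf{K}'}(y_1) \cdot i_m = \phi_{\mathbf{K}'}(y_2) \cdot i_m = \Phi_m(y_2(m))$, so $y_1(m) = y_2(m)$ by injectivity of $\Phi_m$. Since $Y \subseteq S(G) = \varprojlim \beta H_n$, this gives $y_1 = y_2$, so $\phi_{\mathbf{K}'}$ is a continuous bijection between compact Hausdorff spaces and hence an isomorphism of $G$-flows. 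The main obstacle is the surjectivity of $\Phi_m$; everything else is bookkeeping once one notes that the completion-ambit property of $(X_{\mathbf{K}'}, \mathbf{K}')$ is exactly what forces $|Y_m| \leq R_m$ via Lemma \ref{CompletionLift}.
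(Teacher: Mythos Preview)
Your proof is correct and follows essentially the same approach as the paper: define the level-$m$ map $\Phi_m: Y_m \to \mathbf{K}'(\mathbf{A}_m)$ (the paper calls it $\phi_{\mathbf{K}'}^m$), observe it is surjective because $\phi_{\mathbf{K}'}$ is, combine with the bound $|Y_m|\leq R_m$ from Lemma~\ref{CompletionLift} to get bijectivity at each level, and then read off injectivity of $\phi_{\mathbf{K}'}$ from the inverse-limit description of $S(G)$. Your write-up is slightly more explicit about why $\Phi_m$ is surjective, but the argument is the same.
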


\begin{proof}
By Lemma \ref{LogicAction}, the map $\phi_{\mathbf{K}'}^m: Y_m\rightarrow \mathbf{K}'(\mathbf{A}_m)$ sending $p$ to $\mathbf{K'}p$ is well-defined. Furthermore, since $\phi_{\mathbf{K}'}$ is surjective, $\phi_{\mathbf{K}'}^m$ must also be surjective, so $|Y_m|\geq R_m$. By Lemma \ref{CompletionLift} and Proposition \ref{BRCompletion}, we must have $|Y_m| = R_m$, and the map is bijective. 

Towards a contradiction, assume $\phi_{\mathbf{K}'}$ wasn't injective, and find $y_0\neq y_1\in Y$ with $\mathbf{K}'y_0 = \mathbf{K}'y_1$. Find $m <\omega$ with $y_0(m)\neq y_1(m)$. But then $\mathbf{K}'y_0(m)\neq \mathbf{K}'y_1(m)$, a contradiction. 
\end{proof}
\vspace{3 mm}

\begin{proof}[Proof of Theorem \ref{BRUCA}]
Instead of working with $X_{\mathbf{K}'}$, we instead use Proposition \ref{BRLift} to work with an isomorphic lift $Y\subseteq S(G)$, and let let $y_0\in Y$ be the unique point with $\mathbf{K}'y_0 = \mathbf{K}'$. Note that $y_0$ is an idempotent and a completion point. By Proposition \ref{EndoIso}, any surjective $G$-map $\psi: Y\rightarrow Y$ must be an isomorphism. Once we prove that $Y$ is a universal completion flow, it then follows that any two universal completion flows are isomorphic.
\vspace{3 mm}

Now let $W$ be another completion flow, with $w_0\in W$ a completion point. Using Lemma \ref{CompletionLift}, we may assume that $W\subseteq S(G)$ with $W_m$ finite for each $m< \omega$. For each $m< \omega$, let $\gamma_m: H_m\rightarrow Y_m$ be the coloring given by $\gamma_m(f) = y_0\cdot f$, and let $\delta_m: H_m\rightarrow W_m$ be the coloring given by $\delta_m(f) = w_0\cdot f$. For each $n < \omega$, use Proposition \ref{ColorRefine} to find $\eta_n\in H$ so that for every $m\leq n$, we have that $\gamma_m\cdot \eta_n \geq\delta_m\cdot \eta_n$. 
\vspace{3 mm}

Since $y_0$ is a completion point, we can find $p_n\in S(G)$ with $y_0\cdot \eta_n\cdot p_n = y_0$. Notice that for each $m\leq n$, we then have $\gamma_m\cdot \eta_n\cdot p_n = \gamma_m$. Let $p\in S(G)$ be a cluster point of the sequence $\eta_np_n$. Then for every $m< \omega$, we have $\gamma_m\cdot p = \gamma_m$. By Proposition \ref{RefineUnavoidable}, each $\delta_m\cdot p$ is an unavoidable $|W_m|$-coloring, and $\gamma_m\cdot p \geq \delta_m\cdot p$. Let $c_m: Y_m\rightarrow W_m$ be the surjective map so that $c_m\circ (\gamma_m \cdot p)= \delta_m\cdot p$.
\vspace{3 mm}

We now show that $w_0\cdot py_0 = w_0\cdot p$. Since $y_0$ is an idempotent and by definition of $\gamma_m$, we have $\gamma_m\cdot y_0 = \gamma_m$. Then $w_0\cdot p \cdot i_m = c_m\circ \gamma_m(i_m) = c_m\circ \gamma_m\cdot y_0(i_m) = w_0\cdot p\cdot y_0\cdot i_m$. Since this holds for each $m< \omega$, we have $w_0\cdot py_0 = w_0\cdot p$ as desired.
\vspace{3 mm}

Consider the $G$-map $\lambda_{w_0p}: Y\rightarrow W$. To check that $\lambda_{w_0p}$ is surjective, it suffices to check for each $m< \omega$ that $\lambda_{w_0p}^m: Y_m\rightarrow W_m$ is surjective. But notice that $w_0p(y_0\cdot f) = w_0p\cdot f$, and we have seen that the map $f\rightarrow w_0\cdot p\cdot f$, i.e.\ the coloring $\delta_m\cdot p$, is surjective.
\end{proof}
\vspace{3 mm}

\section{Examples of universal completion flows}
\label{ExampleSection}
\vspace{3 mm}

This section brings together some examples of automorphism groups with universal completion flows which can be computed using Theorem \ref{BRUCA}. Unlike the case with small Ramsey degrees, few examples of classes whose big Ramsey behavior has been explicitly described are known, leading to this section being relatively short. 

The simplest example of an automorphism group with a metrizable universal completion flow is the group $G = S_\infty = \aut{\mathbf{K}}$, where $\mathbf{K}$ is just a countable set with no additional structure. If we let $\mathbf{A}_n\subseteq \mathbf{K}$ be a subset of size $n$, we see by Ramsey's theorem that the small Ramsey degree and the big Ramsey degree of $\mathbf{A}_n$ are both $n!$ (recall that we are considering embedding versions of Ramsey degree, so the $n!$ comes from the automorphisms of $\mathbf{A}_n$). It follows that the universal minimal flow $M(S_\infty)$ is the universal completion flow of $S_\infty$. This is just the space of linear orders on a countable set. More generally, whenever $\mathbf{K} = \flim{\mathcal{K}}$ and $\mathcal{K}$ is a \fr class where the big and small Ramsey degrees are finite and equal, then $M(\aut{\mathbf{K}})$ is the universal completion flow of $\aut{\mathbf{K}}$. It would be interesting to find other examples of \fr classes where the big and small Ramsey degrees coincide.

\subsection{Finite distance ultrametric spaces}

Another family of examples are the classes of finite distance ultrametric spaces. Fix $S\subseteq (0, \infty)$ with $|S| = r < \omega$, and let $\mathcal{K}_0$ be the class of finite ultrametric spaces with distances from $S$. The big Ramsey behavior of these classes was described by Nguyen Van Th\'e in \cite{LionelRamsey}. To describe the big Ramsey structure, it is useful to instead work with the class $\mathcal{K}$ of rooted finite trees of height at most $r$. Structures in $\mathcal{K}$ are of the form $\langle T, \preceq, L_0,...,L_r\rangle$, where $\preceq$ is the partial order and $L_i$ is a unary predicate saying that a node is on level $i$ of the tree (it should be remarked that this class is not hereditary, but we will discuss \fr classes without HP in the next section). Then $\mathbf{K} = \flim{\mathcal{K}}$ is the rooted, countably-branching tree of height $r$. If $\mathbf{K}_0 = \flim{\mathcal{K}_0}$, then $\mathbf{K}_0$ can be identified with the set of leaves of $\mathbf{K}$, and $\aut{\mathbf{K}}\cong \aut{\mathbf{K}_0}$. Then we have the following.
\vspace{3 mm}

\begin{prop}
Let $\mathbf{K}' = \langle \mathbf{K}, \leq\rangle$, where $\leq$ is a linear order in order type $\omega$ which extends the tree order. Then $\mathbf{K}'$ is a big Ramsey structure for $\mathbf{K}$.
\end{prop}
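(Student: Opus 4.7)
The plan is to verify the four conditions of Definition \ref{BRStructure}. Conditions (1) and (2) are routine: (1) is immediate from the definition of $\mathbf{K}'$, and (2) follows because the expansions in $\mathbf{K}'(\mathbf{A})$ correspond to linear orders on $A$ extending the tree order of $\mathbf{A}$, of which there are only finitely many since $\mathbf{A}$ is finite.

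For the lower bound (condition (4)), fix $\mathbf{A}\in\mathcal{K}$, an expansion $\mathbf{A}'\in\mathbf{K}'(\mathbf{A})$ with associated linear order $\leq_A$, and $\eta\in\emb{\mathbf{K}}$; I would construct $f\in\emb{\mathbf{A},\mathbf{K}}$ with $\mathbf{K}'\cdot(\eta\circ f)=\mathbf{A}'$. The key observation is that $\eta(\mathbf{K})$ is itself a countably branching rooted subtree of $\mathbf{K}$ of height $r$, since $\eta$ preserves both the tree order and the level predicates, and the restriction of $\leq$ to $\eta(\mathbf{K})$ is a linear order (of order type $\omega$, as a countable subset of an $\omega$-order) extending the tree order on $\eta(\mathbf{K})$. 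Enumerating $A$ in $\leq_A$-order as $a_1<_A\cdots<_A a_n$, I would place the $f(a_i)$ inductively: because $\leq_A$ extends the tree order on $A$, the previously placed nodes are either ancestors of where $a_i$ must go (forming a chain in $\eta(\mathbf{K})$) or lie in branches of $\eta(\mathbf{K})$ disjoint from the target location. One then picks a descendant of the lowest such ancestor at the prescribed level of $\eta(\mathbf{K})$, in a fresh branch avoiding the finitely many ``used'' branches, and $\leq$-above all previously placed nodes. The countable branching of $\eta(\mathbf{K})$ and the unboundedness of $\leq|_{\eta(\mathbf{K})}$ guarantee that such a choice always exists.

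For the upper bound (condition (3)), I would appeal to the computation of big Ramsey degrees for these classes due to Nguyen Van Th\'e in \cite{LionelRamsey}, which rests on Milliken's theorem for strong subtrees. Milliken's theorem provides, for every finite coloring $c\colon\emb{\mathbf{A},\mathbf{K}}\to r$, an embedding $\eta\in\emb{\mathbf{K}}$ so that $c(\eta\circ f)$ depends only on the ``tree-type'' of $f$; the content of \cite{LionelRamsey} is that for these level-rooted trees this type is captured precisely by the linear extension of the tree order on $A$ induced by any $\omega$-enumeration of $\mathbf{K}$ extending the tree order --- that is, by the expansion $\mathbf{K}'\cdot f\in\mathbf{K}'(\mathbf{A})$. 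Hence after restriction to $\eta(\mathbf{K})$, the coloring $c$ takes at most $|\mathbf{K}'(\mathbf{A})|$ values.

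The main obstacle is the identification between the Milliken tree-type of an embedding and the expansion $\mathbf{K}'\cdot f$; this identification is essentially present in \cite{LionelRamsey}, amounting to the fact that an $\omega$-enumeration of $\mathbf{K}$ respecting the tree order records exactly the passing-number-style data that distinguishes Milliken types. The new content of the proposition is then the packaging observation: a single linear order on $\mathbf{K}$ in order type $\omega$ extending the tree order simultaneously encodes the big Ramsey degree data for every finite $\mathbf{A}\in\mathcal{K}$, so that one structure $\mathbf{K}'$ serves as a big Ramsey structure for $\mathbf{K}$.
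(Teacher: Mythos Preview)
The paper does not give a proof of this proposition; it is stated without argument, the big Ramsey degree computation being attributed to Nguyen Van Th\'e \cite{LionelRamsey}. Your proposal therefore supplies what the paper leaves implicit, and it does so correctly: conditions (1) and (2) are routine, the upper bound (3) is exactly the content of \cite{LionelRamsey} via Milliken's theorem, and your inductive placement argument for the lower bound (4) is sound. One terminological wrinkle: by ``the lowest such ancestor'' you must mean the ancestor at the \emph{greatest} tree level (the one nearest $a_i$), since that is the node whose subtree must contain $f(a_i)$. The key point making the construction go through---that any infinite subset of an $\omega$-ordered set is cofinal, so that infinitely many candidates at the prescribed level lie $\leq$-above the previously placed nodes---is exactly right.
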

\vspace{3 mm}

It follows that $X_{\mathbf{K}'}$, the space of linear orderings on $\mathbf{K}$ which extend the tree order, is the universal completion flow for $G = \aut{\mathbf{K}}$. It should be noted that this is not the same space as $M(G)$. Nguyen Van Th\'e describes $M(G)$ in \cite{LionelSmall}; this is the space of all \emph{convex} linear orderings on the leaves of $\mathbf{K}$. Here, a linear order of the leaves is convex if whenever $s, t, u\in \mathbf{K}$ are leaves with $s \leq t \leq u$, then the meet of $s$ and $u$ is an initial segment of $t$.

\subsection{The rational linear order}

We next consider the example from the introduction, the rational linear order $\langle \mathbb{Q}, \leq\rangle$. The group $G = \aut{\mathbb{Q}}$ is extremely amenable, but it is not hard to see that $G$ admits a non-trivial completion flow; the space of linear orders on $\mathbb{Q}$ is a good example, as for instance any linear order of order type $\omega$ is a completion point. As was mentioned in the introduction, this is not the universal completion flow. A good account of the big Ramsey behavior of $\mathbb{Q}$ can be found in Todor\v{c}evi\'c's book \cite{Tod}. 

To construct the universal completion flow, first consider the binary tree $2^{< \omega}$. If $x, y\in 2^{< \omega}$, we set $x\wedge y$ to be the longest common initial segment of both $x$ and $y$. We set $|x|$ to be the unique $n< \omega$ so that $x\in 2^n$. If $x\in 2^n$ and $m< n$, write $x|_m$ for the restriction of $x$ to domain $m$. We say that $x$ and $y$ are \emph{comparable} if either $x\wedge y = x$ or $x\wedge y = y$; otherwise we say $x$ and $y$ are \emph{incomparable}. Define $x\prec y$ if $x$ and $y$ are incomparable and $x(|x\wedge y|) < y(|x\wedge y|)$, which in the case of the binary tree means $x(|x\wedge y|) = 0$ and $y(|x\wedge y|) = 1$. A subset $A\subseteq 2^{<\omega}$ is an \emph{antichain} if no two distinct elements of $A$ are comparable. Notice that if $A$ is an antichain, then $\prec$ is a linear order on $A$. 

It is possible to build an antichain $Q\subseteq 2^{<\omega}$ so that $\langle Q, \prec \rangle \cong \langle \mathbb{Q}, <\rangle$, and we freely identify $Q$ with $\mathbb{Q}$. We now define the $4$-ary relation $R$ as follows. If $p\leq q \leq r \leq s\in \mathbb{Q}$, we set $R(p, q, r, s)$ iff $|p\wedge q| \leq |r\wedge s|$. We then have the following.
\vspace{3 mm}

\begin{prop}
The structure $\mathbf{Q}' := \langle \mathbb{Q}, \leq, R\rangle$ is a big Ramsey structure for $\langle \mathbb{Q}, \leq\rangle$.
\end{prop}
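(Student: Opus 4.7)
The plan is to verify the four conditions of Definition \ref{BRStructure}. Condition (1) is immediate: the reduct of $\mathbf{Q}'$ to the language $\{\leq\}$ is $\langle\mathbb{Q},\leq\rangle$ by construction.

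For condition (2), given a finite $\mathbf{A} = \{p_1 < \cdots < p_k\} \subseteq \mathbb{Q}$, an embedding $f: \mathbf{A} \to \mathbb{Q}$ picks out $k$ nodes of the antichain $Q \subseteq 2^{<\omega}$. The $L'$-expansion $\mathbf{Q}'\cdot f$ records both the linear order among these nodes and (via $R$) the total preorder on pairs $(p_i,p_j)$ induced by the meet heights $|p_i \wedge p_j|$. Equivalently, $\mathbf{Q}'\cdot f$ records the isomorphism type of the finite rooted binary tree generated by the meets of these $k$ nodes, together with the embedding of the $k$ nodes as its leaves. There are only finitely many such combinatorial types, and the classical enumeration of Devlin \cite{Dev} shows this number equals $T_k$, the $k$-th odd tangent number.

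For condition (4), I would show that for every $\eta \in \emb{\mathbb{Q}}$ and every one of the $T_k$ expansion types $\mathbf{A}^{*}$, there is $f \in \emb{\mathbf{A}, \mathbb{Q}}$ with $\im{\eta \circ f} \subseteq \im{\eta}$ and $\mathbf{Q}'\cdot (\eta \circ f) = \mathbf{A}^{*}$. Translated to the tree picture, every DLO subset of $\mathbb{Q}$ corresponds, after passing to a sufficiently rich subset of $Q$, to an antichain in $2^{<\omega}$ in which one can realize every prescribed labeled meet-tree on $k$ leaves. This lower bound is elementary: one explicitly builds a $k$-element antichain of the prescribed shape within $\im{\eta}$ using the density of $Q$ and the fact that $\eta$ necessarily contains antichains with arbitrary meet patterns above any finite level.

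For condition (3), the remaining content is the matching upper bound: the big Ramsey degree of $\mathbf{A}$ is at most $T_k$. This is exactly the content of Devlin's theorem \cite{Dev}, whose proof uses Milliken's tree Ramsey theorem \cite{Mil} applied to the binary tree $2^{<\omega}$ via the identification of $\mathbb{Q}$ with $Q$: any finite coloring of $\emb{\mathbf{A},\mathbb{Q}}$ can be refined, on some DLO copy of $\mathbb{Q}$, to a coloring which depends only on the $\mathbf{Q}'$-type. Combining with (4) yields big Ramsey degree exactly $T_k = |\mathbf{Q}'(\mathbf{A})|$. The only genuine obstacle is the upper bound via Milliken's theorem; the bulk of the proposition, beyond this invocation, is the bookkeeping that identifies the $R$-types above with Devlin's enumeration of labeled binary meet-trees, which is a direct unpacking of the definition of $R$.
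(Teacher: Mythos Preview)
The paper does not give a proof of this proposition; it is stated as a repackaging of Devlin's theorem, with the reader referred to \cite{Dev} and \cite{Tod} for the substance. Your outline is exactly the natural way to fill in the details: check the four clauses of Definition~\ref{BRStructure}, with Devlin's upper bound via Milliken's theorem \cite{Mil} doing the real work for clause~(3), and an explicit construction inside any DLO copy for the lower bound in clause~(4).

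One point deserves more care than you give it. The relation $R$ is defined for $p\leq q\leq r\leq s$, so in particular it compares $|p\wedge p|=|p|$ with $|r\wedge s|$; thus $R$ records not just the preorder on meet heights of \emph{pairs} but the full preorder on $\{|x_i|:i\leq k\}\cup\{|x_i\wedge x_j|:i<j\}$. For the count $|\mathbf{Q}'(\mathbf{A})|=T_k$ to come out exactly right, the antichain $Q$ must be chosen so that these heights are pairwise distinct (a ``strongly diagonal'' antichain in the terminology of \cite{Tod}); otherwise ties produce extra types and clause~(3) fails. The paper's construction leaves this implicit, and your proposal does too---you should make explicit that $Q$ is taken to be strongly diagonal, at which point the bijection between $\mathbf{Q}'$-types and Devlin's labeled binary trees on $k$ leaves becomes the direct unpacking you describe.
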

\vspace{3 mm}

We can then interpret the space $X_{\mathbf{Q}'}$ as a space of total pre-orders on $W:= \mathbb{Q}\cup [\mathbb{Q}]^2$. Let $L$ be a total pre-order of $W$, and let $E_L$ be the induced equivalence relation on $W$. Then $L\in X_{\mathbf{Q}'}$ iff $L|_{\mathbb{Q}}$ is a linear order, and given $a < b < c\in \mathbb{Q}$, we have $\neg E_L(\{a,b\}, \{b, c\})$, and $\{a, c\}$ is $E_L$-equivalent to the $L$-least of $\{a, b\}$ or $\{b, c\}$.

\subsection{The random graph}

The \emph{Random graph}, often called the \emph{Rado graph}, is the \fr limit of the class of all finite graphs. A countable graph $\langle Q, E\rangle$ is isomorphic to the Rado graph iff for any disjoint and finite $F_0, F_1\subseteq Q$, then there is $x\in Q\setminus (F_0\cup F_1)$ so that $\neg E(x, y)$ for each $y\in F_0$ and $E(x, z)$ for every $z\in F_1$. 

It can be shown that the big Ramsey degree of any  finite subgraph of the Rado graph is finite by using Milliken's tree theorem. To construct a big Ramsey structure, we follow the presentation of Laflamme, Sauer, and Vuksanovic \cite{LSV}. Once again, we consider the binary tree $2^{<\omega}$. We call a subset $T\subseteq 2^{<\omega}$ \emph{transversal} if $|x|\neq |y|$ for any distinct $x, y\in T$. If $T\subseteq 2^{<\omega}$ is transversal, we can give $T$ a graph structure $E$, where if $x, y\in T$ and $|x| < |y|$, we set $E(x, y)$ iff $y(|x|) = 1$. Now let $\langle Q, E\rangle$ be a Rado graph, and fix an enumeration $Q = \{q_n: n< \omega\}$. To each $q_n$, we associate an element $x_n\in 2^n$, where for $m < n< \omega$, we set $x_n(m) = 1$ iff $E(q_m, q_n)$. 

Theorem 7.6 from \cite{LSV} now gives us an unavoidable coloring for each finite subgraph. To turn this into a Ramsey structure, we need to perform one extra step. Find a subset $T\subseteq Q$ of the Rado graph so that $\langle T, E|_T\rangle$ is isomorphic to the Rado graph, and so that $\{x_n: q_n\in T\}\subseteq 2^{<\omega}$ is an antichain. By doing this, we can ensure that the collection of ``non-diagonal'' tuples as defined in \cite{LSV} is empty. 

To describe the resulting structure, it will be useful to instead assume that we have mapped $Q$ into a transversal antichain in $2^{<\omega}$ which respects the graph structure. With this identification, we now define $\prec$ and the $4$-ary relation $R$ as before.

\begin{prop}
The structure $\mathbf{Q}':= \langle Q, E, \prec, R\rangle$ is a big Ramsey structure for the Rado graph $\langle Q, E\rangle$.
\end{prop}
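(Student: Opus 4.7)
The plan is to verify the four clauses of Definition~\ref{BRStructure} for $\mathbf{Q}'$, using as a black box the main theorem of \cite{LSV} together with the antichain refinement just performed. Clause~(1), that $\mathbf{Q}'|_L = \mathbf{Q}$, is immediate since we only add $\prec$ and $R$ to the graph language. Clause~(2) is combinatorial: a finite $L'$-structure $\mathbf{A}'$ is determined by its graph type, its linear order, and the values of $R$ on its $4$-tuples, and a set of size $k$ admits only finitely many such expansions, so $\mathbf{Q}'(\mathbf{A})$ is finite for each $\mathbf{A} \in \mathcal{K}$.

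The substance of the proof is in clauses~(3) and~(4), and I would handle them simultaneously. The first step is to identify $\mathbf{Q}'(\mathbf{A})$ with the collection of ``tree-embedding types'' of $\mathbf{A}$ into a transversal antichain in $2^{<\omega}$, in the sense of \cite{LSV}. The relation $\prec$ records the lexicographic order on the tree, and $R(p,q,r,s)$ records whether $|p \wedge q| \leq |r \wedge s|$; together these reconstruct the isomorphism type of the finite subtree spanned by an embedded tuple, with its marked leaves and induced graph edges. Because $Q$ was placed as a transversal antichain after deleting the non-diagonal obstructions, this reconstruction is a bijection between $\mathbf{Q}'(\mathbf{A})$ and the set of diagonal tree-types appearing in \cite{LSV}.

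Given this identification, I would derive the upper bound in clause~(3) by invoking the Milliken-style argument of \cite{LSV}: for any coloring $\gamma \colon \mathrm{Emb}(\mathbf{A}, \mathbf{Q}) \to r$, there is $\eta \in \mathrm{Emb}(\mathbf{Q})$ on which $\gamma$ depends only on the tree-embedding type of $f$, which is exactly $\mathbf{Q}' \cdot (\eta \circ f)$; hence $|\{\gamma(\eta \circ f) : f \in \mathrm{Emb}(\mathbf{A}, \mathbf{Q})\}| \leq |\mathbf{Q}'(\mathbf{A})|$. For clause~(4) (which also provides the matching lower bound in~(3)), I would show that the canonical coloring $\gamma(f) := \mathbf{Q}' \cdot f$ is unavoidable, i.e.\ for every $\eta \in \widehat{G}$ and every $\mathbf{A}' \in \mathbf{Q}'(\mathbf{A})$, there is $f \in \mathrm{Emb}(\mathbf{A}, \mathbf{Q})$ with $\mathbf{Q}' \cdot (\eta \circ f) = \mathbf{A}'$. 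The point is that $\eta(Q)$, being an induced subgraph isomorphic to the Rado graph, contains a sub-antichain realizing every tree-type, so every $L'$-expansion of $\mathbf{A}$ appears.

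The main obstacle I expect is this last unavoidability statement. A general $\eta \in \widehat{G}$ is not a tree automorphism, so the nodes coding $\eta(Q)$ inside $2^{<\omega}$ can be tangled relative to the ordering $\prec$ and the meet structure $R$. The key will be to argue that after pulling back via $\eta$, the structure $\mathbf{Q}' \cdot \eta$ is again an antichain coding of the Rado graph of the same ``rich'' form as $\mathbf{Q}'$ — precisely the property the extra antichain step in the construction was arranged to preserve — and so, by universality of the Rado graph together with the diagonal extension properties used in \cite{LSV}, it realizes every $L'$-expansion. Once this is established, clauses~(3) and~(4) drop out together and $\mathbf{Q}'$ is a big Ramsey structure.
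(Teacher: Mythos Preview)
The paper does not supply a proof of this proposition; it is asserted as a consequence of Theorem~7.6 of \cite{LSV} together with the antichain refinement described in the paragraph preceding the statement. Your outline is therefore the correct way to unpack what the paper leaves implicit, and the verification of clauses~(1)--(4) via the identification of $\mathbf{Q}'(\mathbf{A})$ with the set of diagonal tree-types is exactly right.

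One comment on the obstacle you flag at the end: this is not an additional difficulty on top of the black box, but \emph{is} the black box. The lower-bound half of the \cite{LSV} result is precisely the statement that the coloring by diagonal tree-type is unavoidable, i.e.\ that every induced copy of the Rado graph inside $Q$ meets every diagonal type. You do not need to argue that $\mathbf{Q}'\cdot\eta$ retains any particular structural form; you only need that the subset $\eta(Q)\subseteq Q$, with the $\prec$ and $R$ inherited from $\mathbf{Q}'$, realizes every $L'$-expansion of $\mathbf{A}$, and this is the \cite{LSV} lower bound verbatim once your bijection between $\mathbf{Q}'(\mathbf{A})$ and diagonal types is in place. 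The antichain step is what makes that bijection exact (eliminating non-diagonal types so that $|\mathbf{Q}'(\mathbf{A})|$ matches the \cite{LSV} count), after which clause~(4) and the lower bound in clause~(3) require no further argument.
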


Similar to the example of the rationals, the space $X_{\mathbf{Q}'}$ can be described as a space of pairs $(L_0, L_1)$, where $L_0$ is a linear order of $Q$ and $L_1$ is a total preorder of $Q\cup [Q]^2$. Describing precisely which pairs are in the closure of the big Ramsey structure seems to be somewhat more difficult. 

\subsection{The orders $\mathbb{Q}_n$ and the tournament $\mathbf{S}(2)$}

The last examples we will consider are the dense local order $\mathbf{S}(2)$ and the orders $\mathbb{Q}_n$.  The dense local order is a countable \emph{tournament}, a directed graph $\langle S, E\rangle$ where for distinct $x, y\in S$ exactly one of $E(x,y)$ or $E(y, x)$ holds. One way to construct $\mathbf{S}(2)$ is to consider a countable dense set of points on the unit circle so that no two points are exactly $\pi$ radians apart. Then set $E(x, y)$ iff $y$ is less than $\pi$ radians counterclockwise from $x$. Then $\langle S, E\rangle$ is isomorphic to $\mathbf{S}(2)$.

The big Ramsey behavior of the structure $\mathbf{S}(2)$ is studied by Laflamme, Nguyen Van Th\'e, and Sauer in \cite{LNS}. The trick to analyzing $\mathbf{S}(2)$ is to instead analyze the structure $\mathbb{Q}_2 := \langle \mathbb{Q}, <, P_0, P_1\rangle$, where $\langle \mathbb{Q}, <\rangle$ is the rational order, and each $P_i$ is a dense subset of $\mathbb{Q}$ with $\mathbb{Q} = P_0\sqcup P_1$. The structures $\mathbb{Q}_n$ are defined similarly; they are rational orders with a distinguished partition into $n$ dense pieces. The authors of \cite{LNS} prove a slight generalization of Milliken's theorem to obtain big Ramsey results for the structures $\mathbb{Q}_n$, the ``colored'' version alluded to in the title of \cite{LNS}. However, once this is proven, the big Ramsey structures for $\mathbb{Q}_n$ are easy to describe; namely, if $\langle \mathbb{Q}, <, R\rangle$ is a big Ramsey structure for the rational order, then $\langle \mathbb{Q}, <, P_0,...P_{n-1}, R\rangle$ is a big Ramsey structure for $\mathbb{Q}_n$.

Using the big Ramsey structure for $\mathbb{Q}_2$, one obtains a big Ramsey structure for $\mathbf{S}(2)$ as follows. Represent $\mathbf{S}(2)$ as $\langle S, E\rangle$, where $S$ is a dense subset of the unit circle as before. Then we can view $\mathbb{Q}_2$ as a structure with underlying set $S$. We let $P_0$ be those points below the $x$-axis, and $P_1$ be the points above. Let $S^1$ be the unit circle; define the map $\phi: S\rightarrow S^1$ by setting $\phi(x) = x$ for $x\in P_0$ and $\phi(x) = x\cdot e^{i\pi}$ for $x\in P_1$. Note that $\phi$ is an injection with $\im{\phi}$ contained below the $x$-axis. Then for $x, y\in S$, we set $x < y$ iff $\phi(y)$ is to the right of $\phi(x)$. Then if $\mathbb{Q}_2' :=\langle S, <, P_0, P_1, R\rangle$ is a big Ramsey structure for $\mathbb{Q}_2$, then $\mathbf{S}' := \langle S, E, <, P_0, P_1, R\rangle$ is a big Ramsey structure for $\mathbf{S}(2)$.
\vspace{3 mm}

\section{Can we find big Ramsey structures?}
\label{ExpansionSection}
\vspace{3 mm}

Theorem \ref{BRUCA} makes it important to know when a \fr structure $\mathbf{K} = \flim{\mathcal{K}}$ admits a big Ramsey structure. An obvious necessary condition is that every $\mathbf{A}\in \mathcal{K}$ have finite big Ramsey degree. However, this seems far from sufficient. Suppose $\mathbf{K}'$ is a big Ramsey structure and $\mathbf{K} = \bigcup_n \mathbf{A}_n$, where each $\mathbf{A}_n$ has big Ramsey degree $R_n < \omega$. For each $n< \omega$, consider the coloring $\gamma_n: H_n\rightarrow \mathbf{K}'(\mathbf{A}_n)$, where for $s\in H_n$, we set $\gamma_n(s) = \mathbf{K}'\cdot s$. Then each $\gamma_n$ is an unavoidable $R_n$-coloring, and furthermore $\gamma_m \ll \gamma_n$ whenever $m \leq n$. As hinted in the discussion near the end of section \ref{SetsAndColorings}, this is actually sufficient.
\vspace{3 mm} 

\begin{theorem}
\label{LevelsBR}
Let $\mathbf{K} = \bigcup_n \mathbf{A}_n$ be a \fr structure, and suppose each $\mathbf{A}_m$ has finite big Ramsey degree $R_m < \omega$. Assume that for each $m <\omega$, there is an unavoidable $R_m$-coloring $\gamma_m$ of $H_m$ so that $\gamma_m \ll \gamma_n$ for each $m\leq n < \omega$. Then $\mathbf{K}$ admits a big Ramsey structure.
\end{theorem}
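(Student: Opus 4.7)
The plan is to build $\mathbf{K}'$ so that, for each $m$, the coloring $\Gamma_m \colon H_m \to \mathbf{K}'(\mathbf{A}_m)$ given by $\Gamma_m(f) = \mathbf{K}' \cdot f$ is equivalent to $\gamma_m$. First we pass to a subsequence so that the exhaustion has strictly increasing sizes, fix enumerations $A_m = \{a_{m,0}, \ldots, a_{m, |\mathbf{A}_m|-1}\}$, take $L' = L \cup \{R_{m,c} : m < \omega,\ c \in \ran{\gamma_m}\}$ with each $R_{m,c}$ of arity $|\mathbf{A}_m|$, and interpret
\[
R_{m,c}^{\mathbf{K}'}(b_0, \ldots, b_{|\mathbf{A}_m|-1}) \iff (a_{m,i} \mapsto b_i) \in H_m \text{ with } \gamma_m\text{-color } c.
\]
Condition~(1) of Definition~\ref{BRStructure} then holds by construction.

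The heart of the argument will be proving $\Gamma_n \sim \gamma_n$. One direction is tautological: the distinguished tuple $(a_{n,0}, \ldots, a_{n, |\mathbf{A}_n|-1})$ in $\mathbf{K}' \cdot f$ satisfies $R_{n,c}$ precisely when $\gamma_n(f) = c$, so $\Gamma_n$ refines $\gamma_n$. For the reverse $\Gamma_n \leq \gamma_n$, observe that $\mathbf{K}' \cdot f$ records, via each predicate $R_{m,c}$ with $m \leq n$ applied to a tuple encoding some $g \in H_m^n$, exactly the $\gamma_m$-color of $f \circ g$; the hypothesis $\gamma_m \ll \gamma_n$ says every one of these values is a function of $\gamma_n(f)$ alone. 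By the size assumption the predicates $R_{m,c}$ with $m > n$ are vacuous on $A_n$, and the $L$-reduct of $\mathbf{K}' \cdot f$ is always $\mathbf{A}_n$, so $\mathbf{K}' \cdot f$ is fully determined by $\gamma_n(f)$.

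With $\Gamma_n \sim \gamma_n$ in hand, conditions~(2)--(4) follow with light bookkeeping. For any $\mathbf{A} \in \mathcal{K}$ fix an embedding $j \colon \mathbf{A} \to \mathbf{A}_n$; by homogeneity of $\mathbf{K}$ every $h \in \emb{\mathbf{A}, \mathbf{K}}$ extends to some $f \in H_n$ with $h = f \circ j$, so $\mathbf{K}' \cdot h = (\mathbf{K}' \cdot f) \cdot j$ takes at most $R_n$ values, giving~(2). For~(3) and~(4) on $\mathbf{A}_n$, the equivalence $\Gamma_n \sim \gamma_n$ directly realizes the big Ramsey degree $R_n = |\mathbf{K}'(\mathbf{A}_n)|$. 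For a general $\mathbf{A}$ (which has finite big Ramsey degree $R_\mathbf{A}$ because it embeds into some $\mathbf{A}_n$), unavoidability of $h \mapsto \mathbf{K}' \cdot h$ transfers from $\gamma_n$ along the surjection $\hat{j} \colon H_n \to \emb{\mathbf{A}, \mathbf{K}}$, and the count of colors is pinned to exactly $R_\mathbf{A}$ by combining the unavoidable upper bound with Proposition~\ref{ColorRefine} applied to any unavoidable $R_\mathbf{A}$-coloring of $\emb{\mathbf{A}, \mathbf{K}}$, together with Lemma~\ref{StayUnavoidable}.

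The hard part will be the reverse refinement $\Gamma_n \leq \gamma_n$, which is exactly where the strong refinement hypothesis earns its keep: without $\gamma_m \ll \gamma_n$, two embeddings sharing a $\gamma_n$-color could be separated by a lower-level predicate and give distinct $L'$-expansions.
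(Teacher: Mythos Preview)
Your construction and the verification that $\Gamma_n \sim \gamma_n$ are correct, and your treatment of conditions~(2)--(4) for the structures $\mathbf{A}_n$ themselves is fine. The gap is in the passage to a general $\mathbf{A}\in\mathcal{K}$ not isomorphic to any $\mathbf{A}_m$. You correctly obtain $|\mathbf{K}'(\mathbf{A})|\leq R_\mathbf{A}$ from unavoidability of $\Gamma_\mathbf{A}$, but your sketch for the reverse inequality does not go through: Proposition~\ref{ColorRefine} requires the \emph{second} coloring to already be an unavoidable $R_\mathbf{A}$-coloring, so feeding in $\Gamma_\mathbf{A}$ there is circular, while feeding in $\delta$ only reproduces the upper bound. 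And this is not merely a missing citation: your $\Gamma_\mathbf{A}$ records only the data $\gamma_m(h\circ g)$ for embeddings $g\colon \mathbf{A}_m\to\mathbf{A}$, i.e.\ only information coming from those $\mathbf{A}_m$ that embed \emph{into} $\mathbf{A}$. If, say, the exhaustion skips the cardinality $|A|$ (which your passage to a strictly increasing subsequence can force) and all smaller $\mathbf{A}_m$ have big Ramsey degree~$1$, then $\Gamma_\mathbf{A}$ is constant while $R_\mathbf{A}$ may well exceed~$1$. So your $\mathbf{K}'$ can genuinely fail condition~(3) of Definition~\ref{BRStructure}.

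The paper handles exactly this point by refusing to work only with the $\mathbf{A}_n$: it produces for \emph{every} finite $\mathbf{B}\subseteq\mathbf{K}$ an unavoidable $R_\mathbf{B}$-coloring $\gamma_\mathbf{B}$ of $\emb{\mathbf{B},\mathbf{K}}$ with $\gamma_\mathbf{B}\ll\gamma_\mathbf{C}$ whenever $\mathbf{B}\leq\mathbf{C}$, and then adds to $L'$ a relation for each pair $(\mathbf{B},j)$. The coloring $\gamma_\mathbf{B}$ is built ``from above'' --- via connected components of the graph on $H_\mathbf{B}$ in which $f$ and $h$ are adjacent when they admit extensions to $\mathbf{A}_n$ of the same $\gamma_n$-color --- and is shown to have exactly $R_\mathbf{B}$ colors by a reset argument using Lemma~\ref{ColorCompletion}: one takes an unavoidable $R_\mathbf{B}$-coloring $\delta$, shifts so that $\delta\cdot\eta\ll\gamma_n\cdot\eta$, then applies $p$ with $\gamma_n\cdot\eta\cdot p=\gamma_n$ to conclude $\delta\cdot\eta\cdot p\ll\gamma_n$ while preserving $R_\mathbf{B}$ colors; maximality of $\gamma_\mathbf{B}$ among colorings strongly refined by $\gamma_n$ then forces $\gamma_\mathbf{B}\sim\delta\cdot\eta\cdot p$. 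Your ``look-down'' coloring $\Gamma_\mathbf{A}$ is in general only $\leq\gamma_\mathbf{A}$, not equivalent to it, which is why the argument must proceed through the larger family.
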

\vspace{3 mm}

While fairly intuitive, the proof of Theorem \ref{LevelsBR} is surprisingly involved and will be relegated to the appendix. One way of interpreting Theorem \ref{LevelsBR} is that it justifies our approach of always fixing an exhaustion $\mathbf{K} = \bigcup_n \mathbf{A}_n$ and only paying attention to the $\mathbf{A}_n$. This ``non-hereditary'' approach can be formalized using the notion of a \emph{Fra\"iss\'e--HP} class, that is a class of finite structures satisfying every property of being a \fr class except perhaps the hereditary property. If $\mathcal{K}$ is a Fra\"iss\'e--HP class,  we can still form the \fr limit $\mathbf{K} = \flim{\mathcal{K}}$. This structure has the property that for any $\mathbf{A}\subseteq \mathbf{K}$ with $\mathbf{A}\in \mathcal{K}$ and embedding $f: \mathbf{A}\rightarrow \mathbf{K}$, there is $g\in \aut{\mathbf{K}}$ with $g|_\mathbf{A} = f$. So if $\mathbf{K} = \bigcup_n{\mathbf{A}_n}$ is a \fr structure, then the class $\mathcal{K} := \{\mathbf{A}: \exists n< \omega (\mathbf{A}\cong \mathbf{A}_n)\}$ is a Fra\"iss\'e--HP class. In fact, we will use this so frequently that we now \textbf{adopt it as a notational convention}: whenever $\mathbf{K} = \bigcup_n \mathbf{A}_n$ is a \fr structure with a fixed exhaustion and we write $\mathbf{K} = \flim{\mathcal{K}}$, we intend that $\mathcal{K} := \{\mathbf{A}: \exists n< \omega (\mathbf{A}\cong \mathbf{A}_n)\}$.

For the rest of the section, fix a \fr $L$-structure $\mathbf{K} = \bigcup_n \mathbf{A}_n = \flim{\mathcal{K}}$, with $G = \aut{\mathbf{K}}$ and $\widehat{G} = \emb{\mathbf{K}}$, and assume that each $\mathbf{A}_n$ has big Ramsey degree $R_n < \omega$. The theme of this section is that even if we don't know that $\mathbf{K}$ admits a big Ramsey structure, we can say many things about what such a structure must look like. We will show that $\mathcal{K}$ admits a unique \emph{big Ramsey expansion class} $\mathcal{K}'$; if $\mathbf{K}'$ is a big Ramsey structure for $\mathbf{K}$, then $\mathcal{K}'$ will be in a suitable sense isomorphic to $\age{\mathbf{K}'}$. 

If we add some extra assumptions to $\mathbf{K}$, we can say much more about the big Ramsey expansion $\mathcal{K}'$ of $\mathcal{K}$. If we know that $G$ is \emph{Roelcke precompact}, then we will show that $\mathcal{K}'$ is itself a Fra\"iss\'e--HP class. This is simultaneously fascinating structural information and a frustrating obstacle to actually constructing a big Ramsey structure; we will discuss this at the end of the section. 

\subsection{The big Ramsey expansion class}
An \emph{expansion} of $\mathcal{K}$ is a class $\mathcal{K}'$ of $L'$-structures for some language $L'\supseteq L$ satisfying the following requirements.

\begin{enumerate}
\item
If $\mathbf{A}'\in \mathcal{K}'$, then $\mathbf{A}'|_L \in \mathcal{K}$.

\item
Every $\mathbf{A}\in \mathcal{K}$ admits an \emph{expansion}, a structure $\mathbf{A}'\in \mathcal{K}'$ with $\mathbf{A}'|_L = \mathbf{A}$. We write $\mathcal{K}'(\mathbf{A}) = \{\mathbf{A}'\in \mathcal{K}': \mathbf{A}'|_L = \mathbf{A}\}$.

\item
Suppose $\mathbf{A}\in \mathcal{K}$ and $\mathbf{B}'\in \mathcal{K}'$ with $\mathbf{B} = \mathbf{B}'|_L$. Then if $f: \mathbf{A}\rightarrow \mathbf{B}$ is an embedding, we have $\mathbf{B}\cdot f\in \mathcal{K}'(\mathbf{A})$.
\end{enumerate}

An expansion $\mathcal{K}'$ of $\mathcal{K}$ is called \emph{precompact} if $\mathcal{K}'(\mathbf{A})$ is finite for each $\mathbf{A}\in \mathcal{K}$. The expansion $\mathcal{K}'$ is called \emph{reasonable} if for every embedding $f: \mathbf{A}\rightarrow \mathbf{B}$ with $\mathbf{A}, \mathbf{B}\in \mathcal{K}$, the dual map $\hatf: \mathcal{K}'(\mathbf{B})\rightarrow \mathcal{K}'(\mathbf{A})$ given by $\hatf(\mathbf{B}') = \mathbf{B}'\cdot f$ is surjective. We are overloading the ``dual map'' notation, but the context should typically be clear. It is worth pointing out that to check if an expansion $\mathcal{K}'$ is precompact, it suffices to check that $\mathcal{K}'(\mathbf{A}_n)$ is finite for each $n< \omega$. Similarly, to check if $\mathcal{K}'$ is reasonable, it suffices to check the surjectivity of each $\hatf$ for each $f\in H_m^n$ and $m\leq n< \omega$.

If $\mathcal{K}'$ and $\mathcal{K}^*$ are expansions of $\mathcal{K}$ in languages $L'$ and $L^*$, respectively, then a \emph{map of expansions} from $\mathcal{K}'$ to $\mathcal{K}^*$ is a map $\Phi: \bigcup_n \mathcal{K}'(\mathbf{A}_n)\rightarrow \bigcup_n \mathcal{K}^*(\mathbf{A}_n)$ satisfying the following.

\begin{enumerate}
\item
If $\mathbf{A}_n'\in \mathcal{K}'$ is an expansion of $\mathbf{A}_n$, then $\Phi(\mathbf{A}_n')\in \mathcal{K}^*$ is also an expansion of $\mathbf{A}_n$.

\item
If $m\leq n <\omega$ and $f\in H_m^n$, then $\Phi(\mathbf{A}_n'\cdot f) = \Phi(\mathbf{A}_n')\cdot f$.
\end{enumerate} 

We call $\mathcal{K}'$ and $\mathcal{K}^*$ \emph{isomorphic expansions} if there is a bijective (equivalently invertible) map of expansions from $\mathcal{K}'$ to $\mathcal{K}^*$. Much more on Fra\"iss\'e--HP classes and their expansions can be found in section 5 of \cite{Z}. One caution to the interested reader: the definition of expansion given there is missing the analog of item (3) from the definition here. This property of expansions is used implicitly throughout \cite{Z} and needs to be included in the definition. I thank Aleksandra Kwiatkowska for pointing this out to me.

We can now begin working towards the definition of the big Ramsey expansion of $\mathcal{K}$.
\vspace{3 mm}

\begin{defin}\mbox{}
\label{DiagramsDef}
\begin{enumerate}
\item
Suppose $m\leq n < \omega$. An \emph{$(m,n)$-diagram} is any map $D: J_n\times H_m^n\rightarrow J_m$ such that $J_n$ and $J_m$ are finite and so that for every $f\in H_m^n$, the map $D(-, f): J_n\rightarrow J_m$ is surjective.

\item
Let $D_J: J_n\times H_m^n\rightarrow J_m$ and $D_I: I_n\times H_m^n\rightarrow I_m$ be $(m,n)$-diagrams. An \emph{isomorphism of $(m,n)$-diagrams}, written $\sigma: D_J\Rightarrow D_I$, is a pair $\sigma := (\sigma_m, \sigma_n)$ of bijections  $\sigma_m: J_m\rightarrow I_m$ and $\sigma_n: J_n\rightarrow I_n$ so that the following diagram commutes.

\begin{center}
\begin{tikzpicture}[node distance=4cm, auto]
\node (A) {$J_n\times H_m^n$};
\node (B) [right of=A] {$I_n\times H_m^n$};
\node (C) [node distance=2cm, below of=A] {$J_m$};
\node (D) [right of=C] {$I_m$};
\draw[->] (A) to node {$\sigma_n\times 1$} (B);
\draw[->] (A) to node [swap] {$D_J$} (C);
\draw[->] (B) to node {$D_I$} (D);
\draw[->] (C) to node {$\sigma_m$} (D);
\end{tikzpicture}
\end{center}

\item
Let $r \leq \omega$. An \emph{$r$-diagram based on $\{J_n: n< r\}$} is a collection $D= \{D(m,n): m\leq n < r\}$ satisfying the following properties.

\begin{enumerate}
\item
Each $J_n$ is a finite set so that for every $m\leq n< r$, $D(m,n): J_n\times H_m^n\rightarrow J_m$ is an $(m,n)$-diagram. Furthermore, $|J_0| = 1$.

\item
If $m\leq n\leq N < r$, $f\in H_m^n$, $s\in H_n^N$, and $j\in J_N$, then $D(m,N)(j, s\circ f) = D(m,n)(D(n,N)(j, s), f)$.
\end{enumerate}

\item
Let $D_J = \{D_J(m,n): m\leq n< r\}$ and $D_I = \{D_I(m,n): m\leq n< r\}$ be $r$-diagrams based on $\{J_n: n< r\}$ and $\{I_n: n< r\}$, respectively. An \emph{isomorphism of $r$-diagrams} $\sigma: D_J\Rightarrow D_I$ is a tuple $\sigma := \{\sigma_n: n< r\}$ so that for every $m\leq n < r$, $(\sigma_m, \sigma_n): D_J(m,n)\Rightarrow D_I(m,n)$ is an isomorphism. 

\item
If $r\leq \omega$, $N\leq r$, and $D := \{D(m,n): m\leq n< r\}$ is an $r$-diagram, then the \emph{restriction} of $D$ to $N$ is the $N$-diagram $D|_N:= \{D(m,n): m\leq n< N\}$.
\end{enumerate}
\end{defin}
\vspace{3 mm}

\begin{exa}\mbox{}
\label{DiagramsExa}
\begin{enumerate}
\item
Suppose $r< \omega$, and let $\gamma_0 \ll \gamma_1 \ll \cdots \ll \gamma_{r-1}$ be colorings of $H_0$,...,$H_{r-1}$, respectively. For $m\leq n < r$, the \emph{diagram} of $\gamma_m \ll \gamma_n$ is the map $D(\gamma_m, \gamma_n): \im{\gamma_n}\times H_m^n \rightarrow \im{\gamma_m}$ where given $f\in H_m^n$ and $j\in \im{\gamma_n}$, we set $D(\gamma_m, \gamma_n)(j, f) = i$ if for any $s\in H_n$ with $\gamma_n(s) = j$, we have $\gamma_m(s\circ f) = i$. The \emph{diagram} of $\gamma:= \{\gamma_0,...,\gamma_{r-1}\}$ is the collection $D_\gamma := \{D(\gamma_m, \gamma_n): m\leq n< r\}$

\item
If $\mathcal{K}'$ is a reasonable, precompact expansion of $\mathcal{K}$ and $m\leq n< \omega$, then the diagram $D_{\mathcal{K}'}(m, n): \mathcal{K}'(\mathbf{A}_n)\times H_m^n\rightarrow \mathcal{K}'(\mathbf{A}_m)$ is given by $D_{\mathcal{K}'}(m, n)(\mathbf{A}_n', f) = \mathbf{A}_n'\cdot f$. We form an $\omega$-diagram by setting $D_\mathcal{K}' = \{D_\mathcal{K}'(m,n): m\leq n < \omega\}$. 

\item
Suppose $\mathcal{K}'$ and $\mathcal{K}^*$ are two reasonable, precompact expansions of $\mathcal{K}$. Then $\mathcal{K}'$ and $\mathcal{K}^*$ are isomorphic expansions iff $D_{\mathcal{K}'}$ and $D_{\mathcal{K}^*}$ are isomorphic diagrams.
\end{enumerate}
\end{exa}
\vspace{3 mm}

\begin{prop}
\label{EqualDiagrams}
Suppose $r <\omega$. Let $\gamma := \{\gamma_0\ll\cdots \ll \gamma_{r-1}\}$ and $\delta:= \{\delta_0 \ll\cdots \ll \delta_{r-1}\}$, where for each $n< r$, we have $\gamma_n$ and $\delta_n$ unavoidable $R_n$ colorings. Then $D_\gamma$ and $D_\delta$ are isomorphic $r$-diagrams.
\end{prop}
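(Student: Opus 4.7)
My plan is to reduce the claim to finding a single $\eta \in \widehat{G}$ for which $\gamma_m \cdot \eta \sim \delta_m \cdot \eta$ holds simultaneously for every $m < r$; once such an $\eta$ is found, the diagram isomorphism essentially writes itself. I would construct $\eta$ by induction on the level, building an ascending sequence $\eta_0, \eta_1, \ldots, \eta_{r-1}$ with $\eta_m := \eta_{m-1}\zeta_m$. Since $\mathbf{A}_0 = \emptyset$, the base case $\eta_0 = 1_G$ is trivial. At step $m$, consider the product coloring $(\gamma_m * \delta_m) \cdot \eta_{m-1}$ on $H_m$ and apply Lemma \ref{MakeUnavoidable} to choose $\zeta_m \in \widehat{G}$ so that $(\gamma_m * \delta_m) \cdot \eta_m$ is unavoidable. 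This coloring refines each of $\gamma_m \cdot \eta_m$ and $\delta_m \cdot \eta_m$, and the latter two are themselves unavoidable $R_m$-colorings by Lemma \ref{StayUnavoidable}; since $R_m$ is the big Ramsey degree of $\mathbf{A}_m$, no unavoidable coloring of $H_m$ has more than $R_m$ colors, so all three must be equivalent, yielding $\gamma_m \cdot \eta_m \sim \delta_m \cdot \eta_m$. Set $\eta := \eta_{r-1}$.

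With $\eta$ in hand, define $\sigma_m : \mathrm{im}(\gamma_m) \to \mathrm{im}(\delta_m)$ by $\sigma_m(\gamma_m \cdot \eta(f)) := \delta_m \cdot \eta(f)$ for $f \in H_m$. Well-definedness, injectivity, and surjectivity all follow from $\gamma_m \cdot \eta \sim \delta_m \cdot \eta$ together with the observation that an unavoidable coloring uses every color in its image even after right-multiplication. To verify that $\sigma = \{\sigma_m : m < r\}$ is an isomorphism $D_\gamma \Rightarrow D_\delta$, fix $m \leq n < r$, $j \in \mathrm{im}(\gamma_n)$, and $f \in H_m^n$. Pick $s \in H_n$ with $\gamma_n \cdot \eta(s) = j$; then $\eta \circ s$ is an admissible witness in the definition of $D(\gamma_m, \gamma_n)(j, f)$, so $D(\gamma_m, \gamma_n)(j, f) = \gamma_m \cdot \eta(s \circ f)$. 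The same $s$ witnesses $D(\delta_m, \delta_n)(\sigma_n(j), f) = \delta_m \cdot \eta(s \circ f)$, and at this point the required commutation collapses to the very definition of $\sigma_m$.

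The main obstacle I anticipate is ensuring that the inductive construction of $\eta$ at step $m$ does not spoil the equivalences already obtained at levels $k < m$. This is handled precisely by Lemma \ref{DynamicRefine}: the refinement pre-order is preserved under right multiplication, hence so is the equivalence relation $\sim$, and the inductive invariant $\gamma_k \cdot \eta_m \sim \delta_k \cdot \eta_m$ for $k < m$ passes through unscathed when we right-multiply $\eta_{m-1}$ by $\zeta_m$. The hypothesis that each $\mathbf{A}_m$ has finite big Ramsey degree $R_m$ enters at exactly one place, as the absolute ceiling that forces the unavoidable product $(\gamma_m * \delta_m) \cdot \eta_m$ to be equivalent to each of its factors.
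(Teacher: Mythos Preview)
Your argument is correct and follows essentially the same route as the paper. The paper's proof is terser: it observes that $D(\gamma_m,\gamma_n)=D(\gamma_m\cdot\eta,\gamma_n\cdot\eta)$ and then says ``use Proposition~\ref{ColorRefine} several times'' to obtain a single $\eta$ with $\gamma_n\cdot\eta\sim\delta_n\cdot\eta$ for all $n<r$, whereas you have unpacked exactly what ``several times'' means as an explicit induction via product colorings and Lemma~\ref{MakeUnavoidable}, together with the preservation step via Lemma~\ref{DynamicRefine}.
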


\begin{proof}
Notice first that if $\eta\in \widehat{G}$ and $m\leq n< r$, then $D(\gamma_m, \gamma_n) = D(\gamma_m\cdot \eta, \gamma_n\cdot \eta)$, and similarly for $\delta_m$ and $\delta_n$. Use Proposition \ref{ColorRefine} several times to find $\eta\in \widehat{G}$ with $\gamma_n\cdot \eta \sim \delta_n\cdot \eta$ for every $n< r$. For $n< r$, let $\sigma_n: R_n\rightarrow R_n$ be the bijections so that $\sigma_n\circ \gamma_n\cdot \eta = \delta_n\cdot \eta$, and set $\sigma := \{\sigma_n: n< r\}$. Now for any $m\leq n< r$, we have the following commutative diagram, showing that $\sigma: D_\gamma\Rightarrow D_\delta$ is an isomorphism.
\[
\begin{tikzpicture}[node distance=4cm, auto]
\node (A) {$R_n\times H_m^n$};
\node (B) [right of=A] {$R_n\times H_m^n$};
\node (C) [node distance=2cm, below of=A] {$R_m$};
\node (D) [right of=C] {$R_m$};
\draw[->] (A) to node {$\sigma_n\times 1$} (B);
\draw[->] (A) to node [swap] {$D(\gamma_m, \gamma_n)$} (C);
\draw[->] (B) to node {$D(\delta_m, \delta_n)$} (D);
\draw[->] (C) to node {$\sigma_m$} (D);
\end{tikzpicture}
\qedhere
\]
\qedhere
\end{proof}
\vspace{0 mm}

\begin{rem}
If $r< \omega$, we write $D_r$ for any $r$-diagram based on $\{R_n: n< r\}$ isomorphic to $D_\gamma$ as in Proposition \ref{EqualDiagrams}.
\end{rem}
\vspace{3 mm}

\begin{prop}
\label{OmegaDiagrams}
There is up to isomorphism a unique $\omega$-diagram $D_\omega$ with $D_\omega|_r \cong D_r$ for every $r< \omega$. 
\end{prop}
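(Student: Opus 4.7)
The plan is to prove existence by a level-by-level extension procedure packaged via König's lemma, and uniqueness by a standard compactness argument in a product of finite symmetric groups.

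For existence, I would first establish an extension lemma: given an $r$-diagram $D$ based on $\{R_n : n < r\}$ with $D \cong D_r$, there is an $(r+1)$-diagram $D'$ based on $\{R_n : n \le r\}$ with $D'|_r = D$ (as literal equality of functions) and $D' \cong D_{r+1}$. To build $D'$, I would fix isomorphisms $\rho : D_r \Rightarrow D$ and $\tau : D_{r+1}|_r \Rightarrow D_r$, set $\sigma_n := \rho_n \tau_n$ for $n < r$ and $\sigma_r := \mathrm{id}_{R_r}$, and define $D'(m, n)(j, f) := \sigma_m(D_{r+1}(m, n)(\sigma_n^{-1}(j), f))$ for $m \le n \le r$. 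The coherence condition of Definition \ref{DiagramsDef}(3)(b) for $D'$ follows from that of $D_{r+1}$ by direct substitution, surjectivity of each $D'(-, f)$ is inherited from $D_{r+1}$, and by construction $\sigma : D_{r+1} \Rightarrow D'$ is an isomorphism while $D'|_r = D$ since $\rho\tau$ is already an isomorphism $D_{r+1}|_r \Rightarrow D$.

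With the extension lemma in hand, I would form the tree whose nodes at level $r$ are the $r$-diagrams based on $\{R_n : n < r\}$ isomorphic to $D_r$, where an $(r+1)$-diagram $D'$ is declared a child of $D$ whenever $D'|_r = D$. The extension lemma ensures every node has a child, and finiteness of each $R_n$ and $H_m^n$ ensures that each level is finite and each node has finitely many children. König's lemma then produces an infinite branch, whose union is an $\omega$-diagram $D_\omega$ with $D_\omega|_r \cong D_r$ for every $r$.

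For uniqueness, suppose $D$ and $D^*$ are two $\omega$-diagrams with $D|_r \cong D_r \cong D^*|_r$ for all $r$. Consider the compact space $\Sigma := \prod_{n < \omega} \mathrm{Sym}(R_n)$, and for each $r$ let $I_r \subseteq \Sigma$ be the set of tuples $(\sigma_n)_{n < \omega}$ whose first $r$ components form an isomorphism $D|_r \Rightarrow D^*|_r$. Each $I_r$ is clopen in $\Sigma$ (depending only on coordinates below $r$), nonempty by hypothesis, and $I_{r+1} \subseteq I_r$ because the restriction of an $(r+1)$-diagram isomorphism is an $r$-diagram isomorphism. By compactness $\bigcap_r I_r \neq \emptyset$, and any element of this intersection is, by definition, an isomorphism $D \Rightarrow D^*$. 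The main obstacle is the extension lemma: one must carefully transport the full structure of $D_{r+1}$ along an isomorphism to produce a genuine extension of a prescribed $D$. Once that is in place, both halves of the proposition are routine finiteness arguments driven by finiteness of each $R_n$.
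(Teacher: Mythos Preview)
Your proof is correct and follows essentially the same approach as the paper: existence via K\"onig's lemma on the tree of $r$-diagrams isomorphic to $D_r$, and uniqueness via a finitely-branching tree (equivalently, compactness) of partial isomorphisms. The only minor difference is that you prove an explicit extension lemma to show every node has a child, whereas the paper simply observes that each level of the tree is nonempty (since $D_{r+1}|_r \cong D_r$), which already suffices for K\"onig's lemma; your extra work is sound but not strictly needed.
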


\begin{proof}
Consider the tree $T$ where level $r$ is the collection of $r$-diagrams based on $\{R_n: n< r\}$ isomorphic to $D_r$. This is an infinite finitely branching tree, so by K\"onig's lemma, T has an infinite branch $B$. Set $D_\omega := \bigcup_n B(n)$. 

Suppose $D$ is another $\omega$-diagram with $D|_r \cong D_r$ for every $r< \omega$. Consider the tree $T'$ where level $r$ is the collection of isomorphisms $(\sigma_0,...,\sigma_{r-1}): D_\omega|_r\Rightarrow D|_r$. This is also an infinite finitely branching tree, so let $B'\subseteq T'$ be an infinite branch. Then setting $\sigma:= \bigcup_n B'(n)$, then $\sigma: D_\omega\Rightarrow D$ is the desired isomorphism.
\end{proof}
\vspace{3 mm}

\begin{rem}
We call $D_\omega$ the \emph{big Ramsey diagram} of $\mathcal{K}$. We usually take $D_\omega$ to be based on $\{R_n: n< \omega\}$ unless otherwise specified. 
\end{rem}
\vspace{3 mm}

\begin{prop}
\label{DiagramExpansion}
Let $D := \{D(m,n): m\leq n< \omega\}$ be an $\omega$-diagram. Then there is up to isomorphism a unique reasonable, precompact expansion $\mathcal{K}'$ of $\mathcal{K}$ so that $D\cong D_{\mathcal{K}'}$.
\end{prop}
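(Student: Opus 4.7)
The plan is to dispatch uniqueness quickly and then construct the expansion by a level-by-level induction.

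For uniqueness, if $\mathcal{K}'$ and $\mathcal{K}^*$ are two reasonable, precompact expansions with both $D_{\mathcal{K}'}\cong D$ and $D_{\mathcal{K}^*}\cong D$, then $D_{\mathcal{K}'}\cong D_{\mathcal{K}^*}$ by transitivity of diagram isomorphism, and Example \ref{DiagramsExa}(3) immediately gives that $\mathcal{K}'$ and $\mathcal{K}^*$ are isomorphic expansions.

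For existence, I would fix a language $L' \supseteq L$ with an infinite supply of fresh relation symbols of every finite arity and inductively construct, for each $n < \omega$ and each $j \in J_n$, an $L'$-structure $\mathbf{A}_n^{(j)}$ on the set $A_n$ satisfying: (i) $\mathbf{A}_n^{(j)}|_L = \mathbf{A}_n$; (ii) for every $m \leq n$ and every $f \in H_m^n$, $\mathbf{A}_n^{(j)} \cdot f = \mathbf{A}_m^{(D(m,n)(j,f))}$; (iii) the $|J_n|$ structures $\{\mathbf{A}_n^{(j)} : j \in J_n\}$ are pairwise distinct as $L'$-structures on $A_n$. Then $\mathcal{K}'$ would be the isomorphism closure of $\{\mathbf{A}_n^{(j)} : n<\omega,\, j \in J_n\}$. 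Condition (ii) forces each dual map $\hatf\colon \mathcal{K}'(\mathbf{A}_n) \to \mathcal{K}'(\mathbf{A}_m)$ to coincide with $D(m,n)(-,f)$, so surjectivity of the latter yields reasonableness; finiteness of each $J_n$ yields precompactness; and $D_{\mathcal{K}'} \cong D$ by construction.

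The inductive step relies on the cocycle condition from Definition \ref{DiagramsDef}(3)(b): the restrictions $\mathbf{A}_n^{(j)} \cdot f$ prescribed by (ii) for different $f$'s are mutually consistent because restriction along a composition $f \circ g$ factors through the diagram in two equivalent ways. So these prescriptions can be glued into a well-defined interpretation on $A_n$ of the symbols fixed at earlier levels, together with a fresh batch of level-$n$ symbols used to distinguish distinct $j \in J_n$ that are not already separated by their restrictions; these fresh symbols are empty on every $\mathbf{A}_m$ with $m<n$, so (ii) is preserved when passing back down. The main obstacle I anticipate is the interaction with non-trivial automorphisms of $\mathbf{A}_n$: if $\sigma \in \aut{\mathbf{A}_n}$, then viewing $\sigma \in H_n^n$, condition (ii) forces $\mathbf{A}_n^{(j)} \cdot \sigma = \mathbf{A}_n^{(D(n,n)(j,\sigma))}$, so the tags at level $n$ must be assigned $\aut{\mathbf{A}_n}$-equivariantly on $J_n$. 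The cocycle condition, applied to $\sigma$ and $\sigma^{-1}$, guarantees that $D(n,n)(-,\sigma)$ is an honest action of $\aut{\mathbf{A}_n}$ on $J_n$, so equivariant tags can always be chosen by first selecting orbit representatives.
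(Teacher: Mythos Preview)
Your uniqueness argument is exactly the paper's. Your existence argument is correct in outline but takes a more roundabout path than the paper's.

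The paper avoids induction entirely. After fixing an enumeration $K=\{a_i:i<\omega\}$ with $A_n=\{a_0,\dots,a_{r_n-1}\}$, it introduces for each $n$ and each $q\in J_n$ a single $r_n$-ary symbol $S_n^q$, and then defines $\mathbf{A}_n^q$ in one stroke: for $m\le n$, $p\in J_m$, and an $r_m$-tuple $(b_0,\dots,b_{r_m-1})$ from $A_n$, declare $(S_m^p)^{\mathbf{A}_n^q}(b_0,\dots,b_{r_m-1})$ iff the map $f\colon a_i\mapsto b_i$ lies in $H_m^n$ and $D(m,n)(q,f)=p$. The cocycle identity (Definition~\ref{DiagramsDef}(3)(b)) then gives $\mathbf{A}_n^q\cdot f=\mathbf{A}_m^{D(m,n)(q,f)}$ directly, and the map $\sigma_n\colon q\mapsto \mathbf{A}_n^q$ assembles into the desired isomorphism $D\Rightarrow D_{\mathcal{K}'}$.

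Your automorphism concern dissolves in this framing: for $\sigma\in\aut{\mathbf{A}_n}\subseteq H_n^n$, the cocycle identity applied with $m=n$ yields $\mathbf{A}_n^q\cdot\sigma=\mathbf{A}_n^{D(n,n)(q,\sigma)}$ automatically, with no need to choose equivariant tags by hand. Likewise, the paper does not economize on symbols (one per element of each $J_n$, regardless of whether lower-level restrictions already separate them), which is what lets it skip the delicate ``introduce fresh symbols only when needed'' bookkeeping. If you pursue your inductive route and spell out how the level-$n$ symbols are interpreted on $\mathbf{A}_N^{(j')}$ for $N>n$, you will find yourself writing down exactly the paper's formula; the induction is scaffolding that can be removed.
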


\begin{proof}
Suppose $D$ is based on $\{J_n: n< \omega\}$. For each $n< \omega$, set $r_n = |\mathbf{A}_n|$, and fix an enumeration $K := \{a_i: i< \omega\}$ so that $A_n = \{a_0,...,a_{r_n-1}\}$. We form a language $L'\supseteq L$ by introducing for each $q\in J_n$ an $r_n$-ary relation symbol $S_n^q$. If $n< \omega$ and $q\in J_n$, we define the expansion $\mathbf{A}_n^q$ of $\mathbf{A}_n$ as follows. Let $m\leq n$, and let $b_0,...,b_{r_m-1}\in A_n$ be an $r_m$-tuple. If $p\in J_m$, then $(S_m^p)^{\mathbf{A}_n^q}(b_0,...,b_{r_m-1})$ holds iff the map $f: A_m\rightarrow A_n$ with $f(a_i) = b_i$ is in $H_m^n$ and if $D(m,n)(q, f) = p$. We then set $\mathcal{K}' = \{\mathbf{A}': \exists n< \omega \exists q\in J_n (\mathbf{A}'\cong \mathbf{A}_n^q)\}$. 

The key observation about this definition is as follows. Suppose $\mathbf{A}'\in \mathcal{K}$, and let $b_0,...,b_{r_m-1}\in A'$ be an $r_m$-tuple. If $p\in J_m$ and $(S_m^p)^{\mathbf{A}'}(b_0,...,b_{r_m-1})$ holds, then the map $f: A_m\rightarrow A'$ with $f(a_i) = b_i$ is an embedding of $\mathbf{A}_m$ into $\mathbf{A}'$. Furthermore, we have $\mathbf{A}'\cdot f = \mathbf{A}_m^p$; this is a simple consequence of item (3b) of Definition \ref{DiagramsDef}. With this observation, checking that $\mathcal{K}'$ works is simple. Since each $J_n$ is finite, $\mathcal{K}'$ is precompact. Since for every $f\in H_m^n$ the map $D(m,n)(-, f)$ is surjective, $\mathcal{K}'$ is reasonable. Define $\sigma := \{\sigma_n: n< \omega\}$, where $\sigma_n: J_n\rightarrow \mathcal{K}'(\mathbf{A}_n)$ and $\sigma_n(q) = \mathbf{A}_n^q$; then $\sigma: D\Rightarrow D_{\mathcal{K}'}$ is an isomorphism. 

Now suppose $\mathcal{K}^*$ is another expansion of $\mathcal{K}$ with $D\cong D_{\mathcal{K}^*}$. Then $D_{\mathcal{K}^*}\cong D_{\mathcal{K}'}$, so $\mathcal{K}$ and $\mathcal{K}'$ are isomorphic expansions by item (3) from Example \ref{DiagramsExa}
\end{proof}
\vspace{3 mm}

\begin{defin}
\label{BigRamseyExp}
We call any reasonable, precompact expansion $\mathcal{K}'$ of $\mathcal{K}$ with $D_{\mathcal{K}'}\cong D_\omega$ the \emph{big Ramsey expansion} of $\mathcal{K}$.
\end{defin}
\vspace{3 mm}

We end this subsection by showing that if $\mathbf{K}$ admits a big Ramsey structure $\mathbf{K}'$, the big Ramsey expansion $\mathcal{K}'$ of $\mathcal{K}$ is more-or-less the same thing as $\age{\mathbf{K}'}$. We need to briefly discuss what ``age'' means in the non-hereditary context. Suppose $L^*\supseteq L$ is a language and $\mathbf{K}^*$ is an $L^*$-structure with $\mathbf{K}^*|_L = \mathbf{K}$. Then the \emph{age of $\mathbf{K}^*$ over $\mathcal{K}$} is the class $\age{\mathbf{K}^*/\mathcal{K}} := \{\mathbf{A}^*: \exists n< \omega \exists s\in H_n(\mathbf{A}^*\cong \mathbf{K}^*\cdot s)\}$. This is a reasonable expansion of $\mathcal{K}$.
\vspace{3 mm}

\begin{theorem}
Suppose that $\mathbf{K}$ admits a big Ramsey structure $\mathbf{K}'$, and let $\mathcal{K}'$ be the big Ramsey expansion of $\mathcal{K}$. Then $\mathcal{K}'$ and $\age{\mathbf{K}'/\mathcal{K}}$ are isomorphic expansions.
\end{theorem}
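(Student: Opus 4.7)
The plan is to reduce the theorem to the uniqueness statement in Proposition \ref{DiagramExpansion}: two reasonable, precompact expansions of $\mathcal{K}$ are isomorphic iff they have isomorphic $\omega$-diagrams. Since by Definition \ref{BigRamseyExp} the big Ramsey expansion $\mathcal{K}'$ is characterized up to isomorphism by $D_{\mathcal{K}'}\cong D_\omega$, it suffices to show that $\age{\mathbf{K}'/\mathcal{K}}$ is a reasonable, precompact expansion whose diagram is isomorphic to $D_\omega$.

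First I would check the formal properties of $\age{\mathbf{K}'/\mathcal{K}}$ as an expansion. Precompactness is immediate: every $\mathbf{A}_n^\ast\in \age{\mathbf{K}'/\mathcal{K}}$ restricting to $\mathbf{A}_n$ has the form $\mathbf{K}'\cdot s$ for some $s\in H_n$, and so belongs to $\mathbf{K}'(\mathbf{A}_n)$, which is finite by Definition \ref{BRStructure}(2). (In fact $\age{\mathbf{K}'/\mathcal{K}}(\mathbf{A}_n)=\mathbf{K}'(\mathbf{A}_n)$, since any $\mathbf{A}_n'\in\mathbf{K}'(\mathbf{A}_n)$ admits an embedding $s\colon\mathbf{A}_n'\to\mathbf{K}'$ whose underlying map lies in $H_n$, and by the uniqueness clause in the definition of $\mathbf{B}\cdot f$ we have $\mathbf{K}'\cdot s=\mathbf{A}_n'$.) For reasonableness, given $f\in H_m^n$ and $\mathbf{A}_m'=\mathbf{K}'\cdot s$ with $s\in H_m$, Proposition \ref{Amalgamation}(1) supplies $t\in H_n$ with $t\circ f=s$; then $(\mathbf{K}'\cdot t)\cdot f=\mathbf{K}'\cdot(t\circ f)=\mathbf{A}_m'$.

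The heart of the proof is the diagram computation. Define $\gamma_n\colon H_n\to \mathbf{K}'(\mathbf{A}_n)$ by $\gamma_n(s)=\mathbf{K}'\cdot s$. By Definition \ref{BRStructure}(3)--(4), each $\gamma_n$ is an unavoidable $R_n$-coloring. The crucial associativity identity
\begin{align*}
(\mathbf{K}'\cdot s)\cdot f = \mathbf{K}'\cdot(s\circ f),
\end{align*}
which follows directly from the uniqueness defining $\mathbf{B}\cdot f$, does two things at once. It yields $\gamma_m(s\circ f)=\gamma_n(s)\cdot f$, which shows that for every $f\in H_m^n$ the value $\gamma_m\circ \hatf$ is determined by $\gamma_n$; hence $\gamma_m\ll \gamma_n$, so $\gamma:=\{\gamma_n:n<\omega\}$ has a well-defined $\omega$-diagram $D_\gamma$. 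At the same time, evaluating the definition of $D(\gamma_m,\gamma_n)$ gives $D(\gamma_m,\gamma_n)(\mathbf{A}_n',f)=\mathbf{A}_n'\cdot f$, which is exactly the diagram of the expansion $\age{\mathbf{K}'/\mathcal{K}}$.

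To conclude, Proposition \ref{EqualDiagrams} applied to each restriction $D_\gamma|_r$ together with Proposition \ref{OmegaDiagrams} shows $D_\gamma\cong D_\omega$, so $D_{\age{\mathbf{K}'/\mathcal{K}}}\cong D_\omega\cong D_{\mathcal{K}'}$, and the uniqueness half of Proposition \ref{DiagramExpansion} finishes the argument. The only real subtlety is being careful with the non-hereditary framework: one must distinguish the structure-valued map $s\mapsto \mathbf{K}'\cdot s$ from the abstract coloring $\gamma_n$, and check that the associativity identity for $\mathbf{B}\cdot f$ simultaneously gives unavoidability at every level, strong refinement across levels, and agreement of the two diagrams. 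Once that identity is in hand, everything else is bookkeeping against the machinery already set up in the section.
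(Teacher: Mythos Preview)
Your proposal is correct and follows essentially the same approach as the paper: both define the colorings $\gamma_n(s)=\mathbf{K}'\cdot s$, verify they are unavoidable $R_n$-colorings with $\gamma_m\ll\gamma_n$, and identify the resulting diagram simultaneously with $D_\omega$ and with $D_{\age{\mathbf{K}'/\mathcal{K}}}$ via the identity $(\mathbf{K}'\cdot s)\cdot f=\mathbf{K}'\cdot(s\circ f)$. You are somewhat more explicit than the paper in spelling out the reasonableness check and in naming Propositions \ref{EqualDiagrams} and \ref{OmegaDiagrams} for the passage from finite diagrams to $D_\omega$, but the argument is the same.
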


\begin{proof}
Write $\mathcal{K}^*$ for $\age{\mathbf{K}'/\mathcal{K}}$. First notice that $|\mathcal{K}^*(\mathbf{A}_n)| = R_n$ for every $n< \omega$, so $\mathcal{K}^*$ is precompact. As $\mathcal{K}^*$ is also reasonable, we can now form the diagram $D_{\mathcal{K}^*}$, and it suffices to show that $D_{\mathcal{K}'}\cong D_{\mathcal{K}^*}$. For each $n < \omega$, consider the coloring $\gamma_n$ of $H_n$ given by $\gamma_n(s) = \mathbf{K}'\cdot s$. Each $\gamma_n$ is an unavoidable $R_n$-coloring, and $\gamma_m\ll \gamma_n$ whenever $m\leq n < \omega$, so we can use these colorings to produce the diagram $D_\omega\cong D_{\mathcal{K}'}$. Using this representation of $D_\omega$, if $m\leq n< \omega$, $s\in H_n$, and $f\in H_m^n$, we can write $D_\omega(m,n)(\mathbf{K}'\cdot s, f) = \mathbf{K}'\cdot s\circ f$. But also $D_{\mathcal{K}^*}(m,n)(\mathbf{K}'\cdot s, f) = \mathbf{K}'\cdot s\circ f$. So $D_{\mathcal{K}'}\cong D_{\mathcal{K}^*}$.
\end{proof}
\vspace{3 mm}

\subsection{Roelcke precompact automorphism groups}
\label{RoelckeSection}
\vspace{3 mm}

\begin{defin}
A topological group $\Gamma$ is said to be \emph{Roelcke precompact} if for any open $U\subseteq G$ with $1_G\in U$, there is a finite $F\subseteq G$ with $UFU = \Gamma$. 
\end{defin}
\vspace{3 mm}

The class of Roelcke precompact automorphism groups is quite robust. For instance, if $\mathcal{L}$ is a \fr class (\emph{with} HP) in a finite relational language, then $\aut{\flim{\mathcal{L}}}$ is Roelcke precompact. This subsection first provides some background on what it means for $G = \aut{\mathbf{K}}$ to be Roelcke precompact, in particular, what it means for the class $\mathcal{K}$. We then show that if $G$ is Roelcke precompact, then the big Ramsey expansion class $\mathcal{K}'$ is in fact a Fra\"iss\'e--HP class.

For the rest of this section, fix $G = \aut{\mathbf{K}}$ for some \fr structure $\mathbf{K} = \flim{\mathcal{K}} = \bigcup_n \mathbf{A}_n$. Let $\widehat{G} = \emb{\mathbf{K}}$ be the left completion.
\vspace{3 mm}

\begin{lemma}
\label{Roelcke}
$G$ is Roelcke precompact iff for evey $m < \omega$, there is $n \geq m$ so that for any $f_0, f_1\in H_m$, there is $s\in H_n$ with $f_0 = s\circ i_m$ and $f_1\in \{s\circ f: f\in H_m^n\}$.
\end{lemma}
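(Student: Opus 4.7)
The plan is to reformulate Roelcke precompactness as a finiteness condition on orbits of the point stabilizers $U_m := \{g \in G : g \cdot i_m = i_m\}$ (equivalently, pointwise stabilizers of $A_m$) acting on $H_m$ by left composition. Since the $U_m$ form a neighborhood basis of $1_G$, $G$ is Roelcke precompact iff $U_m \backslash G / U_m$ is finite for every $m < \omega$. My first step would be to establish a bijection between $U_m \backslash G / U_m$ and $U_m \backslash H_m$ via $g \mapsto g \cdot i_m$. The forward compatibility uses that $v \cdot i_m = i_m$ when $v \in U_m$, so $g' = u g v$ yields $g' \cdot i_m = u \cdot (g \cdot i_m)$; conversely, if $g' \cdot i_m = u \cdot (g \cdot i_m)$ for some $u \in U_m$, then $v := g^{-1} u^{-1} g'$ fixes $i_m$ and hence lies in $U_m$, so $g' = u g v \in U_m g U_m$.

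For $(\Leftarrow)$, fix $m$ and let $n$ be a witness to the combinatorial condition. Given arbitrary $f_1 \in H_m$, apply the condition with $f_0 = i_m$: there exist $s \in H_n$ and $f \in H_m^n$ with $s \circ i_m = i_m$ and $s \circ f = f_1$. Since $s|_{A_m} = \mathrm{id}_{A_m}$, \fr homogeneity applied to the embedding $s : \mathbf{A}_n \to \mathbf{K}$ yields $u \in G$ with $u \cdot i_n = s$, and then $u \in U_m$ and $f_1 = u \circ (i_n \circ f)$. Thus every $U_m$-orbit on $H_m$ meets the finite set $\{i_n \circ f : f \in H_m^n\}$, so $U_m \backslash H_m$ is finite.

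For $(\Rightarrow)$, fix representatives $f^{(1)}, \ldots, f^{(k)} \in H_m$ for the (finitely many) $U_m$-orbits on $H_m$. Since $K = \bigcup_j A_j$, choose $n \geq m$ large enough that each $f^{(i)}(A_m) \subseteq A_n$, so each $f^{(i)}$ may be regarded as an element of $H_m^n$. For arbitrary $f_0, f_1 \in H_m$, use \fr homogeneity to pick $g \in G$ with $g \cdot i_m = f_0$, then write $g^{-1} \cdot f_1 = u \circ f^{(i)}$ for some $u \in U_m$ and index $i$. Set $s := g \circ u \circ i_n \in H_n$. Direct computation, using $u \circ i_m = i_m$ and $i_n \circ i_m = i_m$, gives $s \circ i_m = g \circ i_m = f_0$ and $s \circ f^{(i)} = g \circ u \circ f^{(i)} = g \cdot (g^{-1} \cdot f_1) = f_1$, so $f := f^{(i)} \in H_m^n$ works.

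The technical heart is the reduction to orbit counting in the first paragraph; once that bijection is in place, both implications reduce to straightforward manipulations of compositions together with \fr homogeneity and the finiteness of each $H_m^n$.
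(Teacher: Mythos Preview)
Your proof is correct and follows essentially the same approach as the paper. Both arguments hinge on the same ingredients: the subgroups $U_m$ form a neighborhood basis of $1_G$, representatives can be chosen to land in some $H_m^n$ for large enough $n$, and \fr homogeneity lets one extend embeddings in $H_n$ to automorphisms. Your explicit bijection $U_m\backslash G/U_m \cong U_m\backslash H_m$ is a clean piece of bookkeeping that the paper leaves implicit (the paper works directly with $UFU = G$ and representatives $g_i$ rather than with orbits on $H_m$), but once this is unpacked the two proofs match move-for-move.
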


\begin{proof}
Suppose $G$ is Roelcke precompact, and fix $m < \omega$. Then $U = \{g\in G: g|_m = i_m\}$ is an open neighborhood of the identity. Find $F:= \{g_0,...,g_{k-1}\}\subseteq G$ with $UFU = G$. Let $n\geq m$ be large enough so that for each $i< k$ we have $g_i|_m\in \emb{\mathbf{A}_m, \mathbf{A}_n}$.

Let $f_0, f_1\in H_m$. By ultrahomogeneity, we may assume that $f_0 = i_m$. Find $i < k$ and $u\in U$ with $ug_i\circ f_0 = f_1$. It follows that $(u\circ i_n)\circ i^n_m = i_m$ and $(u\circ i_n)\circ g_i|_m = f_1$ as desired.

Conversely, assume that $G$ has the property stated in the lemma. Let $U\subseteq G$ be an open neighborhood of $1_G$. We may assume $U = \{g\in G: g|_m = i_m\}$ for some $m< \omega$. Let $n\geq m$ witness the property from the lemma, and find $F:= \{g_0,...,g_{k-1}\}\subseteq G$ so that $\{g_i|_m: i< k\} = \emb{\mathbf{A}_m, \mathbf{A}_n}$. 

Let $g\in G$. Find $i< k$ and $s\in H_n$ so that $s\circ i_m = i_m$ and $s\circ g_i|_m = g_m$. Find $u\in G$ with $u|_n = s$, and notice that $u\in U$. Since $g_i^{-1}u^{-1}g\in U$, we have $g\in UFU$ as desired. 
\end{proof}
\vspace{3 mm}

Another useful way to think about Roelcke precompactness is via the notion of a ``type.'' Though the definition we present might look different, this is the same notion of type as from model theory.
\vspace{3 mm}

\begin{defin}
Let $m< \omega$, and let $(f_0,...,f_{k-1})$ and $(h_0,...,h_{k-1})$ be $k$-tuples from $H_m$. We say that $(f_0,...,f_{k-1})$ and $(h_0,...,h_{k-1})$ \emph{have the same type} if there is $g\in G$ with $g\circ f_i = h_i$ for each $i< k$. A \emph{$k$-type} on $H_m$ is any equivalence class of $k$-tuples. We write $\tp{f_0,...,f_{k-1}}$ for the type that $(f_0,...,f_{k-1})$ belongs to. Write $S_m^{(k)}$ for the collection of $k$-types over $H_m$.
\end{defin}
\vspace{3 mm}

\begin{rem}
For $(f_0,...,f_{k-1})$ and $(h_0,...,h_{k-1})$ to have the same type, it is sufficient to find $\eta\in \widehat{G}$ with $\eta\circ f_i = h_i$ for each $i< k$.
\end{rem}
\vspace{3 mm}

\begin{lemma}
\label{Types}
$G$ is Roelcke precompact iff for every $m< \omega$, there are only finitely many $2$-types on $H_m$. 
\end{lemma}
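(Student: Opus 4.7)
The plan is to use the characterization from Lemma \ref{Roelcke} in both directions, with ultrahomogeneity of $\mathbf{K}$ as the crucial translation between ``types'' and ``common witness $s$.''

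For the forward direction, assume $G$ is Roelcke precompact and fix $m < \omega$. Apply Lemma \ref{Roelcke} to obtain $n \geq m$ such that every pair $(f_0, f_1) \in H_m \times H_m$ is of the form $(s \circ i_m^n, s \circ f)$ for some $s \in H_n$ and $f \in H_m^n$. The key claim is that the type $\tp{f_0, f_1}$ depends only on $f$, not on $s$: if $(f_0, f_1) = (s \circ i_m^n, s \circ f)$ and $(h_0, h_1) = (s' \circ i_m^n, s' \circ f)$ for the same $f$, then by ultrahomogeneity applied to $s, s' \in H_n$, there is $g \in G$ with $g \circ s = s'$, and this $g$ witnesses $\tp{f_0, f_1} = \tp{h_0, h_1}$. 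Hence $|S_m^{(2)}| \leq |H_m^n| < \omega$.

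For the converse, assume that for each $m$ there are only finitely many $2$-types, and fix $m$. Enumerate the $2$-types as $\tau_1, \ldots, \tau_k$. For each $\tau_i$, fix a representative $(h_0^{(i)}, h_1^{(i)})$. Using that $G$ acts transitively on $H_m$ by post-composition (ultrahomogeneity), we may replace the representative by a type-equivalent one and assume $h_0^{(i)} = i_m$; then $h_1^{(i)} \in H_m$ has finite range, so for some $n_i$ we have $h_1^{(i)} \in H_m^{n_i}$. Taking $n := \max_i n_i$, each type $\tau_i$ has a representative of the form $(i_m, f_i)$ with $f_i \in H_m^n$.

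Now given arbitrary $f_0, f_1 \in H_m$, say $\tp{f_0, f_1} = \tau_i$, pick $g \in G$ with $g \circ i_m = f_0$ and $g \circ f_i = f_1$, and let $s := g|_n \in H_n$. Then $s \circ i_m^n = g \circ i_m = f_0$ and $s \circ f_i = g \circ f_i = f_1$, so $s$ satisfies the condition of Lemma \ref{Roelcke} and $G$ is Roelcke precompact. The only subtlety is making sure the same $n$ works for every type simultaneously, which is exactly why finiteness of $|S_m^{(2)}|$ is needed; everything else is bookkeeping with ultrahomogeneity, and there is no real obstacle.
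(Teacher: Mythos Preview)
Your proof is correct and follows essentially the same route as the paper's: both directions go through Lemma~\ref{Roelcke}, using ultrahomogeneity to normalize representatives so that every $2$-type has the form $\tp{i_m, f}$ with $f\in H_m^n$. Your forward direction spells out explicitly why the type depends only on $f$ (via a $g\in G$ sending $s$ to $s'$), whereas the paper simply asserts $\tp{f_0,f_1} = \tp{i_m,h}$ for some $h\in H_m^n$; the converse directions are virtually identical.
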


\begin{proof}
Assume that there are only finitely many $2$-types. Using ultrahomogeneity, we can find $h_0,...,h_{k-1}$ so that $\{\tp{i_m, h_i}: i< k\}$ is the set of $2$-types on $H_m$. Find $n\geq m$ large enough so that $h_i\in \emb{\mathbf{A}_m, \mathbf{A}_n}$ for each $i< k$. Let $f_0, f_1\in H_m$. Find $i< k$ so that $\tp{f_0, f_1} = \tp{i_m, h_i}$, and find $g\in G$ with $g\circ i_m = f_0$ and $g\circ h_i = f_1$. Then $g|_n\circ i_m = f_0$ and $g|_n\circ h_i = f_1$, so $G$ is Roelcke precompact by Lemma \ref{Roelcke}.

Conversely, if $G$ is Roelcke precompact, use Lemma \ref{Roelcke} to find $n\geq m$ as guaranteed by the lemma. Then if $f_0, f_1\in H_m$, we have $\tp{f_0, f_1} = \tp{i_m, h}$ for some $h\in \emb{\mathbf{A}_m, \mathbf{A}_n}$, so there are only finitely many $2$-types on $H_m$.
\end{proof}
\vspace{3 mm}

We now fix $\mathcal{K}'$ the big Ramsey expansion class of $\mathcal{K}$ with the goal of showing that $\mathcal{K}'$ is a Fra\"iss\'e--HP class whenever $G$ is Roelcke precompact. It remains to show that $\mathcal{K}'$ has the Joint Embedding Property (JEP) and the Amalgamation Property (AP). Both of these properties are defined in the introduction immediately before Definition \ref{BigRamsey}. It will be useful to rephrase both of these in terms of the diagram $D_{\mathcal{K}'} \cong D_\omega$. We start with the JEP. Let $D$ be an $\omega$-diagram based on $\{J_n: n <\omega\}$. We say that $D$ has the \emph{JEP for diagrams} if for any $m< \omega$ and $p,q\in J_m$, there are $n\geq m$, $q'\in J_n$, and $f\in H_m^n$ so that $D(m,n)(q', i_m) = p$ and $D(m,n)(q', f) = q$. It is routine to check that $D_{\mathcal{K}'}$ has the JEP for diagrams iff $\mathcal{K}'$ has the JEP.
\vspace{3 mm}

\begin{theorem}
\label{RamseyJEP}
Assume that $G$ is Roelcke precompact, and let $\mathcal{K}'$ be the big Ramsey expansion class of $\mathcal{K}$. Then $\mathcal{K}'$ has the JEP.
\end{theorem}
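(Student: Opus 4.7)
The plan is to translate JEP for $\mathcal{K}'$ into a statement purely about the diagram $D_\omega$, then verify that statement using Roelcke precompactness. First I would reduce to the same-level case: given $\mathbf{A}', \mathbf{B}' \in \mathcal{K}'$ with $\mathbf{A}'|_L \cong \mathbf{A}_{m_0}$ and $\mathbf{B}'|_L \cong \mathbf{A}_{m_1}$, WLOG $m_0 \leq m_1 =: m$. Since the inclusion $i^{m}_{m_0}: \mathbf{A}_{m_0} \to \mathbf{A}_{m}$ lies in $H_{m_0}^{m}$ and $\mathcal{K}'$ is reasonable, $\mathbf{A}'$ may be realized as $\mathbf{A}_m^{p} \cdot i^{m}_{m_0}$ for some expansion $\mathbf{A}_m^p \in \mathcal{K}'(\mathbf{A}_m)$. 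Hence any common expansion inside $\mathcal{K}'$ of $\mathbf{A}_m^p$ and $\mathbf{B}' = \mathbf{A}_m^q$ automatically embeds $\mathbf{A}'$ and $\mathbf{B}'$, so it suffices to prove: for every $m < \omega$ and all $p,q \in \mathcal{K}'(\mathbf{A}_m)$, there exist $n \geq m$, $q' \in \mathcal{K}'(\mathbf{A}_n)$, and $f \in H_m^n$ with $\mathbf{A}_n^{q'} \cdot i_m = \mathbf{A}_m^p$ and $\mathbf{A}_n^{q'} \cdot f = \mathbf{A}_m^q$.

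Next I would realize $D_{\mathcal{K}'} \cong D_\omega$ via a compatible sequence of unavoidable $R_k$-colorings $\gamma_k$ on $H_k$ with $\gamma_m \ll \gamma_n$ for $m \leq n$, as in Example \ref{DiagramsExa} and Proposition \ref{EqualDiagrams}. Under the induced bijection $\mathcal{K}'(\mathbf{A}_k) \leftrightarrow \im{\gamma_k}$, the goal becomes: given $p, q \in \im{\gamma_m}$, find $n \geq m$, $q' \in \im{\gamma_n}$, and $f \in H_m^n$ such that every $s \in \gamma_n^{-1}(\{q'\})$ satisfies $\gamma_m(s \circ i_m) = p$ and $\gamma_m(s \circ f) = q$. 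Because $\gamma_m \ll \gamma_n$, these two values depend only on the $\gamma_n$-color of $s$, so it suffices to exhibit a single $s$ witnessing both.

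Finally, I would execute using Lemma \ref{Roelcke}. Since $\gamma_m$ is unavoidable, both preimages $\gamma_m^{-1}(\{p\})$ and $\gamma_m^{-1}(\{q\})$ are nonempty; pick $f_0 \in \gamma_m^{-1}(\{p\})$ and $f_1 \in \gamma_m^{-1}(\{q\})$. By Lemma \ref{Roelcke}, for $n$ large enough there exist $s \in H_n$ and $f \in H_m^n$ with $s \circ i_m = f_0$ and $s \circ f = f_1$. Setting $q' := \gamma_n(s)$, the relation $\gamma_m \ll \gamma_n$ yields $\gamma_m(s' \circ i_m) = \gamma_m(f_0) = p$ and $\gamma_m(s' \circ f) = \gamma_m(f_1) = q$ for every $s' \in \gamma_n^{-1}(\{q'\})$. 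Translating back through $D_\omega \cong D_{\mathcal{K}'}$ produces the desired $\mathbf{A}_n^{q'}$.

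The conceptual obstacle is mild but instructive: unavoidability of a single coloring only controls one color class at a time, whereas JEP demands that two prescribed colors $p$ and $q$ be exhibited simultaneously through a single higher-level expansion. Roelcke precompactness supplies the missing ingredient, packaging an arbitrary pair $(f_0, f_1)$ of embeddings into one $s \in H_n$; without it there is no a priori reason that a single $q' \in \im{\gamma_n}$ should witness both $p$ and $q$ at the lower level.
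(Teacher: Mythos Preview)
Your argument is essentially the paper's, but there is one genuine gap in the way you have written it. You claim to ``realize $D_{\mathcal{K}'}\cong D_\omega$ via a compatible sequence of unavoidable $R_k$-colorings $\gamma_k$'' for all $k<\omega$. No such infinite sequence is available in this section: the existence of an $\omega$-long chain $\gamma_0\ll\gamma_1\ll\cdots$ of unavoidable $R_k$-colorings is exactly the hypothesis of Theorem~\ref{LevelsBR}, i.e.\ it is equivalent to $\mathbf{K}$ admitting a big Ramsey structure, which is \emph{not} being assumed here. Proposition~\ref{EqualDiagrams} only supplies, for each finite $r$, a chain $\gamma_0\ll\cdots\ll\gamma_{r-1}$ realizing $D_\omega|_r$.

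The fix is immediate and recovers precisely the paper's proof: since Lemma~\ref{Roelcke} produces a single $n\geq m$ that works uniformly for all pairs in $H_m$, choose $n$ \emph{first} (depending only on $m$), and only then invoke Proposition~\ref{EqualDiagrams} to obtain a finite chain $\gamma_0\ll\cdots\ll\gamma_n$ with $D_\gamma = D_\omega|_{n+1}$. Now pick $f_0\in\gamma_m^{-1}(\{p\})$, $f_1\in\gamma_m^{-1}(\{q\})$, apply the Roelcke conclusion to get $s\in H_n$ and $f\in H_m^n$ with $s\circ i_m=f_0$ and $s\circ f=f_1$, and set $q'=\gamma_n(s)$. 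With this reordering your argument is correct and identical to the paper's.
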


\begin{proof}
Fix a representation of $D_\omega$ based on $\{R_n: n< \omega\}$. Fix $m< \omega$, and let $p, q\in R_m$. Using Roelcke precompactness, find $n\geq m$ large enough so that $\{\mathrm{tp}(i_m, f): f\in H_m^n\} = S_m^{(2)}$. Fix unavoidable colorings $\gamma:= \{\gamma_0\ll \cdots \ll \gamma_n\}$ so that $D_\gamma = D_\omega|_{n+1}$. For each $f\in H_m^n$, set $T_f = \{s\in H_n: \gamma_m(s\circ i_m) = p \text{ and }\gamma_m(s\circ f) = q\}$. If for some $f\in H_m^n$ we have $T_f\neq \emptyset$, then we will be done by picking $s\in T_f$ and setting $q' = \gamma_n(s)$. To see that some $T_f$ is non-empty, pick $h_p, h_q\in H_m$ with $\gamma_m(h_p) = p$ and $\gamma_m(h_q)$. By choice of $n$, we can find $s\in H_n$ and $f\in H_m^n$ with $s\circ i_m = h_p$ and $s\circ f = h_q$. Then $s\in T_f$ as desired.
\end{proof}
\vspace{3 mm}

We now turn towards the AP. If $D$ is an $\omega$-diagram based on $\{J_n: n< \omega\}$, then we say that $D$ has the \emph{AP for diagrams} if for any $m\leq n < \omega$, any $p, q\in J_n$, and any $f_p, f_q\in H_m^n$ with $D(m,n)(p, f_p) = D(m,n)(q, f_q) = v$ for some $v\in J_m$, then there are $N\geq n$, $q'\in J_N$, and $s_p, s_q\in H_n^N$ with $D(n,N)(q', s_p) = p$ and $D(n,N)(q', s_q) = q$. Once again, it is routine to check that $D_{\mathcal{K}'}$ has the AP for diagrams iff $\mathcal{K}'$ has the AP.

Our strategy for proving that $\mathcal{K}'$ has the AP is adapted from the proof of a theorem of Ne\v{s}et\v{r}il and R\"odl \cite{NeRo}. If $\mathcal{L}$ is a class of finite structures, we say that $\mathcal{L}$ has the Ramsey Property (RP) if for any $\mathbf{A}\leq \mathbf{B}\in \mathcal{L}$, there is $\mathbf{C}\in \mathcal{L}$ with $\mathbf{B}\leq \mathbf{C}$ so that $\mathbf{C}\rightarrow (\mathbf{B})^\mathbf{A}_{2}$. The theorem of Ne\v{s}et\v{r}il and R\"odl states that if $\mathcal{L}$ is a class of finite structures with both the JEP and the RP, then $\mathcal{L}$ also has the AP. While we are unable to prove that $\mathcal{K}'$ has the RP, we will use ideas from Ramsey theory to power the proof. Namely, if $\gamma_m$ is an unavoidable $R_m$-coloring of $H_m$ and we write $\gamma_m^{-1}(\{i\}) :=T = S_0\sqcup\cdots \sqcup S_{r-1}$, there is $\eta\in \widehat{G}$ and $i< r$ with $\eta^{-1}(S_i) = \eta^{-1}(T)$.
\vspace{3 mm}

\begin{theorem}
\label{RamseyAP}
Assume that $G$ is Roelcke precompact, and let $\mathcal{K}'$ be the big Ramsey expansion of $\mathcal{K}$. Then $\mathcal{K}'$ has the AP.
\end{theorem}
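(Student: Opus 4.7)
The approach adapts the classical Ne\v{s}et\v{r}il--R\"odl argument that JEP together with the Ramsey Property implies AP. In our setting, JEP is provided by Theorem \ref{RamseyJEP}, while the role of the Ramsey property is played by unavoidability of color classes of $\gamma_n$. Fix unavoidable colorings $\gamma_0 \ll \gamma_1 \ll \cdots$ realizing $D_\omega$, and set $T_p := \gamma_n^{-1}(\{p\})$ and $T_q := \gamma_n^{-1}(\{q\})$, both unavoidable in $H_n$. Translating via the correspondence between $\mathcal{K}'$ and $D_\omega$, AP asks: given $p, q \in R_n$ and $f_p, f_q \in H_m^n$ with $D(m,n)(p, f_p) = D(m,n)(q, f_q) = v$, produce $N \geq n$, $s_p, s_q \in H_n^N$ with $s_p \circ f_p = s_q \circ f_q$, and $q' \in R_N$ with $D(n,N)(q', s_p) = p$ and $D(n,N)(q', s_q) = q$.

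First I would invoke the ordinary Fra\"iss\'e AP for $\mathcal{K}$ (which holds since $\mathcal{K}$ is a Fra\"iss\'e--HP class) to find $N_0 \geq n$ and $u_p, u_q \in H_n^{N_0}$ with $u_p \circ f_p = u_q \circ f_q$; this handles the reduct amalgamation but says nothing about expansions. The problem reduces to producing a pair $(h_p, h_q) \in T_p \times T_q$ realizing the 2-type $\tp{u_p, u_q}$ in $H_n$, because ultrahomogeneity then yields $g \in G$ with $g \circ u_p = h_p$ and $g \circ u_q = h_q$. Choosing $N$ large enough that $g(A_{N_0}) \subseteq A_N$, the restriction of $g$ gives an element of $H_{N_0}^N$, so $s_p := g \circ u_p$ and $s_q := g \circ u_q$ lie in $H_n^N$, and $s_p \circ f_p = s_q \circ f_q$ follows from the amalgam property. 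Setting $q' := \gamma_N(i_N)$, we get $D(n,N)(q', s_p) = \gamma_n(i_N \circ s_p) = \gamma_n(h_p) = p$, and similarly for $q$.

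The heart of the proof is producing this compatible pair, and here Roelcke precompactness enters essentially. By Lemma \ref{Types}, after enlarging $N_0$ if needed, every 2-type on $H_n$ is realized by a pair $(i_n, f)$ with $f \in H_n^{N_0}$. Pick $f_0 \in H_n^{N_0}$ with $\tp{i_n, f_0} = \tp{u_p, u_q}$; the 2-type condition together with $u_p \circ f_p = u_q \circ f_q$ forces $f_p = f_0 \circ f_q$ (modulo the inclusion into $\mathbf{A}_{N_0}$). Existence of $(h_p, h_q)$ is then equivalent to nonemptiness of
\[ T_{f_0} := \{s \in H_{N_0} : s \circ i_n \in T_p \text{ and } s \circ f_0 \in T_q\}. \]
Both one-sided sets $\{s : s \circ i_n \in T_p\}$ and $\{s : s \circ f_0 \in T_q\}$ are unavoidable in $H_{N_0}$ by Lemma \ref{ComboLevels}, and $T_{f_0}$ is a union of $\gamma_{N_0}$-color classes since $\gamma_n \ll \gamma_{N_0}$. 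I would prove $T_{f_0} \neq \emptyset$ by a case analysis on which $\gamma_{N_0}$-classes can be avoidable: the maximality of $R_{N_0}$ among sizes of unavoidable colorings bounds the number of classes that can be simultaneously unavoidable in any refinement, and combined with the unavoidability of the two one-sided sets and the diagrammatic compatibility of $(p, q)$ over $v$, the only way for $T_{f_0}$ to be a union of solely avoidable classes should contradict either the maximality of $R_{N_0}$ or the Roelcke-guaranteed realization of the 2-type $\tp{u_p, u_q}$.

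The main obstacle is precisely this nonemptiness step. Intersections of unavoidable sets can be empty in general --- for instance, complementary dense subsets of $\mathbb{Q}$ picked out by a wild enumeration are each unavoidable for the single-point class but partition $\mathbb{Q}$ --- so the argument must carefully exploit both the diagrammatic compatibility of $(p, q)$ over $v$ and the structural constraints coming from Roelcke precompactness and the maximality of the big Ramsey degrees. This is parallel in spirit to, but technically subtler than, the Roelcke-based argument used in the proof of Theorem \ref{RamseyJEP}.
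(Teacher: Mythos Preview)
You have correctly identified the reduction to a combinatorial nonemptiness statement, and you are honest that this is where the real work lies. However, the argument you sketch for that step is not a proof: the ``maximality of $R_{N_0}$'' heuristic does not by itself rule out that $T_{f_0}$ is empty, and you yourself give the example showing intersections of unavoidable sets can be empty. There is also a structural issue with fixing the $2$-type in advance via the Fra\"iss\'e AP for $\mathcal{K}$: there may be several non-isomorphic amalgams of $\mathbf{A}_n \leftarrow \mathbf{A}_m \rightarrow \mathbf{A}_n$ in $\mathcal{K}$, and there is no reason the particular $\tp{u_p, u_q}$ you chose supports a colored amalgam, even if some other $2$-type does. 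Your argument would need to run over all finitely many $2$-types simultaneously, and even then you have not shown any of the corresponding sets is nonempty.

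The paper's argument avoids this by working one level \emph{down}, at $H_m$ rather than $H_{N_0}$. Partition the unavoidable color class $T := \gamma_m^{-1}(\{v\})$ into four pieces $S_I$, $I \subseteq \{p,q\}$, where $h \in S_I$ iff for each $x \in \{p,q\}$ one has $x \in I \Leftrightarrow \exists s \in H_n\,(s \circ f_x = h \text{ and } \gamma_n(s) = x)$. Since $T$ is unavoidable, there is $\eta \in \widehat{G}$ and a single $I$ with $\eta^{-1}(S_I) = \eta^{-1}(T)$. Now unavoidability of $\gamma_n^{-1}(\{p\})$ forces $p \in I$: pick $s$ with $\gamma_n(\eta \circ s) = p$; then $\eta \circ (s \circ f_p) \in T$ by the diagram relation, and it visibly lies in some $S_{I'}$ with $p \in I'$, so $I' = I$. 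Symmetrically $q \in I$, hence $S_{\{p,q\}} \neq \emptyset$. Any $h \in S_{\{p,q\}}$ comes with $t_p, t_q \in H_n$ satisfying $t_p \circ f_p = t_q \circ f_q = h$, $\gamma_n(t_p) = p$, $\gamma_n(t_q) = q$; Roelcke precompactness is then used only at the end, to realize the resulting $2$-type $\tp{t_p, t_q}$ inside some $H_n^N$. The key move you are missing is this partition-and-concentrate step at level $m$, which replaces the intersection problem by a Ramsey-type argument on a single unavoidable class.
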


\begin{proof}
Fix a representation of $D_\omega$ based on $\{R_n: n< \omega\}$. Let $m\leq n< \omega$, $p, q\in R_n$, and $f_p, f_q\in H_m^n$ with $D_\omega(m,n)(p, f_p) = D_\omega(m,n)(q, f_q) = v$ for some $v\in R_m$. Using Roelcke precompactness, find $N\geq n$ large enough so that $\{\mathrm{tp}(i_n, s): s\in H_n^N\} = S_n^{(2)}$. Fix unavoidable colorings $\gamma:= \{\gamma_0\ll \cdots \ll \gamma_N\}$ so that $D_\gamma = D_\omega|_{N+1}$. Now consider the following partition of $T:= \gamma_m^{-1}(\{v\})$ into $4$ pieces $S_\emptyset$, $S_{\{p\}}$, $S_{\{q\}}$, and $S_{\{p,q\}}$. If $h\in T$ and $I\subseteq \{p, q\}$, we put $h\in S_I$ iff for $x\in \{p,q\}$, we have $x\in I$ iff there is $s\in H_n$ with $s\circ f_x = h$ and $\gamma_n(s) = x$. Fix some $I\subseteq \{p,q\}$ and $\eta\in \widehat{G}$ with $\eta^{-1}(S_I) = \eta^{-1}(T)$. But notice that $I$ must equal $\{p,q\}$ since both of the sets $\gamma_n^{-1}(\{p\})$ and $\gamma_n^{-1}(\{q\})$ are unavoidable. 

So fix $h\in T$ and $t_p, t_q\in H_n$ with $t_p\circ f_p = t_q\circ f_q = h$, $\gamma_n(t_p) = p$, and $\gamma_n(t_q) = q$. By choice of $N$, find $y\in H_N$ and $s_p, s_q\in H_n^N$ so that $y\circ s_p = t_p$ and $y\circ s_q = t_q$. Now setting $q' = \gamma_N(y)$, we are done.
\end{proof}
\vspace{3 mm}

While we are unable to prove in general that $\mathcal{K}'$ has the RP, we can show this holds if $\mathbf{K}$ admits a big Ramsey structure $\mathbf{K}'$. As a consequence, if there is a big Ramsey structure, then $\mathcal{K}'$ has the AP regardless of whether or not $G$ is Roelcke precompact by using the Ne\v{s}et\v{r}il--R\"odl theorem. Recall that with a big Ramsey structure, $\mathcal{K}'\cong \age{\mathbf{K}'/\mathcal{K}}$, and $\age{\mathbf{K}'/\mathcal{K}}$ trivially has the JEP.
\vspace{3 mm}

\begin{theorem}
\label{RPBR}
Suppose $\mathbf{K}$ admits a big Ramsey structure $\mathbf{K}'$. Then the big Ramsey expansion $\mathcal{K}' \cong \age{\mathbf{K}'/\mathcal{K}}$ has the RP. 
\end{theorem}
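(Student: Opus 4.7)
First I would reduce to an ``infinite'' version: it suffices to show that for every $\mathbf{A}'\leq \mathbf{B}'$ in $\mathcal{K}'$ and every $2$-coloring $\chi:\emb{\mathbf{A}',\mathbf{K}'}\to 2$, there is an $L'$-embedding $t:\mathbf{B}'\to \mathbf{K}'$ with $\{t\circ f:f\in \emb{\mathbf{A}',\mathbf{B}'}\}$ monochromatic. The finite RP for $\mathcal{K}'$ then follows by a standard compactness argument: choose an exhaustion $\mathbf{C}_k'\subseteq \mathbf{K}'$ given by $\mathbf{K}'$ restricted to $A_k$, take a limit point of any sequence of bad colorings, and contradict the infinite version.

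For the infinite version, using $\mathcal{K}'\cong \age{\mathbf{K}'/\mathcal{K}}$ I may assume $\mathbf{A}'=\mathbf{A}_m'\in \mathbf{K}'(\mathbf{A}_m)$ and $\mathbf{B}'=\mathbf{A}_n'\in \mathbf{K}'(\mathbf{A}_n)$ for some $m\leq n$, where $\emb{\mathbf{A}',\mathbf{K}'}=\gamma_m^{-1}(\{\mathbf{A}_m'\})$ for the unavoidable $R_m$-coloring $\gamma_m:H_m\to \mathbf{K}'(\mathbf{A}_m)$ given by $\gamma_m(f)=\mathbf{K}'\cdot f$. I extend $\chi$ to all of $H_m$ by any values (say $0$) outside $\emb{\mathbf{A}',\mathbf{K}'}$, call this extension $\hat\chi$, and form the product coloring $\tilde\chi:H_m\to \mathbf{K}'(\mathbf{A}_m)\times 2$ with $\tilde\chi(f)=(\gamma_m(f),\hat\chi(f))$. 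Since $\tilde\chi$ takes at most $2R_m$ values and refines $\gamma_m$, by Lemma \ref{MakeUnavoidable} I get $\eta\in \widehat{G}$ making $\tilde\chi\cdot\eta$ unavoidable; by the big Ramsey degree it takes at most $R_m$ values, while $\gamma_m\cdot\eta\leq \tilde\chi\cdot\eta$ is an unavoidable $R_m$-coloring by Lemma \ref{StayUnavoidable}, forcing $\tilde\chi\cdot\eta\sim \gamma_m\cdot\eta$. This yields a value $i\in\{0,1\}$ such that $\chi(\eta\cdot f)=i$ for every $f\in H_m$ with $\mathbf{K}'\cdot(\eta\cdot f)=\mathbf{A}_m'$.

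To finish, I would use Lemma \ref{StayUnavoidable} again at level $n$: $\gamma_n\cdot\eta$ is also an unavoidable $R_n$-coloring, so its color class $(\gamma_n\cdot\eta)^{-1}(\{\mathbf{A}_n'\})$ is non-empty. Pick any $s$ in this class and set $t:=\eta\cdot s\in H_n$. Then $\mathbf{K}'\cdot t=\mathbf{A}_n'$, so $t\in \emb{\mathbf{B}',\mathbf{K}'}$. For each $f\in \emb{\mathbf{A}',\mathbf{B}'}=\{f\in H_m^n:\mathbf{A}_n'\cdot f=\mathbf{A}_m'\}$, the composition $t\circ f=\eta\cdot(s\circ f)$ satisfies $\mathbf{K}'\cdot(t\circ f)=(\mathbf{K}'\cdot t)\cdot f=\mathbf{A}_n'\cdot f=\mathbf{A}_m'$, so $s\circ f$ is one of the $H_m$-elements for which $\chi(\eta\cdot(s\circ f))=i$. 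Hence $\chi(t\circ f)=i$ for all $f\in\emb{\mathbf{A}',\mathbf{B}'}$, as required.

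The main obstacle is really a conceptual one rather than a technical one: one must see that the ``extra freedom'' in the two-coloring $\chi$ can be absorbed as an extension of a canonical $R_m$-coloring and therefore be forced constant by the maximality clause in the definition of big Ramsey degree. Once this is recognized, everything else is bookkeeping about how the $L'$-expansion of a composite $\eta\cdot s\circ f$ factors through the structure $\mathbf{A}_n'$.
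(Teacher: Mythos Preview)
Your proposal is correct and follows essentially the same approach as the paper. Both proofs reduce to the infinite statement $\mathbf{K}'\rightarrow (\mathbf{A}_n')^{\mathbf{A}_m'}_2$, then use the maximality of the big Ramsey degree $R_m$ to absorb the $2$-coloring $\chi$ into the canonical coloring $\gamma_m$: the paper compresses your product-coloring argument into the single line ``find $\eta\in\widehat{G}$ and $i<2$ with $\eta^{-1}(T)=\eta^{-1}(S_i)$,'' and then both finish by producing $s\in H_n$ with $\mathbf{K}'\cdot(\eta\cdot s)=\mathbf{A}_n'$ via item (4) of Definition~\ref{BRStructure} (equivalently, your appeal to Lemma~\ref{StayUnavoidable} at level $n$).
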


\begin{proof}
Fix $m \leq n <\omega$ and expansions $\mathbf{A}_n^* := \mathbf{K}'\cdot s$ and $\mathbf{A}_m^* := \mathbf{K}'\cdot (s\circ f)$ for some $s\in H_n$ and $f\in H_m^n$. By using some $g\in G$ and working with the big Ramsey structure $\mathbf{K}'\cdot g$ instead, we may assume that $s\circ f = i_m$; by passing to a possibly larger $n$, we may assume $f = i_m$ and $s = i_n$. It is enough (see section 4 of \cite{Z}) to show that $\mathbf{K}'\rightarrow (\mathbf{A}_n^*)^{\mathbf{A}_m^*}_2$, so fix a coloring $\gamma: \emb{\mathbf{A}_m^*, \mathbf{K}'}\rightarrow 2$. Notice that $\emb{\mathbf{A}_m^*, \mathbf{K}'} = \{h\in H_m: \mathbf{K}'\cdot h = \mathbf{A}_m^*\}$, and write $\emb{\mathbf{A}_m^*, \mathbf{K}'} := T = S_0\sqcup S_1$ for the two color classes. Find $\eta\in \widehat{G}$ and $i< 2$ with $\eta^{-1}(T) = \eta^{-1}(S_i)$. Find $s\in H_n$ with $\mathbf{K}'\cdot (\eta\cdot s) = \mathbf{A}_n^*$. Then $\eta\cdot s\in \emb{\mathbf{A}_n^*, \mathbf{K}'}$ satisfies that $|\{\gamma((\eta\cdot s)\circ f): f\in \emb{\mathbf{A}_m^*, \mathbf{A}_n^*}\}| = 1$ as desired.
\end{proof}
\vspace{3 mm}

\begin{cor}
\label{APBR}
In the setting of Theorem \ref{RPBR}, $\mathcal{K}'$ has the AP.
\end{cor}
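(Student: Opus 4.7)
The plan is to deduce the Amalgamation Property by invoking the Nešetřil--Rödl theorem cited in the paper: for any class of finite structures, the conjunction of the Joint Embedding Property and the Ramsey Property implies the Amalgamation Property. Since Theorem \ref{RPBR} supplies the RP for $\mathcal{K}' \cong \age{\mathbf{K}'/\mathcal{K}}$, all that remains is to verify the JEP and then apply the Nešetřil--Rödl result. The author has flagged the JEP as essentially trivial, so the real content is already packaged in Theorem \ref{RPBR}; the corollary is just the correct bookkeeping.

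First I would verify the JEP for $\age{\mathbf{K}'/\mathcal{K}}$ explicitly, even though it is routine. Given two expansions $\mathbf{A}^* = \mathbf{K}'\cdot s$ and $\mathbf{B}^* = \mathbf{K}'\cdot t$ with $s \in H_m$ and $t \in H_n$, choose $N$ large enough so that $\im{s} \cup \im{t} \subseteq A_N$, and let $u = i_N \in H_N$ be the inclusion of $\mathbf{A}_N$ into $\mathbf{K}$. Set $\mathbf{C}^* := \mathbf{K}'\cdot u \in \age{\mathbf{K}'/\mathcal{K}}$. By construction $s$ and $t$ corestrict to embeddings $s', t': \mathbf{A}_m, \mathbf{A}_n \to \mathbf{A}_N$, and then $\mathbf{C}^*\cdot s' = \mathbf{K}'\cdot (u\circ s') = \mathbf{K}'\cdot s = \mathbf{A}^*$, and similarly $\mathbf{C}^*\cdot t' = \mathbf{B}^*$. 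Thus $s'$ and $t'$ are embeddings of $\mathbf{A}^*$ and $\mathbf{B}^*$ into $\mathbf{C}^*$ in the sense appropriate to $\mathcal{K}'$, so the JEP holds.

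Next I would combine this with Theorem \ref{RPBR}, which gives the RP, and invoke the Nešetřil--Rödl theorem \cite{NeRo} to conclude that $\mathcal{K}'$ has the AP. One small subtlety to check is that the Nešetřil--Rödl argument applies in the Fraïssé--HP setting (without assuming HP), but inspection of the usual proof shows that HP is not used in the JEP + RP $\Rightarrow$ AP implication.

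The only place where one might worry is that the amalgam produced by Nešetřil--Rödl lives in the closure of $\mathcal{K}'$ under isomorphism and under substructures of Ramsey witnesses, and one should confirm this object actually belongs to $\mathcal{K}'$ as defined. Since $\mathcal{K}' \cong \age{\mathbf{K}'/\mathcal{K}}$ consists precisely of the substructures of $\mathbf{K}'$ whose $L$-reduct lies in $\mathcal{K}$, any Ramsey witness $\mathbf{D}^* \leq \mathbf{K}'$ furnished by Theorem \ref{RPBR} lies in $\age{\mathbf{K}'/\mathcal{K}}$ automatically, so closure is not an issue. This is the only potential obstacle and it dissolves immediately, making the corollary a quick consequence of the two ingredients.
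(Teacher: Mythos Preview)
Your proposal is correct and follows exactly the approach the paper intends: the paragraph preceding Theorem~\ref{RPBR} already notes that $\age{\mathbf{K}'/\mathcal{K}}$ trivially has the JEP and that combining this with the RP from Theorem~\ref{RPBR} yields the AP via the Ne\v{s}et\v{r}il--R\"odl theorem, so the corollary is stated without further proof. Your explicit verification of the JEP and the remark that HP is not used in the Ne\v{s}et\v{r}il--R\"odl argument are welcome additions but not new ideas.
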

\vspace{3 mm}

If $\mathcal{K}^*$ is a reasonable expansion of $\mathcal{K}$ in a language $L^*$, we can form the space $X_{\mathcal{K}^*}$ of $L^*$-structures $\mathbf{K}^*$ with underlying set $K$ such that $\age{\mathbf{K}^*/\mathcal{K}}\subseteq \mathcal{K}^*$; we endow $X_{\mathcal{K}^*}$ with the logic topology. If $\mathcal{K}^*$ is also precompact, then $X_{\mathcal{K}^*}$ is compact, hence a $G$-flow. If $\mathcal{K}'$ is the big Ramsey expansion of $\mathcal{K}$ and $x\in X_{\mathcal{K}'}$ is any completion point, then $x$ is a big Ramsey structure, and the ambit $(X_{\mathcal{K}'}, x)$ is a big Ramsey ambit. 

More generally, if $\mathcal{K}^*$ is reasonable, precompact, and has the JEP, then any $x\in X_{\mathcal{K}^*}$ with $\age{x/\mathcal{K}} = \mathcal{K}^*$ has dense orbit, so $X_{\mathcal{K}^*}$ is a pre-ambit. Furthermore, assume that $\mathcal{K}^*$ also has the AP, and let $\mathbf{F}^*\in X_{\mathcal{K}^*}$ be a \fr limit. Another useful property of structures equivalent to being a \fr structure is the \emph{Extension Property}. In our non-hereditary context, this reads as follows. 
\vspace{3 mm}

\begin{defin}
A structure $\mathbf{F}^*\in X_{\mathcal{K}^*}$ has the \emph{Extension Property} (EP) if for any $\mathbf{A}\subseteq \mathbf{B}$ with $\mathbf{A}, \mathbf{B}\in \mathcal{K}$, expansions $\mathbf{A}^*\subseteq \mathbf{B}^*$, and any embedding $f: \mathbf{A}^*\rightarrow \mathbf{F}^*$, there is an embedding $h: \mathbf{B}^*\rightarrow \mathbf{F}^*$ with $h|_A = f$.
\end{defin}
\vspace{3 mm}

This formulation has two important corollaries which we now describe. First note that orbits of $X_{\mathcal{K}^*}$ correspond exactly to isomorphism classes of structures, an any isomorphism between structures in $X_{\mathcal{K}^*}$ must also be an automorphism of $\mathbf{K}$. Now a structure $\mathbf{F}^*\in X_{\mathcal{K}^*}$ is isomorphic to the \fr limit iff $\mathbf{F}^*$ satisfies the EP, and this can easily be phrased as a countable intersection of open conditions. Therefore the orbit of $\mathbf{F}^*$ is $G_\delta$, and since $\age{\mathbf{F}^*/\mathbf{K}} = \mathcal{K}^*$, the orbit is also dense, therefore comeager. 

The second consequence of the EP is that starting from the \fr limit $\mathbf{F}^*$, it is ``easy'' to obtain any other structure in $X_{\mathcal{K}^*}$. Namely, if $\mathbf{K}^*\in X_{\mathcal{K}^*}$ is any structure, we can repeatedly use the extension property to find $\eta\in \widehat{G}$ with $\mathbf{F}^*\cdot \eta = \mathbf{K}^*$.

These two results conspire to make the search for completion points of $X_{\mathcal{K}'}$ difficult. Corollary \ref{APBR} tells us that when $G$ is Roelcke precompact, $\mathcal{K}'$ admits a \fr limit $\mathbf{F}'\in X_{\mathcal{K}^*}$, and the orbit of $\mathbf{F}'$ in $X_{\mathcal{K}'}$ is comeager. The following proposition shows that in most circumstances, the generic orbit is the wrong orbit to investigate. Recall from the introduction the definition of \emph{small Ramsey degree}. It is easy to see that since each $\mathbf{A}_m$ has finite big Ramsey degree $R_m$, then each $\mathbf{A}_m$ also has finite small Ramsey degree $r_m\leq R_m$. The key consequence of having small Ramsey degree $r_m$ that we will need is as follows. If $\mathcal{K}^*$ is any reasonable, precompact expansion of $\mathcal{K}$, then there is $\mathbf{K}^*\in X_{\mathcal{K}^*}$ with $|\{\mathbf{K}^*\cdot f: f\in H_m\}| \leq r_m$.
\vspace{3 mm}

\begin{prop}
\label{GenericNotBR}
For each $m< \omega$, let $r_m$ be the small Ramsey degree of $\mathbf{A}_m$. Suppose for some $m< \omega$ that $r_m< R_m$. Then if $G$ is Roelcke precompact and $\mathbf{F}' = \flim{\mathcal{K}'}\in X_{\mathcal{K}'}$, then $\mathbf{F}'$ is not a big Ramsey structure.
\end{prop}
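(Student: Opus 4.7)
The plan is to argue by contradiction, using the Extension Property of $\mathbf{F}'$ together with Lemma \ref{StayUnavoidable}. Suppose $\mathbf{F}'$ were a big Ramsey structure. Then by Definition \ref{BRStructure}(4), the coloring $\gamma_m : H_m \to \mathcal{K}'(\mathbf{A}_m)$ defined by $\gamma_m(f) = \mathbf{F}' \cdot f$ would witness the big Ramsey degree of $\mathbf{A}_m$, and in particular $\gamma_m$ would be an unavoidable $R_m$-coloring.

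First I would invoke the Roelcke precompactness assumption: by Theorems \ref{RamseyJEP} and \ref{RamseyAP} together with Theorem \ref{LevelsBR}'s convention, $\mathcal{K}'$ is a Fraïssé--HP class, so its limit $\mathbf{F}'$ exists in $X_{\mathcal{K}'}$ and satisfies the Extension Property. This is precisely what is needed to ensure the key reachability fact stated just before Proposition \ref{GenericNotBR}: for every $\mathbf{K}^* \in X_{\mathcal{K}'}$ there is $\eta \in \widehat{G}$ with $\mathbf{F}' \cdot \eta = \mathbf{K}^*$.

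Next I would use the small-Ramsey-degree hypothesis $r_m < R_m$, which (as noted in the paragraph preceding the proposition) guarantees a structure $\mathbf{K}^* \in X_{\mathcal{K}'}$ with $|\{\mathbf{K}^* \cdot f : f \in H_m\}| \leq r_m$. Pick $\eta \in \widehat{G}$ with $\mathbf{F}' \cdot \eta = \mathbf{K}^*$ by the Extension Property. A short direct computation shows that for each $f \in H_m$,
\[
\gamma_m \cdot \eta(f) \;=\; \gamma_m(\eta \circ f) \;=\; \mathbf{F}' \cdot (\eta \circ f) \;=\; (\mathbf{F}' \cdot \eta) \cdot f \;=\; \mathbf{K}^* \cdot f,
\]
so the image of $\gamma_m \cdot \eta$ has cardinality at most $r_m$, hence strictly less than $R_m$.

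The contradiction then comes from Lemma \ref{StayUnavoidable}: since $\gamma_m$ is an unavoidable $R_m$-coloring, so is $\gamma_m \cdot \eta$, forcing $|\mathrm{Im}(\gamma_m \cdot \eta)| = R_m$. There is essentially no hard step here; the only thing to verify carefully is the compatibility $(\mathbf{F}' \cdot \eta) \cdot f = \mathbf{F}' \cdot (\eta \circ f)$ between the right action on the space of structures and composition of embeddings, which follows immediately from the definition of $\mathbf{B}\cdot f$ and the fact that the composition of embeddings is an embedding. So the only genuine content of the argument is extracting the EP of $\mathbf{F}'$ from Roelcke precompactness via the results of this subsection.
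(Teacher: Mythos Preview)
Your proof is correct and follows essentially the same approach as the paper's. The paper's proof is terser—it simply notes that $\mathbf{K}^*$ with $|\{\mathbf{K}^*\cdot f: f\in H_m\}|\leq r_m$ is not a big Ramsey structure, invokes the EP to write $\mathbf{K}^* = \mathbf{F}'\cdot\eta$, and concludes—but the implicit justification for that last step is exactly your computation together with Lemma~\ref{StayUnavoidable}, so the content is the same.
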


\begin{proof}
As $r_m$ is the small Ramsey degree of $\mathbf{A}_m$, there is some $\mathbf{K}^*\in X_{\mathcal{K}'}$ with $|\{\mathbf{K}^*\cdot f: f\in H_m\}| \leq r_m < R_m$. In particular, such a $\mathbf{K}^*$ is not a big Ramsey structure. Since $\mathbf{K}^* = \mathbf{F}'\cdot \eta$ for some $\eta\in \widehat{G}$, it follows that $\mathbf{F}'$ cannot be a big Ramsey structure.
\end{proof}
\vspace{3 mm}

\begin{rem}
If $r_m = R_m$ for every $m< \omega$, then we can find for every $m< \omega$ a \emph{syndetic} $r_m$-coloring $\gamma_m$ of $H_m$ with $\gamma_m\ll \gamma_n$ for every $m\leq n< \omega$. Such a sequence of colorings can then be used to construct the universal minimal flow $M(G)$ of $G$ (see section 8 of \cite{Z}). As syndetic colorings are unavoidable, it follows that $X_{\mathcal{K}'}$ is just $M(G)$; as every orbit is dense, every point is a completion point, so $X_{\mathcal{K}'}$ is a big Ramsey flow. A good example to keep in mind is when $\mathbf{K} = \bigcup_n \mathbf{A}_n$ is a countable set $K$ with no structure, and each $\mathbf{A}_m$ is a set of size $m$ with no structure. Then $r_m = R_m = m!$, $\mathcal{K}'$ is the class of finite linear orders, and $X_{\mathcal{K}'}$ is the space of linear orders on $K$. 
\end{rem}

\section{Connections and questions}
\label{QuestionSection}

This section discusses the connections between completion flows and the notion of \emph{oscillation stability} from \cite{KPT}. It also gathers a list of open questions.

In this section, the only uniform structure we will consider on a topological group $G$ is the left uniformity, and any references to uniform continuity, Cauchy, etc.\ should be interpreted as such. If $f: G\rightarrow [0,1]$ is uniformly continuous, then $f$ continuously extends to the left completion $\widehat{G}$, and we will also use $f$ to denote this extension.
\vspace{3 mm}

\begin{defin}\mbox{}
\begin{enumerate}
\item
Let $G$ be a Hausdorff topological group with left completion $\widehat{G}$. A left-uniformly continuous function $f: G\rightarrow [0,1]$ is called \emph{oscillation stable} if for every $\eta\in \widehat{G}$ and $\epsilon > 0$, there is $\zeta\in \widehat{G}$ so that $\sup(|f\circ \eta\circ \zeta(g) - f\circ \eta\circ \zeta(h)|: g, h\in G)< \epsilon$ 

\item
The topological group $G$ is called \emph{oscillation stable} if every left-uniformly continuous $f: G\rightarrow [0,1]$ is oscillation stable.
\end{enumerate}
\end{defin}
\vspace{3 mm}

Suppose $(X, x_0)$ is a non-trivial completion ambit, and let $f: X\rightarrow [0,1]$ be any continuous non-constant function. Then the function $f(x_0-): G\rightarrow [0,1]$ cannot be oscillation stable. However, given a continuous function $f: G\rightarrow [0,1]$ which is not oscillation stable, it is not clear that $f$ can be described in this fashion. 
Recall that bounded uniformly continuous functions are precisely those functions which extend continuously to $S(G)$.
\vspace{3 mm}

\begin{prop}
\label{CompleteFunction}
Let $G$ be a topological group. The following are equivalent.
\begin{enumerate}
\item
$G$ admits a non-trivial completion ambit.
\item
There is a non-constant, uniformly continuous $f: G\rightarrow [0,1]$ so that for any $\eta\in \widehat{G}$, there are $g_i\in G$  so that $f\circ \eta \circ g_i \rightarrow f$ pointwise.
\end{enumerate}
\end{prop}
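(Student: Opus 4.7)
The plan is to translate between non-trivial completion ambits and functions with property (2) by passing through the Samuel compactification $S(G)$ and its universal property for left uniformly continuous functions.

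For (1) $\Rightarrow$ (2): given a non-trivial completion ambit $(X, x_0)$, use Urysohn to pick a non-constant continuous $F: X \to [0,1]$, and set $f(g) := F(x_0 \cdot g)$. Left uniform continuity of $f$ comes from the fact that $F \circ \phi_{x_0}: S(G) \to [0,1]$ is continuous and $f$ is its restriction to $G \subseteq S(G)$. Non-constancy of $f$ follows from non-constancy of $F$ together with density of $x_0 G$ in $X$. Finally, for any $\eta \in \widehat{G}$, density of the orbit of $x_0 \cdot \eta$ yields a net $g_i \in G$ with $x_0 \cdot \eta g_i \to x_0$; continuity of each right-multiplication map $\rho_h$ on $X$ then gives $x_0 \eta g_i h \to x_0 h$, whence $f(\eta g_i h) = F(x_0 \eta g_i h) \to F(x_0 h) = f(h)$ for every $h \in G$.

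For (2) $\Rightarrow$ (1): extend $f$ to $\tilde f \in C(S(G))$ and define $\phi: S(G) \to [0,1]^G$ (product topology) by $\phi(p)(h) := \tilde f(ph)$. This map is continuous and $G$-equivariant when $[0,1]^G$ is given the right action $(Fg)(h) := F(gh)$, so $X := \phi(S(G)) = \overline{fG}$ is a compact $G$-invariant subset of $[0,1]^G$. The key technical step is joint continuity of $X \times G \to X$; this reduces to showing that the family $X$ is equicontinuous, which holds because for each $p \in S(G)$ the function $h \mapsto \tilde f(ph)$ inherits the left-uniform modulus of continuity of $f$ (approximate $p$ by a net $h_i \in G$ and observe $(h_i g_1)^{-1}(h_i g_2) = g_1^{-1} g_2$, then pass to the limit). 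Granting this, $(X, f)$ is a $G$-ambit, and it is non-trivial because $f$ is non-constant.

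To see that $f$ is a completion point, fix $\eta \in \widehat{G}$. The extended action gives $f \cdot \eta = \phi(\eta)$, the function $h \mapsto \tilde f(\eta h)$. Property (2) delivers a net $g_i \in G$ with $f(\eta g_i h) \to f(h)$ for each $h$, which is precisely convergence $(f\eta) \cdot g_i \to f$ in the product topology on $X$. Thus $f \in \overline{(f\eta) \cdot G}$, and since orbit closures are $G$-invariant, $fG \subseteq \overline{(f\eta)G}$, so $X = \overline{fG} \subseteq \overline{(f\eta)G}$, giving $f\eta$ a dense orbit. The main anticipated obstacle is establishing joint continuity of the $G$-action on the orbit closure $X \subseteq [0,1]^G$; the equicontinuity argument above is what makes this go through, and the remainder is routine bookkeeping with the Samuel compactification.
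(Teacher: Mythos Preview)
Your proof is correct and follows essentially the same approach as the paper: in both directions you and the paper pull back a continuous function along the orbit map for $(1)\Rightarrow(2)$, and take the orbit closure of $f$ in $[0,1]^G$ under the right shift for $(2)\Rightarrow(1)$. The only real difference is that you supply the equicontinuity argument for joint continuity of the action on $X\subseteq[0,1]^G$, which the paper leaves implicit; this is a genuine detail worth making explicit, and your argument for it is fine.
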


\begin{rem}
The pointwise convergence in (2) is only for the functions with domain $G$. In general, we cannot get pointwise convergence on all of $S(G)$.
\end{rem}

\begin{proof}
Suppose $(X, x_0)$ is a non-trivial completion ambit. Let $\phi: X\rightarrow [0,1]$ be continuous, and define $f: G\rightarrow [0,1]$ via $f(g) = \phi(x_0g)$. If $\eta\in \widehat{G}$, find $g_i\in G$ with $x_0\eta g_i \to x_0$. Then $f\circ \eta\circ g_i(g) = f(\eta\circ g_i\circ g) = \phi(x_0\eta g_i g) = \phi(x_0 g) = f(g)$.

Suppose $f: G\rightarrow [0,1]$ satisfies (2). View $f$ as a member of $[0,1]^G$ equipped with the product topology. $G$ acts on this space by right shift. Let $X\subseteq [0,1]^G$ denote the orbit closure of $f$, and notice that every member of $X$ is uniformly continuous. To be extra careful, let us write $(f\cdot p)_X$ to denote various dynamical computations as carried out in $X$. Notice that for any $\eta\in \widehat{G}$, we have
\begin{align*}
(f\cdot \eta)_X(g) := \lim_{g_i\to \eta} f\cdot g_i(g) = \lim_{g_i\to \eta} f(g_ig) = f(\eta g)
\end{align*}
Find $g_i\in G$ with $f\circ \eta \circ g_i\rightarrow f$ pointwise. But $f\circ \eta\circ g_i = (f\cdot \eta\cdot g_i)_X$, so $f$ is a completion point of $X$.
\end{proof}
\vspace{3 mm}

If $G = \aut{\mathbf{K}} = \bigcup_n \mathbf{A}_n$, then any uniformly continuous function $f: G\rightarrow [0,1]$ can be uniformly approximated by functions of the form $f'\circ \pi_m$ for some $f': H_m\rightarrow [0,1]$. It follows that $G$ is oscillation stable iff every $\mathbf{A}_n$ has big Ramsey degree $1$, which cannot happen.

It is unknown whether any non-trivial oscillation stable topological groups exist. Hjorth has shown \cite{Hj} that no Polish group can be oscillation stable; a simpler proof of this result is given by Melleray \cite{Mel}. As indicated in the introduction, let us propose the following weakening of oscillation stability.
\vspace{3 mm}

\begin{defin}
Let $G$ be a Hausdorff topological group. We call $G$ \emph{completely amenable} if $G$ admits no non-trivial completion flows.
\end{defin}
\vspace{3 mm}

Every oscillation stable group is completely amenable, and every completely amenable group is extremely amenable. This brings us to our first question.
\vspace{3 mm}

\begin{que}
\label{CompleteQue}
Are there non-trivial topological groups which are completely amenable? Are any non-trivial Polish groups completely amenable? Are any non-trivial groups $G = \aut{\mathbf{K}}$ completely amenable?
\end{que}
\vspace{3 mm}

If a topological group $G$ does admit a non-trivial completion flow, then we can ask about the structure of the collection of completion flows and surjective $G$-maps. We have seen that some groups admit non-trivial universal completion flows which are unique up to isomorphism.
\vspace{3 mm}

\begin{que}
\label{UCFQue}
Let $G$ be a topological group. Does $G$ admit a universal completion flow? If $G$ does admit a universal completion flow, is it unique?
\end{que}
\vspace{3 mm}

In the case that $G = \aut{\mathbf{K}}$, where $\mathbf{K} = \bigcup_n \mathbf{A}_n$ where each $\mathbf{A}_n$ has finite big Ramsey degree, we constructed in section 6 the big Ramsey expansion class $\mathcal{K}'$. If $G$ is also \emph{Roelcke precompact}, we were able to show that $\mathcal{K}'$ has the JEP and the AP.
\vspace{3 mm}

\begin{que}
\label{RoelckeQue}
Let $G = \aut{\mathbf{K}}$, where $\mathbf{K} = \bigcup_n \mathbf{A}_n = \flim{\mathcal{K}}$ and each $\mathbf{A}_n$ has finite big Ramsey degree. Let $\mathcal{K}'$ be the big Ramsey expansion class of $\mathcal{K}$. Is there a ``dynamical'' characterization of $X_{\mathcal{K}'}$? Is $X_{\mathcal{K}'}$ a big Ramsey flow? Is this true if $G$ is Roelcke precompact?
\end{que}
\vspace{3 mm}

\begin{que}
\label{RamseyMetrQue}
Let $G = \aut{\mathbf{K}}$, where $\mathbf{K} = \flim{\mathcal{K}}$. Assume some $\mathbf{A}\in \mathcal{K}$ does not have finite big Ramsey degree. Then does $G$ admit a non-metrizable completion flow?
\end{que}
\vspace{3 mm}

We now turn to more specific questions. First suppose that $G$ and $H$ are topological groups and $\phi: G\rightarrow H$ is a continuous homomorphism with dense image. Then $\phi$ is uniformly continuous when both $G$ and $H$ are given their left uniform structures, so we may extend $\phi$ to a map from $\widehat{G}$ to $\widehat{H}$, which we also denote by $\phi$. Now suppose that $(X, x_0)$ is an $H$-completion-ambit. We may regard $X$ as a $G$-flow by setting $x\cdot g = x\cdot \phi(g)$. Since $\phi$ has dense image, $x_0$ still has dense orbit, and since $\phi$ maps $\widehat{G}$ to $\widehat{H}$, $(X,x_0)$ is also a $G$-completion-ambit. 

We have seen that if $G = \aut{\mathbb{Q}}$, then $G$ admits a unique universal completion flow, which furthermore is metrizable. Let $H = \mathrm{Homeo}^+([0,1])$ be the group of orientation-preserving homeomorphisms of the unit interval; we endow $H$ with the compact-open topology. A compatible left-invariant metric is given as follows. If $h_0, h_1\in H$, set $d(h_0, h_1) = \max(|h_0^{-1}(x)- h_1^{-1}(x)|: x\in [0,1])$. Fix an order-preserving injection $f: \mathbb{Q}\rightarrow [0,1]$ with dense image, and use $f$ to obtain a continuous homomorphism $\phi: G\rightarrow H$ with dense image. It follows that every $H$-completion-flow is metrizable. The following question in some sense asks if a ``continuous'' analogue of Devlin's theorem holds. 
\vspace{3 mm}

\begin{que}
\label{HomeoQue}
Let $H = \mathrm{Homeo}^+([0,1])$ be the group of orientation preserving homeomorphisms of the unit interval with the compact-open topology. Is $H$ completely amenable? Does $H$ admit a unique universal completion flow?
\end{que}
\vspace{3 mm}

The final question investigates the possibility of ``iterated'' Ramsey degrees in the following sense. Consider $G = \aut{\mathbb{Q}}$, and let $\mathbf{Q}'$ be a big Ramsey structure for $\mathbb{Q}$. Set $\mathcal{K}' = \age{\mathbf{Q}'/\mathcal{K}}$. We saw in section 6 that $\mathcal{K}'$ is a Fra\"iss\'e--HP class, so has a \fr limit $\mathbf{F}'\in X_{\mathcal{K}'}$.

\begin{que}
\label{IteratedQue}
With notation as in the previous paragraph, does $\mathbf{F}'$ admit a big Ramsey structure?
\end{que}

\appendix
\section{Proof of Theorem \ref{LevelsBR}}
\label{ProofAppendix}

We restate Theorem \ref{LevelsBR} below.

\begin{theorem*}
Let $\mathbf{K} = \bigcup_n \mathbf{A}_n$ be a \fr structure, and suppose each $\mathbf{A}_m$ has finite big Ramsey degree $R_m < \omega$. Assume that for each $m <\omega$, there is an unavoidable $R_m$-coloring $\gamma_m$ of $H_m$ so that $\gamma_m \ll \gamma_n$ for each $m\leq n < \omega$. Then $\mathbf{K}$ admits a big Ramsey structure.
\end{theorem*}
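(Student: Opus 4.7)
The plan is to leverage the chain of coherent unavoidable colorings $\gamma_m \ll \gamma_n$ to first build the big Ramsey diagram of $\mathcal{K}$, and then extract from it a concrete $L'$-structure on $K$ which serves as a big Ramsey structure. The key new input beyond the construction of the big Ramsey expansion class in Section \ref{ExpansionSection} is that the colorings are coherent for \emph{every} pair $m \leq n < \omega$ simultaneously, so we can hope for a single structure $\mathbf{K}'$ rather than merely a class.

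First I would form the $\omega$-diagram $D$ based on $\{R_n : n < \omega\}$ by setting $D(m,n)(q, f)$ to be the unique $p \in R_m$ such that $\gamma_m(s \circ f) = p$ for every $s \in H_n$ with $\gamma_n(s) = q$; this is well-defined precisely because $\gamma_m \ll \gamma_n$. The surjectivity of $D(m,n)(-, f)$ in its first argument follows from the unavoidability of $\gamma_m$ combined with Proposition \ref{Amalgamation}(2), and the cocycle condition (3b) of Definition \ref{DiagramsDef} is immediate from the definition. Proposition \ref{DiagramExpansion} then produces a reasonable precompact expansion $\mathcal{K}'$ of $\mathcal{K}$ in some language $L' \supseteq L$ with $D \cong D_{\mathcal{K}'}$, together with concrete $L'$-structures $\mathbf{A}_n^q$ for $n < \omega$ and $q \in R_n$ satisfying $\mathbf{A}_n^q \cdot f = \mathbf{A}_m^{D(m,n)(q,f)}$ for every $f \in H_m^n$.

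Next I would define $\mathbf{K}' := \bigcup_n \mathbf{A}_n^{\gamma_n(i_n)}$, where the critical compatibility check is that $\mathbf{A}_n^{\gamma_n(i_n)}$ restricts to $\mathbf{A}_m^{\gamma_m(i_m)}$ on $A_m$ for every $m \leq n$. This holds by the chain
\[
\mathbf{A}_n^{\gamma_n(i_n)} \cdot i_m^n = \mathbf{A}_m^{D(m,n)(\gamma_n(i_n), i_m^n)} = \mathbf{A}_m^{\gamma_m(i_n \circ i_m^n)} = \mathbf{A}_m^{\gamma_m(i_m)}.
\]
With $\mathbf{K}'$ in hand, one verifies the four conditions of Definition \ref{BRStructure}. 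Condition (1) holds by construction. For condition (4), a parallel computation gives $\mathbf{K}' \cdot f = \mathbf{A}_m^{\gamma_m(f)}$ for every $f \in H_m$, so the coloring $f \mapsto \mathbf{K}' \cdot f$ agrees with $\gamma_m$ up to the canonical bijection $q \leftrightarrow \mathbf{A}_m^q$ and is therefore an unavoidable $R_m$-coloring. Condition (3) follows since the big Ramsey degree of $\mathbf{A}_m$ is assumed to be $R_m$, and this coloring realizes that lower bound. Condition (2) reduces to showing $|\mathbf{K}'(\mathbf{A}_m)| = R_m$, which is immediate from surjectivity of $\gamma_m$ together with the identification $\age{\mathbf{K}'/\mathcal{K}} = \mathcal{K}'$.

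The conceptual picture is clean, but I expect the main obstacle to be hidden in the bookkeeping inside Proposition \ref{DiagramExpansion} itself, particularly in verifying that the explicitly-defined $L'$-structures $\mathbf{A}_n^q$ interact correctly with \emph{all} embeddings $f \in H_m^n$ and not merely those arising from a fixed diagram. Namely, one must confirm that the relation symbols $S_n^q$ introduced in the proof of Proposition \ref{DiagramExpansion} really do pick out exactly the embeddings they are meant to, so that $\mathbf{A}_n^q \cdot f = \mathbf{A}_m^{D(m,n)(q,f)}$ across all relevant $m,n,f$; this is the kind of combinatorial coherence that the cocycle condition (3b) is designed to enforce. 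This is presumably why the full proof is relegated to the appendix.
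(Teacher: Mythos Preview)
Your construction of $\mathbf{K}'$ via the $\omega$-diagram and Proposition \ref{DiagramExpansion} is clean and correct as far as it goes, and your verification of conditions (1)--(4) of Definition \ref{BRStructure} for the structures $\mathbf{A}_m$ in the exhaustion is fine. However, there is a genuine gap: Definition \ref{BRStructure} requires conditions (2)--(4) to hold for \emph{every} $\mathbf{A}\in\mathcal{K}=\age{\mathbf{K}}$, not merely for the $\mathbf{A}_m$ appearing in the chosen exhaustion. The appendix opens by explicitly returning to the convention $\mathcal{K}=\age{\mathbf{K}}$ precisely because Theorem \ref{LevelsBR} is what \emph{justifies} the later non-hereditary viewpoint; one cannot invoke that viewpoint to prove it. For a general $\mathbf{B}\subseteq\mathbf{K}$ not isomorphic to any $\mathbf{A}_n$, your $\mathbf{K}'$ still induces a coloring $h\mapsto \mathbf{K}'\cdot h$ of $H_\mathbf{B}$, but nothing you have written shows this coloring is unavoidable or that it has exactly $R_\mathbf{B}$ many classes. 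This is not automatic: the relations $S_n^q$ from Proposition \ref{DiagramExpansion} only record $\gamma_m$-data along embeddings of the specific $\mathbf{A}_m$'s into $\mathbf{B}$, and there is no a priori reason this data should collapse to $R_\mathbf{B}$ values.

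This missing step is exactly the ``surprisingly involved'' part the paper warns about. The paper's proof handles it by building, for each $\mathbf{B}\subseteq\mathbf{K}$, a coloring $\gamma_\mathbf{B}$ of $H_\mathbf{B}$ as follows: fix $n$ with $\mathbf{B}\subseteq\mathbf{A}_n$, set $S_f=\{\gamma_n(s):s\in H_n,\ s\circ i_\mathbf{B}=f\}$ for $f\in H_\mathbf{B}$, form the graph $\Gamma$ on $H_\mathbf{B}$ with $(f,h)\in\Gamma$ iff $S_f\cap S_h\neq\emptyset$, and let $\gamma_\mathbf{B}(f)$ be the connected component of $f$. Unavoidability of each component is checked directly; the delicate point is that $\gamma_\mathbf{B}$ has exactly $R_\mathbf{B}$ classes, which uses Lemma \ref{ColorCompletion} (the completion-point behavior of the coherent system $\{\gamma_n\}$) together with Proposition \ref{RefineUnavoidable} to compare $\gamma_\mathbf{B}$ against an arbitrary unavoidable $R_\mathbf{B}$-coloring $\delta$. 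Only after this is done for every $\mathbf{B}$ does the paper assemble $\mathbf{K}'$, now using relation symbols $S(\mathbf{B},j)$ indexed by all $\mathbf{B}\subseteq\mathbf{K}$ rather than just the $\mathbf{A}_n$'s. Your proposal recovers the easy half of the argument; the connected-component extension to arbitrary $\mathbf{B}$ is the substance that is missing.
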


We return to our convention that $\mathcal{K} = \age{\mathbf{K}}$; indeed, it was this proposition which justified the ``Fra\"iss\'e--HP'' perspective we took in later sections. In particular, we will need to deal with finite structures $\mathbf{B}\subseteq \mathbf{K}$ not equal to some $\mathbf{A}_n$. Many of the definitions and theorems from section \ref{SetsAndColorings} generalize to deal with any finite structures in $\mathcal{K}$, and we will freely use the ``extended'' versions of these theorems.

We will need the following easy lemma, which is very similar to Proposition \ref{BRCompletion}
\vspace{3 mm}

\begin{lemma}
\label{ColorCompletion}
With $\gamma_n$ as in the statement of Theorem \ref{LevelsBR}, then if $\eta\in \widehat{G}$, there is $p\in S(G)$ with $\gamma_n\cdot \eta\cdot p = \gamma_n$.
\end{lemma}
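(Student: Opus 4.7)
The plan is to imitate the proof of Proposition \ref{BRCompletion}: for each $N<\omega$ I will produce $g_N\in G$ satisfying a single ``level-$N$'' condition, and then take $p\in S(G)$ to be a cluster point of the sequence $(g_N)_{N<\omega}$.

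First, fix $N<\omega$. I would like to find $g_N\in G$ with $\gamma_N(\eta\circ g_N|_N)=\gamma_N(i_N)$. Since $\gamma_N$ is an unavoidable $R_N$-coloring of $H_N$, the color class $C:=\gamma_N^{-1}(\{\gamma_N(i_N)\})$ is a non-empty unavoidable subset of $H_N$. Applying the definition of unavoidability to $\eta\in\widehat{G}$, there is $h\in H_N$ with $\eta\circ h\in C$. By ultrahomogeneity of $\mathbf{K}$, extend $h$ to an element $g_N\in G$ with $g_N\circ i_N = h$; this is the desired $g_N$.

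The key observation is that this single condition at level $N$ propagates to all lower levels via strong refinement. For any $n\leq N$ and any $f\in H_n^N$, the assumption $\gamma_n\ll \gamma_N$ says exactly that $\gamma_n(s_1\circ f)=\gamma_n(s_2\circ f)$ whenever $s_1,s_2\in H_N$ satisfy $\gamma_N(s_1)=\gamma_N(s_2)$. Applying this with $s_1=\eta\circ g_N|_N$ and $s_2=i_N$, whose $\gamma_N$-values agree by construction, yields
\[
(\gamma_n\cdot \eta\cdot g_N)(f) \;=\; \gamma_n(\eta\circ g_N\circ f)\;=\;\gamma_n(i_N\circ f)\;=\;\gamma_n(f).
\]

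Finally, let $p\in S(G)$ be a cluster point of $(g_N)$, which exists by compactness of $S(G)$. Given any $n<\omega$ and any $f\in H_n$, we have $f\in H_n^N$ for all sufficiently large $N$, so the preceding computation shows $(\gamma_n\cdot \eta\cdot g_N)(f)$ is eventually equal to $\gamma_n(f)$ along any subnet with indices going to infinity. Since $\lambda_\eta:S(G)\to S(G)$ is continuous, and $\alpha\mapsto (\gamma_n\cdot \alpha)(f)$ is continuous into the finite discrete space $R_n$, passing to the cluster point gives $(\gamma_n\cdot \eta\cdot p)(f)=\gamma_n(f)$ for every $n$ and every $f\in H_n$, i.e., $\gamma_n\cdot \eta\cdot p=\gamma_n$ as required. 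No serious obstacle arises; the entire content is that a single level-$N$ equality propagates downward through the strong refinement tower, exactly in the spirit of Proposition \ref{BRCompletion}.
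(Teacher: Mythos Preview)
Your proposal is correct and follows essentially the same approach as the paper's proof: for each $N$ you find $g_N\in G$ with $(\gamma_N\cdot\eta\cdot g_N)(i_N)=\gamma_N(i_N)$, use $\gamma_n\ll\gamma_N$ to push this down to all $f\in H_n^N$, and then pass to a cluster point. The paper's proof is terser (three sentences) and leaves the existence of $g_N$ and the continuity argument implicit, while you have spelled out both the use of unavoidability to produce $g_N$ and the passage to the cluster point; the underlying argument is identical.
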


\begin{proof}
For each $N\geq n$, find $g_N\in G$ so that $\gamma_N\cdot \eta\cdot g_N$ and $\gamma_N$ agree on $i_N$. Let $p\in S(G)$ be a cluster point of the $g_N$. Since $\gamma_n \ll \gamma_N$ for every $n\leq N < \omega$, we have $\gamma_n\cdot \eta\cdot p = \gamma_n$ as desired.
\end{proof}
\vspace{3 mm}

\begin{proof}[Proof of Proposition \ref{LevelsBR}]
First notice that each $\mathbf{B}\in \mathcal{K}$ has finite big Ramsey degree, as for some $n< \omega$, we have $\mathbf{B}\leq \mathbf{A}_n$. Let $R_\mathbf{B} < \omega$ be the big Ramsey degree of $\mathbf{B}\in \mathcal{K}$.

We produce for every $\mathbf{B}\in \mathcal{K}$ with $\mathbf{B}\subseteq \mathbf{K}$ a coloring $\gamma_\mathbf{B}$ of $H_\mathbf{B}:= \emb{\mathbf{B}, \mathbf{K}}$ so that the following items hold.
\begin{enumerate}
\item
Each $\gamma_\mathbf{B}$ is an unavoidable $R_\mathbf{B}$-coloring.

\item
If $\mathbf{B}\leq \mathbf{C}\in \mathcal{K}$ and $f\in \emb{\mathbf{B}, \mathbf{C}}$, then $\gamma_\mathbf{C}$ refines $\gamma_\mathbf{B}\circ \hatf$ (i.e.\ $\gamma_\mathbf{B}\ll \gamma_\mathbf{C}$).
\end{enumerate}

Let us show how to complete the proof given these colorings. Suppose $\mathbf{K}$ is an $L$-stucture. We produce a new language $L'\supseteq L$; for each $\mathbf{B}\in \mathcal{K}$ with $\mathbf{B}\subseteq \mathbf{K}$, we introduce new relational symbols $\{S(\mathbf{B}, 0),...,S(\mathbf{B}, R_\mathbf{B}-1)\}$ of arity $|B|$. We now construct an $L'$-structure $\mathbf{K}'$ on the underlying set $K$. If $R\in L$, we set $R^{\mathbf{K}'} = R^\mathbf{K}$. To interpret the new relational symbols, first fix for each $\mathbf{B}\in \mathcal{K}$ with $\mathbf{B}\subseteq \mathbf{K}$ an enumeration of the underlying set $B = \{b_0,...,b_{k-1}\}$, where $k = |B|$. Then if $a_0,...,a_{k-1}$ is a $k$-tuple from $K$ and $j< R_\mathbf{B}$, we set $S(\mathbf{B}, j)^{\mathbf{K}'}(a_0,...,a_{|B|-1})$ iff there is $f\in H_\mathbf{B}$ with $f(b_i) = a_i$ for each $i< k$ and $\gamma_\mathbf{B}(f) = j$. 

Given $\mathbf{B}\in \mathcal{K}$ and $\mathbf{K}'$ as constructed above, recall that $\mathbf{K}'(\mathbf{B}) = \{\mathbf{K}'\cdot f: f\in H_\mathbf{B}\}$. In order to show that $\mathbf{K}'$ as constructed above is a big Ramsey structure, it is enough to show that every $\mathbf{B}\in \mathcal{K}$ satisfies $|\mathbf{K}'(\mathbf{B})| = R_\mathbf{B}$. We may assume $\mathbf{B}\subseteq \mathbf{K}$; let $B = \{b_0,...,b_{k-1}\}$ be the enumeration used in the construction of $\mathbf{K}'$. Notice that if $\mathbf{B}' \in \mathbf{K}'(\mathbf{B})$, then for some unique $j< R_\mathbf{B}$, we have $S(\mathbf{B}, j)^{\mathbf{K}'}(b_0,...,b_{k-1})$. Call such a $\mathbf{B}'$ an \emph{expansion of type $j$}. Now suppose $\mathbf{B}', \mathbf{B}^*\in \mathbf{K}'(\mathbf{B})$ are both expansions of type $j$. We will show that $\mathbf{B}' = \mathbf{B}^*$. Fix $f', f^*\in H_\mathbf{B}$ so that $\mathbf{B}' = \mathbf{K}'\cdot f'$ and $\mathbf{B}^* = \mathbf{K}'\cdot f^*$. It is enough to show that for any $\mathbf{A}\in \mathcal{K}$ with $\mathbf{A}\subseteq \mathbf{K}$, with $A = \{a_0,...,a_{r-1}\}$, any $r$-tuple $c_0,...,c_{r-1}\in B$, and any $\ell< R_\mathbf{A}$ that $S(\mathbf{A}, \ell)^{\mathbf{B}'}(c_0,...,c_{r-1})$ holds iff $S(\mathbf{A}, \ell)^{\mathbf{B}^*}(c_0,...,c_{r-1})$ holds. 

By symmetry, it is enough to show one implication, so suppose $S(\mathbf{A}, \ell)^{\mathbf{B}'}(c_0,...,c_{r-1})$ holds. By definition, this means that $S(\mathbf{A}, \ell)^{\mathbf{K}'}(f'(c_0),...,f'(c_{r-1}))$ holds. In particular, the map $h: A\rightarrow K$ given by $h(a_i) = f'(c_i)$ for $i< r$ is an embedding of $\mathbf{A}$ into $\mathbf{K}$, so the map $s: A\rightarrow B$ given by $s(a_i) = c_i$ is an embedding of $\mathbf{A}$ into $\mathbf{B}$. Notice that $f'\circ s = h$. By item (2), we must have that $\gamma_\mathbf{B}$ refines $\gamma_\mathbf{A}\circ \hat{s}$. Since $\gamma_\mathbf{B}(f^*) = \gamma_\mathbf{B}(f') = j$, we must have $\gamma_\mathbf{A}(f^*\circ s) = \gamma_\mathbf{A}(f'\circ s) = \gamma_\mathbf{A}(h) = \ell$. It follows that $S(\mathbf{A}, \ell)^{\mathbf{K}'}(f^*(c_0),...,f^*(c_{r-1}))$ holds, so also $S(\mathbf{A}, j)^{\mathbf{B}^*}(c_0,...,c_{r-1})$ holds.
\vspace{3 mm}

We now proceed to construct the colorings $\gamma_\mathbf{B}$ satisfying items (1) and (2) above. Fix $\mathbf{B}\in \mathcal{K}$ with $\mathbf{B}\subseteq \mathbf{K}$, and find $n< \omega$ large enough so that $\mathbf{B}\subseteq \mathbf{A}_n$. Let $i_\mathbf{B}: \mathbf{B}\rightarrow \mathbf{A}_n$ denote the inclusion embedding. For each $f\in H_\mathbf{B}$, let $S_f = \{j < R_n: \exists s\in H_n (s\circ i_\mathbf{B} = f \text{ and } \gamma_n(s) = j)\}$. We define a reflexive graph $\Gamma$ on $H_\mathbf{B}$ by declaring $(f, h)\in \Gamma$ iff $S_f\cap S_h\neq \emptyset$. Define a coloring $\gamma_\mathbf{B}$ on $H_\mathbf{B}$ by sending $f\in H_\mathbf{B}$ to the connected component of $f$ in $\Gamma$.

First let us argue that $\gamma_\mathbf{B}$ is an unavoidable coloring. Fix a connected component $X\subseteq H_\mathbf{B}$ of $\Gamma$. We can write $X = \{f\in H_\mathbf{B}: S_f\subseteq S\}$ for some $S\subseteq R_n$, namely $S = \bigcup_{f\in X} S_f$. But then we also have $X = \{f\in H_\mathbf{B}: S_f\cap S\neq \emptyset\}$. Now fix $\eta\in \widehat{G}$ towards showing that $\eta^{-1}(X)\neq \emptyset$. Pick $j\in S$, and find $s\in H_n$ with $\gamma_n(\eta\circ s) = j$. Then $\eta\circ s\circ i_\mathbf{B}\in X$, so $\eta^{-1}(X)\neq\emptyset$.

To see that $\gamma_\mathbf{B}$ is an unavoidable $R_\mathbf{B}$-coloring, let $\delta: H_\mathbf{B}\rightarrow R_\mathbf{B}$ be unavoidable. Find $\eta\in \widehat{G}$ with $\gamma_\mathbf{B}\cdot \eta \leq \delta\cdot \eta \ll \gamma_n\cdot \eta$. Using Lemma \ref{ColorCompletion}, find $p\in S(G)$ with $\gamma_n\cdot \eta \cdot p = \gamma_n$. By Lemma \ref{RefineUnavoidable}, we have $\delta\cdot \eta \cdot p$ an unavoidable $R_\mathbf{B}$-coloring with $\delta \cdot \eta\cdot p \ll \gamma_n$. We will show that $\gamma_\mathbf{B} \sim \delta\cdot \eta\cdot p$. We must have $\gamma_\mathbf{B} \geq \delta\cdot \eta\cdot p$; by construction, $\gamma_{\mathbf{B}}$ is the finest possible coloring with $\gamma_\mathbf{B} \ll \gamma_n$. But since $\gamma_B$ is an unavoidable coloring and $\delta\cdot \eta\cdot p$ is an unavoidable $R_\mathbf{B}$-coloring, we must have $\gamma_\mathbf{B}\sim \delta\cdot\eta \cdot p$. In particular, $\gamma_\mathbf{B}$ is an unavoidable $R_\mathbf{B}$-coloring as desired.
\end{proof}
\vspace{3 mm}

Andy Zucker

Carnegie Mellon University

Pittsburgh, PA 15213

andrewz@andrew.cmu.edu

\end{document}